\documentclass[11pt]{article}
\usepackage[T1]{fontenc}
\usepackage[margin=0.9in]{geometry}
\usepackage{amsmath,amssymb,amsthm,mathtools,microtype}
\usepackage[title,titletoc]{appendix}
\usepackage{graphicx,tikz}
\usetikzlibrary{arrows.meta,calc,positioning}
\usepackage[authoryear,round]{natbib}
\usepackage[linktocpage,colorlinks,linkcolor=blue,anchorcolor=blue,citecolor=blue,urlcolor=blue,pagebackref]{hyperref}
\hypersetup{pdftitle={Heavy-Ball Method under Randomized Schedules},pdfauthor={Chang He and Shuzhong Zhang},pdfsubject={Deterministic and randomized time boundaries; fixed-time and anytime convergence}}
\allowdisplaybreaks[2]
\newtheorem{theorem}{Theorem}
\newtheorem{corollary}[theorem]{Corollary}
\newtheorem{lemma}[theorem]{Lemma}

\newcommand{\R}{\mathbb{R}}
\newcommand{\E}{\mathbb{E}}

\newcommand{\inner}[2]{\langle#1,#2\rangle}

\newcommand{\cF}{\mathcal{F}}
\newcommand{\dd}{\,\mathrm{d}}
\newcommand{\cO}{\mathcal{O}}
\newcommand{\ee}{\mathrm{e}}
\title{Heavy-Ball Method under Randomized Schedules}
\author{
{Chang He} \thanks{Department of Industrial and Systems Engineering, University of Minnesota. \texttt{hec@umn.edu}}
\and
{Shuzhong Zhang} \thanks{Department of Industrial and Systems Engineering, University of Minnesota. \texttt{zhangs@umn.edu}}
}
\date{}

\begin{document}
\maketitle
\begin{abstract}
We study how predefined randomized parameter schedules accelerate the heavy-ball method on general smooth convex objectives. Our analysis distinguishes two levels of randomization: sampling gradient-evaluation times within intervals whose boundaries are deterministic, and additionally randomizing the time boundaries themselves. With deterministic time boundaries, we construct fixed-time and anytime schedules that achieve an expected last-iterate function value gap of order $\mathcal{O}(1/K^{4/3})$; the anytime schedule also satisfies the same rate almost surely. We then use randomized time boundaries and obtain the improved last-iterate rate $\mathcal{O}(1/K^{3/2})$, both in expectation and almost surely. To the best of our knowledge, this is the first global nonasymptotic convergence guarantee for the heavy-ball method on general smooth convex functions that improves polynomially over the classical $\mathcal{O}(1/K)$ rate. Our result shows that the heavy-ball method achieves a strictly better convergence rate than the best known result $\mathcal{O}(1/K^{\log_2(1+\sqrt{2})})$ attainable by plain gradient descent with silver stepsize schedules.
\end{abstract}

\section{Introduction}\label{sec:introduction}
\subsection{Setup and main results}
Consider the unconstrained convex minimization problem
\[
 f^\star\coloneqq\min_{x\in\R^d} f(x),
\]
where $f\colon\R^d\to\R$ is convex, attains its minimum at $x^\star$, and has an $L$-Lipschitz gradient, with $L>0$. We study the heavy-ball method \citep{polyak1964some}
\begin{equation}\label{eq:hb}
 x^{k+1}=x^k-\eta_k\nabla f(x^k)+\beta_k(x^k-x^{k-1}),
 \  x^{-1}=x^0,\quad\beta_0=0.
\end{equation}
The gradient oracle is exact; only the parameter schedule is randomized. A schedule is \emph{predefined} if it can be generated from the iteration index, the smoothness constant $L$, and an oracle-independent random seed, without using observed function values, gradients, or iterates. A fixed-time schedule additionally depend on a prescribed total iteration count $K$. An anytime schedule is a single infinite sequence, independent of a terminal iteration, for which the stated guarantee holds at every deterministic $K\ge1$. In either setting, $x^K$ is the last iterate after exactly $K$ gradient evaluations. We organize the construction around increasing time boundaries $A_0=1<A_1<\cdots$ and evaluation times $u_k\in[A_k,A_{k+1}]$. There are two distinct sources of randomness. First, the evaluation times inside each interval can be sampled at random when the boundaries are deterministic. Second, the boundaries themselves can be sampled at random before any oracle evaluations.

\paragraph{Deterministic time boundaries.}
For a prescribed total iteration count $K\ge1$, choose the deterministic boundaries $A_k=(1+k/6K^{1/3})^2$ for all $0 \le k \le K$. Independently sample $u_k\sim\operatorname{Unif}(A_k,A_{k+1})$ for $0\le k<K$, and set $u_K=A_K$. Set the coefficients as
\begin{equation}\label{eq:fixed-coefficients}
\begin{aligned}
 \eta_k=\frac{A_{k+1}-A_k}{L}\Big(1-\frac{u_k}{u_{k+1}}\Big), \ 0\le k<K, \ \text{and} \ \beta_k =\frac{u_k^{-1}-u_{k+1}^{-1}}{u_{k-1}^{-1}-u_k^{-1}}, \ 1\le k<K, \ \beta_0 = 0.
\end{aligned}
\end{equation}
The heavy-ball method satisfies
\begin{equation*}
    \mathbb E[f(x^K)-f^\star]\le\frac{36\ee^{1/7}L\|x^0-x^\star\|^2}{K^{4/3}}.
\end{equation*}
We then strengthen the interpolation and potential to obtain an anytime guarantee with the same rate. Choose the deterministic boundaries $A_k=(1+k/12)^{4/3}$ and independently sample $u_k\sim\operatorname{Unif}(A_k,A_{k+1})$ for every $k\ge0$. Set the coefficients as
\begin{equation}\label{eq:convex-coefficients}
 \eta_k=\frac{A_{k+1}-A_k}{2L}\Big(1-\frac{u_k^2}{u_{k+1}^2}\Big), \ k\ge0, \ \text{and} \ \beta_k=\frac{u_k^{-2}-u_{k+1}^{-2}}{u_{k-1}^{-2}-u_k^{-2}}, \ k\ge1, \ \beta_0 = 0.
\end{equation}
Then, for every deterministic $K\ge1$, the heavy-ball method satisfies
\begin{equation*}
 \E[f(x^K)-f^\star]
 \le\frac{84L\|x^0-x^\star\|^2}{K^{4/3}}.
\end{equation*}
Moreover, $f(x^K)-f^\star=\mathcal O(K^{-4/3})$ almost surely. The boundaries in these two constructions are deterministic, but the heavy-ball coefficients are randomized through the sampled evaluation times.

\paragraph{Randomized time boundaries.} The result improves the anytime rate by randomizing the boundaries as well. Independently sample $V_k\sim\operatorname{Unif}[1,2]$ and $U_k\sim\operatorname{Unif}[0,1]$, with all these variables mutually independent. Define
\begin{equation}\label{eq:random-grid}
 A_0=1,\  A_{k+1}=A_k+\frac{A_k^{1/3}}{512}V_k,
 \  k\ge0.
\end{equation}
Thus $A_k$ is random for $k\ge1$, and $A_0$ remains fixed. The boundaries and all coefficients are still predefined, because their construction is independent of the oracle. For every $k\ge0$, define the evaluation time by
\begin{equation}\label{eq:random-sampling}
 u_k=A_k+(A_{k+1}-A_k)U_k.
\end{equation}
Use the same coefficient formula as in the deterministic-boundary anytime construction:
\begin{equation}\label{eq:random-coefficients}
 \eta_k=\frac{A_{k+1}-A_k}{2L}\Big(1-\frac{u_k^2}{u_{k+1}^2}\Big), \ k\ge0, \ \text{and} \ \beta_k=\frac{u_k^{-2}-u_{k+1}^{-2}}{u_{k-1}^{-2}-u_k^{-2}}, \ k\ge1, \ \beta_0 = 0.
\end{equation}
Theorem~\ref{thm:random-anytime} shows that, for every deterministic $K\ge1$, the heavy-ball method satisfies
\begin{equation*}
 \E[f(x^K)-f^\star] \le\frac{131072L\|x^0-x^\star\|^2}{K^{3/2}}.
\end{equation*}
The same infinite schedule also satisfies
\begin{equation*}
 \mathbb P \Big(\sup_{K\ge1}K^{3/2}[f(x^K)-f^\star]<\infty\Big)=1.
\end{equation*}
In particular, $f(x^K)-f^\star=\mathcal O(K^{-3/2})$ almost surely. For all constructions, the coefficient denominators are positive almost surely. We remark that, compared with the conventional choice of momentum coefficients, the momentum coefficients under a randomized schedule need not be smaller than one and can be arbitrarily large.

\subsection{Related literature}

\paragraph{Stepsize schedules for accelerating gradient descent.}
For smooth convex optimization, gradient descent with a suitable constant stepsize achieves the classical $\mathcal{O}(1/K)$ rate \citep{drori2014performance}, whereas Nesterov's accelerated method attains the optimal $\mathcal{O}(1/K^2)$ rate \citep{nesterov1983method,nemirovsky1983problem}. Using performance-estimation techniques, \citet{grimmer2024long} shows that schedules containing steps larger than $2/L$ can improve convergence constants. Polynomial improvements through nonconstant stepsizes were established by \citet{grimmer2023accelerated,altschuler2023silver}, with the silver stepsize schedule achieving $\mathcal{O}(K^{-r})$ for fixed-time convergence, where $r=\log_2(1+\sqrt{2})\approx1.2716$. Subsequent work improves the constants and develops further schedules for fixed-time convergence \citep{grimmer2024accelerated,grimmer2025composing,zhang2026concatenation}. For anytime convergence, \citet{zhang2025anytime} establishes an $\mathcal{O}(K^{-2r/(1+r)})$ convergence rate, with exponent approximately $1.1195$. Complementary work establishes lower bounds on acceleration by deterministic, nonadaptive stepsize schedules \citep{tsai2026lower,ma2026lower,ye2026improved,jung2026stronger}.

\paragraph{Heavy-ball methods for convex optimization.}
The heavy-ball method was introduced by \citet{polyak1964some}, and its acceleration properties are best understood for quadratic objectives. For strongly convex quadratics, appropriately chosen constant parameters achieve the optimal asymptotic dependence on the condition number \citep{nemirovski1995information}. For general convex quadratics, suitable time-varying parameters yield an $\mathcal{O}(1/K^2)$ convergence rate \citep{flammarion2015averaging,orvieto2025lyapunov}. Such acceleration does not automatically extend beyond quadratics: \citet{lessard2016analysis} exhibit a smooth strongly convex example for which parameters optimized for quadratics generate a nonconvergent cycle, while \citet{goujaud2025nonaccelerations} show that no fixed choice of stepsize and momentum can attain an accelerated worst-case rate over smooth strongly convex functions. For general smooth convex objectives, \citet{ghadimi2015global} establish an $\mathcal{O}(1/K)$ rate, including an individual-iterate guarantee with time-varying parameters, while \citet{sun2019nonergodic} obtain a non-ergodic rate of the same order under coercivity. With a suitable time-varying parameter schedule, \citet{sebbouh2021almost} further obtain an asymptotic last-iterate rate of $o(1/K)$.

\section{Preliminaries}\label{sec:preliminaries}
\subsection{Notation and terminology}\label{subsec:notation}
We adopt the following notation throughout this paper. Let $\mathbb{R}^d$ denote the $d$-dimensional Euclidean space. The standard inner product is denoted by $\langle \cdot, \cdot \rangle$. We use $\|\cdot\|$ to denote the Euclidean norm for vectors. We say that the objective function $f$ has an $L$-Lipschitz gradient, or that it is $L$-smooth, if it satisfies
\begin{align*}
    \|\nabla f(x) - \nabla f(y)\| \le L \|x-y\|, \ \forall x, y \in \R^d.
\end{align*}
Under the Lipschitz continuity of the gradient, the objective function satisfies the following descent inequality:
\begin{equation*}
    \left| f(y) - f(x) - \langle \nabla f(x), y - x \rangle \right| \le \frac{L}{2} \|x - y\|^2, \  \forall x, y \in \mathbb{R}^d,
\end{equation*}
and the self-bounding inequality
\begin{equation}\label{eq:self-bounding-inequality}
    \|\nabla f(x)\|^2 \le 2L(f(x) - f^\star), \  \forall x \in \mathbb{R}^d.
\end{equation}
The above two inequalities imply the following Lipschitz property of the square-root optimality gap:
\begin{equation}\label{eq:Lipschitz-square-root-optimality-gap}
    \big|\sqrt{f(x) - f^\star} - \sqrt{f(y)-f^\star}\big| \le \sqrt{\frac{L}{2}}\|x - y\|, \  \forall x, y \in \mathbb{R}^d.
\end{equation}
The proof follows from the observation that
\begin{align*}
    f(y)-f^\star \le\ & f(x)-f^\star+\langle\nabla f(x),y-x\rangle+\frac L2\|y-x\|^2\\
    \le\ & f(x)-f^\star+\sqrt{2L(f(x)-f^\star)}\|y-x\|+\frac L2\|y-x\|^2\\
    =\ & \big(\sqrt{f(x)-f^\star}+\sqrt{\frac L2}\|y-x\|\big)^2.
\end{align*}
Furthermore, when the objective function is also convex, the following smooth convex interpolation inequality holds:
\begin{equation}\label{eq:cocoercivity-inequality}
     f(y) - f(x) - \langle \nabla f(x), y - x \rangle \ge \frac{1}{2L} \|\nabla f(x) - \nabla f(y)\|^2, \  \forall x, y \in \mathbb{R}^d.
\end{equation}
These standard smoothness and interpolation inequalities are given in Lemma~1.2.3 and Theorem~2.1.5 of \citet{nesterov2018lectures}. 

Let $m^k \coloneqq x^k-x^{k-1}$ denote the momentum at iteration $k$, with $m^0=0$. Then the heavy-ball method \eqref{eq:hb} can be written as the two-step update
\begin{align*}
m^{k+1} = \ &\beta_k m^k-\eta_k\nabla f(x^k),\\
x^{k+1} = \ &x^k+m^{k+1}.
\end{align*}
However, the momentum is rescaled at every iteration by the multiplicative factor $\beta_k$. To remove this multiplicative rescaling, we introduce a positive sequence $\{\delta_k\}$ satisfying $\beta_k=\delta_k/\delta_{k-1}$, $k \ge 1$, and define the \textit{rescaled momentum} as $p^k\coloneqq m^k/\delta_{k-1}$, $k \ge 1$ with $p^0=0$. The heavy-ball method can then be equivalently written as
\begin{equation}\label{eq:rescaled-two-step}
\begin{aligned}
p^{k+1} = \ &p^k-\frac{\eta_k}{\delta_k}\nabla f(x^k),\\
x^{k+1} = \ &x^k+\delta_k p^{k+1}.
\end{aligned}
\end{equation}
In the convergence analysis, we work directly with the update scheme \eqref{eq:rescaled-two-step}. 

\subsection{Technique overview} 
For deterministic boundaries, we use $\cF_k=\sigma(u_0,\ldots,u_{k-1})$, with $\cF_0=\{\varnothing,\Omega\}$. For randomized boundaries, the filtration is instead
\begin{equation}\label{eq:random-filtration}
 \mathcal G_k=\sigma(V_0,U_0,\ldots,V_{k-1},U_{k-1}),
 \ \mathcal G_0=\{\varnothing,\Omega\}.
\end{equation}
In the latter case, $A_k$ is $\mathcal G_k$-measurable, and, conditional on $\mathcal G_k$ and $V_k$, the interval $[A_k,A_{k+1}]$ is fixed and $u_k$ is uniform on it. The evaluation times $u_k$ are not mutually independent when the boundaries are random. All uniform-sampling identities below are applied only at gradient-evaluation indices. Each schedule uses an incoming curve $\xi_{k-1}$ and an outgoing curve $\xi_k$ satisfying
\[
 \xi_{k-1}(u_k)=x^k=\xi_k(u_k),\ 
 x^{k+1}=\xi_k(u_{k+1}),\  \xi_{-1}(s)=x^0.
\]
The outgoing curve moves in the direction $p^{k+1}$. Its parameterization determines $\delta_k$ and the form of the potential function. Figure~\ref{fig:curves} illustrates the geometric interpretation behind both schedules. The incoming curve and $p^k$ are measurable with respect to $\cF_k$ in the deterministic-boundary setting and $\mathcal G_k$ in the randomized-boundary setting. By contrast, $x^k$ becomes measurable only after its evaluation time $u_k$ is sampled. For an integrable function $\psi$ fixed under the indicated conditioning, uniform sampling gives
\begin{equation*}
 \int_{A_k}^{A_{k+1}}\psi(s)\dd s
 =\begin{cases}
 (A_{k+1}-A_k)\E[\psi(u_k)\mid\cF_k],
   &\text{deterministic boundaries},\\
 (A_{k+1}-A_k)\E[\psi(u_k)\mid\mathcal G_k,V_k],
   &\text{randomized boundaries}.
 \end{cases}
\end{equation*}
The required random variables can be sampled before the current gradient evaluation. In the rescaled representation \eqref{eq:rescaled-two-step}, $p^{k+1}$ and $\xi_k$ depend only on the current and previous gradient-evaluation times, not on $u_{k+1}$.

\begin{figure}[t]
 \centering
 \includegraphics[width=\textwidth]{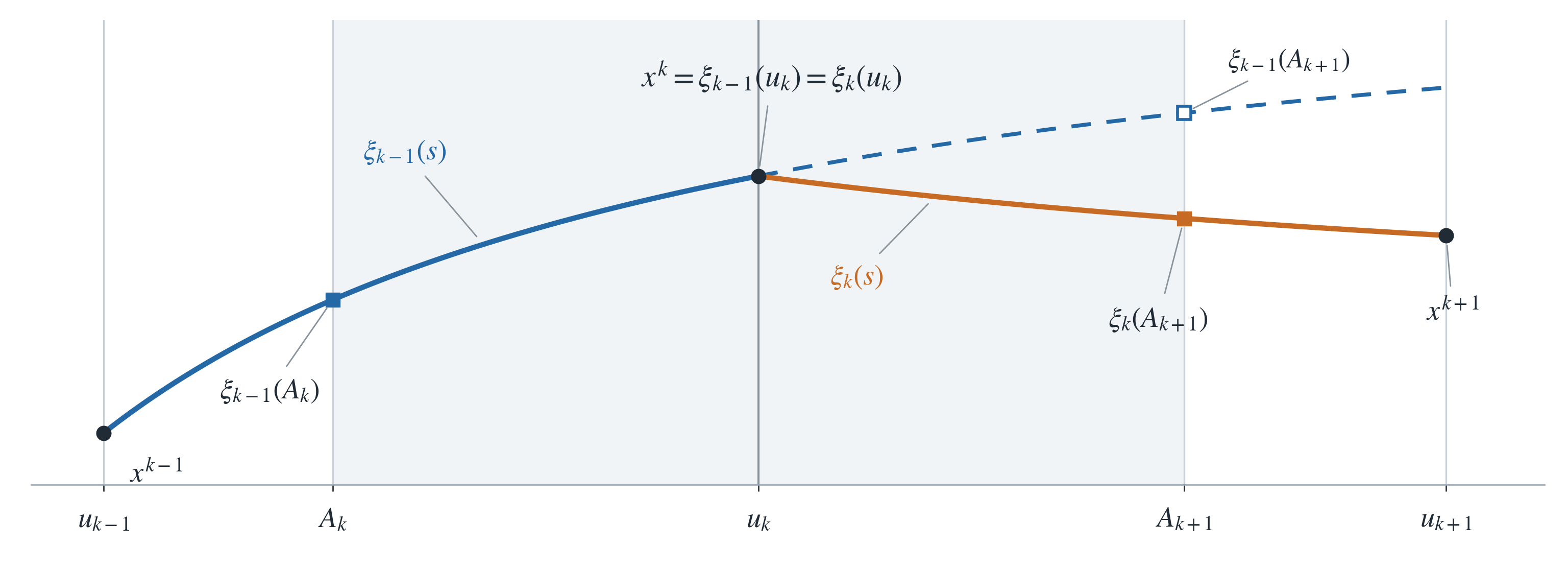}
 \caption{The common interpolation picture. Black circles are iterates at sampled times, and squares are curve states at interval boundaries. The blue dashed segment continues the incoming curve without the gradient update at $u_k$, while the orange segment follows the outgoing curve.}
 \label{fig:curves}
\end{figure}

\section{Deterministic Time Boundaries: Fixed-Time Convergence}\label{sec:fixed-proof}
We first keep the boundaries deterministic and prescribe a total iteration count $K$. Only the evaluation times $u_k$ inside the intervals are randomized. The construction in this section proves the fixed-time $\cO(1/K^{4/3})$ guarantee.

\subsection{Schedule and interpolation} Fix an iteration $k$ and consider the update after the gradient at $x^k$ is evaluated and the rescaled momentum is updated to $p^{k+1}$. Note that $x^{k+1}$ is updated from $x^k$ along the direction $p^{k+1}$ in \eqref{eq:rescaled-two-step}. We represent this discrete
iterate update by an auxiliary twice-differentiable curve $\xi_k(s)$ starting from $x^k$ at time $u_k>0$, i.e., $\xi_k(u_k)=x^k$, and moving along the direction $p^{k+1}$. Along this curve, we have
\begin{equation}\label{eq:integral}
\begin{aligned}
    \frac{\dd}{\dd s}[s(f(\xi_k(s))-f^\star)]
    =\ & f(\xi_k(s))-f^\star
        +s\langle\nabla f(\xi_k(s)),\xi_k'(s)\rangle\\
    \le\ & \langle\nabla f(\xi_k(s)),
        \xi_k(s)+s\xi_k'(s)-x^\star\rangle.
\end{aligned}
\end{equation}
where the inequality follows from convexity. We choose the parameterization so that $\xi_k(s)+s\xi_k'(s)$ is constant along the curve, i.e., $\frac{\dd}{\dd s}(\xi_k(s)+s\xi_k'(s))=0$. Since the curve $\xi_k(s)$ moves along the fixed direction $p^{k+1}$, there exists a scalar function $a_k(s)$ such that $\xi_k'(s)=a_k(s)p^{k+1}$. Consequently, it follows that
\begin{align*}
    \frac{\dd}{\dd s}\bigl(\xi_k(s)+s\xi_k'(s)\bigr)
    =2\xi_k'(s)+s\xi_k''(s)
    =\bigl(2a_k(s)+sa_k'(s)\bigr)p^{k+1}.
\end{align*}
It therefore suffices to choose $sa_k'(s)+2a_k(s)=0$. We normalize the direction by choosing the solution $a_k(s)=s^{-2}$. Integrating $\xi_k'(s)=p^{k+1}/s^2$ from $u_k$ yields
\begin{equation*}
    \xi_k(s)=x^k+\Big(\frac{1}{u_k}-\frac{1}{s}\Big)p^{k+1}, \  \forall s\ge u_k.
\end{equation*}

For a iteration $0\le k<K-1$, we next choose where to evaluate the next gradient along the curve $\xi_k(\cdot)$, i.e., we determine the time $u_{k+1} \in [A_{k+1},A_{k+2}]$. Integrating \eqref{eq:integral} over the interval $[A_{k+1},A_{k+2}]$ gives
\begin{align*}
   A_{k+2}[f(\xi_k(A_{k+2}))-f^\star]-A_{k+1}[f(\xi_k(A_{k+1}))-f^\star] \le \int_{A_{k+1}}^{A_{k+2}} \langle\nabla f(\xi_k(s)), x^k+\frac{p^{k+1}}{u_k}-x^\star\rangle\dd s,
\end{align*}
where we have substituted $\xi_k(s)+s\xi_k'(s)=\xi_k(u_k)+u_k\xi_k'(u_k)=x^k+\frac{p^{k+1}}{u_k}$, as $\xi_k(s)+s\xi_k'(s)$ is constant with respect to $s$. Since the heavy-ball method uses only one gradient evaluation at each iteration, we sample $u_{k+1}\sim\operatorname{Unif}(A_{k+1},A_{k+2})$, independently of the previously sampled times, and update
\begin{equation}\label{eq:fixed-update-x}
     x^{k+1}=\xi_k(u_{k+1})=x^k+\Big(\frac{1}{u_k}-\frac{1}{u_{k+1}}\Big)p^{k+1}.
\end{equation}
Comparing the above identity with \eqref{eq:rescaled-two-step} gives $ \delta_k=\frac{1}{u_k}-\frac{1}{u_{k+1}}$. By the uniform-sampling identity, we obtain
\begin{align*}
    \int_{A_{k+1}}^{A_{k+2}} \langle\nabla f(\xi_k(s)), x^k+\frac{p^{k+1}}{u_k}-x^\star\rangle\dd s
    = (A_{k+2}-A_{k+1})\mathbb E[\langle\nabla f(x^{k+1}),x^{k+1}+\frac{p^{k+1}}{u_{k+1}}-x^\star\rangle \mid \cF_{k+1}],
\end{align*}
since $x^{k+1} = \xi_k(u_{k+1})$, and $\xi_k$, $p^{k+1}$ are $\cF_{k+1}$-measurable. We now determine the next stepsize. At $x^{k+1}$, the rescaled momentum update and \eqref{eq:fixed-update-x} imply
\begin{equation*}
    x^{k+1}+\frac{p^{k+2}}{u_{k+1}}-x^\star=x^k+\frac{p^{k+1}}{u_k}-x^\star-\frac{\eta_{k+1}}{u_{k+1}\delta_{k+1}}\nabla f(x^{k+1}).
\end{equation*}
Squaring both sides and rearranging yields
\begin{align*}
    &\langle\nabla f(x^{k+1}), x^{k+1}+\frac{p^{k+1}}{u_{k+1}}-x^\star\rangle - \frac{\eta_{k+1}}{2u_{k+1}\delta_{k+1}}
      \|\nabla f(x^{k+1})\|^2\\
    = \ &\frac{u_{k+1}\delta_{k+1}}{2\eta_{k+1}}[\|x^k+\frac{p^{k+1}}{u_k}-x^\star\|^2-\|x^{k+1}+\frac{p^{k+2}}{u_{k+1}}-x^\star\|^2]
\end{align*}
To obtain a telescoping difference of distances, we set
\begin{align*}
   \frac{(A_{k+2}-A_{k+1})u_{k+1}\delta_{k+1}}{2\eta_{k+1}}=\frac{L}{2},
\end{align*}
which yields the selection of the stepsize.

Under the above development, for every $k=0,\ldots,K-1$, independently sample $u_k\sim\operatorname{Unif}(A_k,A_{k+1})$, and set $u_K=A_K$. Use the schedule
\begin{equation}\label{eq:fixed-parameters}
    \delta_k=\frac1{u_k}-\frac1{u_{k+1}} > 0, \ \eta_k=\frac{(A_{k+1}-A_k)u_k\delta_k}{L}, \ \beta_k=\frac{\delta_k}{\delta_{k-1}}\ (k\ge1).
\end{equation}
All $\delta_k$ are positive almost surely. Define $\xi_{-1}(s)=x^0$ for $s\ge A_0$. For all iterates, we have $x^k=\xi_{k-1}(u_k)$ for $0\le k<K$, and $x^K=\xi_{K-1}(A_K)$. All these parameters can be sampled before the algorithm starts, and they give exactly the coefficients in \eqref{eq:fixed-coefficients}. The rescaled momentum update simplifies to
\begin{equation}\label{eq:fixed-rescaled-momentum}
    p^{k+1}=p^k-\frac{(A_{k+1}-A_k)u_k}{L}\nabla f(x^k).
\end{equation}

\subsection{Potential function decrease} 
We introduce the potential function over the interval $[A_k,A_{k+1}]$ as follows:
\begin{equation}\label{eq:fixed-energy}
    \Phi_k=A_k[f(\xi_{k-1}(A_k))-f^\star]+\frac{L}{2}\|\xi_{k-1}(A_k)+\frac{p^k}{A_k}-x^\star\|^2,
\end{equation}
which combines the function value gap and the optimality distance along the curve. For $k\ge1$, recall that
\begin{align*}
    \xi_{k-1}(s)= \ &x^{k-1}+\Big(\frac{1}{u_{k-1}}-\frac{1}{s}\Big)p^k,\\
    x^k=\xi_{k-1}(u_k)= \ &x^{k-1}+\Big(\frac{1}{u_{k-1}}-\frac{1}{u_k}\Big)p^k.
\end{align*}
Subtracting the second identity from the first gives $\xi_{k-1}(s)=x^k+(\frac{1}{u_k}-\frac{1}{s})p^k$. Since $\xi_k(s)=x^k+(\frac{1}{u_k}-\frac{1}{s})p^{k+1}$, the difference between the two curves satisfies
\begin{align*}
    \xi_k(s)-\xi_{k-1}(s)= \Big(\frac{1}{u_k}-\frac{1}{s}\Big)(p^{k+1}-p^k) = -\frac{(A_{k+1}-A_k)(s-u_k)}{Ls}\nabla f(x^k), \ \forall s\ge u_k,
\end{align*}
where the second equality follows from \eqref{eq:fixed-rescaled-momentum}. Together with $\xi_{k-1}(s)+\frac{p^k}{s}=\xi_{k-1}(A_k)+\frac{p^k}{A_k}$, which holds since $\xi_{k-1}(s)+s\xi'_{k-1}(s)=\xi_{k-1}(s)+\frac{p^k}{s}$ is constant along the curve, we obtain
\begin{equation}\label{eq:fixed-boundary}
\begin{aligned}
    \xi_k(A_{k+1})= \ &\xi_{k-1}(A_{k+1})-\frac{(A_{k+1}-A_k)(A_{k+1}-u_k)}{LA_{k+1}}\nabla f(x^k),\\
    \xi_k(A_{k+1})+\frac{p^{k+1}}{A_{k+1}}= \ &\xi_{k-1}(A_k)+\frac{p^k}{A_k}-\frac{A_{k+1}-A_k}{L}\nabla f(x^k).
\end{aligned}
\end{equation}
Figure~\ref{fig:correction} illustrates the endpoint corrections at \(A_{k+1}\) induced by the gradient update at \(u_k\).
\begin{figure}[t]
    \centering
    \resizebox{\textwidth}{!}{
\begin{tikzpicture}[x=1cm,y=1cm,font=\small,>=Latex]
    \definecolor{hbblue}{RGB}{35,105,166}
    \definecolor{hborange}{RGB}{199,104,30}
    \definecolor{hbgray}{RGB}{143,155,166}
    \definecolor{hblight}{RGB}{242,245,248}
    \tikzset{old/.style={hbblue,line width=1.05pt},
        new/.style={hborange,line width=1.05pt},
        guide/.style={hbgray!65,line width=.45pt},
        dot/.style={circle,fill=black!85,inner sep=1.65pt},
        oldsquare/.style={rectangle,fill=hbblue,inner sep=1.65pt},
        newsquare/.style={rectangle,fill=hborange,inner sep=1.65pt}}
    \path[use as bounding box] (0,-.62) rectangle (16.55,4.27);
    \begin{scope}
        \fill[hblight] (.8,.1) rectangle (6.9,3.8);
        \draw[guide] (.8,.1)--(.8,3.8);
        \draw[guide] (6.9,.1)--(6.9,3.8);
        \draw[guide,densely dashed] (3.728,.1)--(3.728,2.33);
        \draw[guide,->] (.35,.1)--(7.6,.1);
        \foreach \x/\lab in {.8/A_k,3.728/u_k,6.9/A_{k+1}}{
            \draw[guide] (\x,.1)--(\x,-.02);
            \node[below] at (\x,-.05) {$\lab$};
        }
        \node[anchor=west] at (7.48,.37) {$s$};
        \draw[old,domain=1:1.48,samples=50,variable=\t]
            plot ({.8+6.1*(\t-1)},{.4+3*(1-1/\t)});
        \draw[old,dashed,domain=1.48:2,samples=50,variable=\t]
            plot ({.8+6.1*(\t-1)},{.4+3*(1-1/\t)});
        \draw[new,domain=1.48:2,samples=50,variable=\t]
            plot ({.8+6.1*(\t-1)},
                {.4+3*(1-1/1.48)+(1/1.48-1/\t)*(3-2.7*1.48)});
        \node[oldsquare] at (.8,.4) {};
        \node[dot] at (3.728,1.372973) {};
        \node[rectangle,draw=hbblue,fill=white,line width=.9pt,
            inner sep=1.55pt] at (6.9,1.9) {};
        \node[newsquare] at (6.9,1.198) {};
        \node[text=hbblue] at (1.75,1.67) {$\xi_{k-1}(s)$};
        \draw[guide] (2.08,1.51)--(2.58,1.08);
        \node at (3.60,2.68) {$x^k=\xi_{k-1}(u_k)$};
        \draw[guide] (3.71,2.39)--(3.728,1.5);
        \node[text=hborange] at (5.0,.70) {$\xi_k(s)$};
        \draw[guide] (5.2,.89)--(5.36,1.24);
        \draw[hborange,->,line width=.8pt] (7.19,1.87)--(7.19,1.23);
        \node[text=hborange,font=\footnotesize] at (4.62,3.39)
            {$-\dfrac{(A_{k+1}-A_k)(A_{k+1}-u_k)}{LA_{k+1}}\nabla f(x^k)$};
        \draw[guide] (6.95,3.07)--(7.40,2.49)--(7.19,1.61);
    \end{scope}
    \begin{scope}[xshift=8.5cm]
        \fill[hblight] (.8,.1) rectangle (6.9,3.8);
        \draw[guide] (.8,.1)--(.8,3.8);
        \draw[guide] (6.9,.1)--(6.9,3.8);
        \draw[guide,densely dashed] (3.728,.1)--(3.728,3.72);
        \draw[guide,->] (.35,.1)--(7.6,.1);
        \foreach \x/\lab in {.8/A_k,3.728/u_k,6.9/A_{k+1}}{
            \draw[guide] (\x,.1)--(\x,-.02);
            \node[below] at (\x,-.05) {$\lab$};
        }
        \node[anchor=west] at (7.48,.37) {$s$};
        \draw[old] (.8,3.4)--(3.728,3.4);
        \draw[old,dashed] (3.728,3.4)--(6.9,3.4);
        \draw[new] (3.728,.7)--(6.9,.7);
        \draw[new,densely dashed,->] (3.728,3.32)--(3.728,.80);
        \node[oldsquare] at (.8,3.4) {};
        \node[circle,draw=hbblue,fill=white,line width=.9pt,
            inner sep=1.55pt] at (3.728,3.4) {};
        \node[circle,fill=hborange,inner sep=1.65pt] at (3.728,.7) {};
        \node[rectangle,draw=hbblue,fill=white,line width=.9pt,
            inner sep=1.55pt] at (6.9,3.4) {};
        \node[newsquare] at (6.9,.7) {};
        \node[text=hbblue] at (2.20,2.86) {$\xi_{k-1}(s)+\dfrac{p^k}{s}$};
        \node[text=hborange] at (5.00,1.14) {$\xi_k(s)+\dfrac{p^{k+1}}s$};
        \node[align=center,anchor=east,text=hborange,font=\footnotesize]
            at (3.40,1.88) {gradient\\update};
        \draw[hborange,->,line width=.8pt] (7.19,3.34)--(7.19,.77);
        \node[text=hborange,font=\footnotesize] at (5.50,2.17)
            {$-\dfrac{A_{k+1}-A_k}{L}\nabla f(x^k)$};
        \draw[guide] (6.95,2.06)--(7.10,2.06);
    \end{scope}
\end{tikzpicture}}
    \caption{The two endpoint corrections on $[A_k,A_{k+1}]$.
    The blue dashed lines extend the curve $\xi_{k-1}$ beyond $u_k$ without applying the gradient update, and the orange lines represent the quantities after the update.}
    \label{fig:correction}
\end{figure}

\begin{lemma}\label{lem:fixed-gradient-bounds}
For every $s\in[A_k,A_{k+1}]$, we have
\begin{align*}
   \|\nabla f(\xi_{k-1}(s))-\nabla f(\xi_{k-1}(A_{k+1}))\| &\le\frac{L(A_{k+1}-s)}{A_kA_{k+1}}\|p^k\|, \\
   \|\nabla f(\xi_{k-1}(s))\| &\le\sqrt{\frac{2L\Phi_k}{A_k}}+\frac{L(A_{k+1}-A_k)}{A_kA_{k+1}}\|p^k\|.
\end{align*}
\end{lemma}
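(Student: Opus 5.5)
The plan is to use the explicit form of the incoming curve together with $L$-smoothness for the first bound, and then the self-bounding inequality \eqref{eq:self-bounding-inequality} at the left endpoint $A_k$ for the second. The one feature to keep track of is that $\xi_{k-1}$ is affine in $1/s$ with direction $p^k$.

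For $k\ge1$, the incoming curve can be written as $\xi_{k-1}(s)=x^k+(\frac1{u_k}-\frac1s)p^k$; the same formula, started from $x^{k-1}$, is valid on all of $[A_k,A_{k+1}]$ and not only for $s\ge u_k$. For $k=0$ we have $\xi_{-1}\equiv x^0$ and $p^0=0$, so both bounds hold trivially there, the second following from the self-bounding inequality alone. For any $s,t$ in the interval this gives
\[
\xi_{k-1}(s)-\xi_{k-1}(t)=\Big(\frac1t-\frac1s\Big)p^k .
\]
Applying $L$-Lipschitz continuity of $\nabla f$ with $t=A_{k+1}$ yields
\[
\|\nabla f(\xi_{k-1}(s))-\nabla f(\xi_{k-1}(A_{k+1}))\|\le L\,\frac{A_{k+1}-s}{sA_{k+1}}\|p^k\|\le \frac{L(A_{k+1}-s)}{A_kA_{k+1}}\|p^k\|,
\]
where the last step uses $s\ge A_k$. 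This is the first claim.

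For the second claim, I compare instead with the left endpoint $t=A_k$. The same computation gives
\[
\|\nabla f(\xi_{k-1}(s))-\nabla f(\xi_{k-1}(A_k))\|\le L\,\frac{s-A_k}{sA_k}\|p^k\|\le\frac{L(A_{k+1}-A_k)}{A_kA_{k+1}}\|p^k\|.
\]
The last inequality holds because the map $s\mapsto (s-A_k)/s$ is increasing, so it is at most $(A_{k+1}-A_k)/A_{k+1}$. Next, the self-bounding inequality \eqref{eq:self-bounding-inequality} gives
\[
\|\nabla f(\xi_{k-1}(A_k))\|\le\sqrt{2L(f(\xi_{k-1}(A_k))-f^\star)}.
\]
From the definition \eqref{eq:fixed-energy} of $\Phi_k$ we have $A_k[f(\xi_{k-1}(A_k))-f^\star]\le\Phi_k$, since the distance term in $\Phi_k$ is nonnegative. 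Hence $\|\nabla f(\xi_{k-1}(A_k))\|\le\sqrt{2L\Phi_k/A_k}$, and the triangle inequality finishes the second claim.

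There is no real obstacle in this argument. The only points needing care are, first, choosing the left endpoint $A_k$ rather than $A_{k+1}$ as the anchor in the second bound, so that $\Phi_k$, which is evaluated at $A_k$, controls the anchor gradient. Second, one must check that the closed form of $\xi_{k-1}$ remains valid for $s<u_k$, including the degenerate case $k=0$.
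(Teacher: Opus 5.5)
Your proof is correct and follows the paper's argument essentially step for step. Both use the identity $\xi_{k-1}(s)-\xi_{k-1}(t)=(\frac1t-\frac1s)p^k$ with Lipschitz continuity at $t=A_{k+1}$ for the first bound, and anchor at $A_k$ via the self-bounding inequality, $A_k[f(\xi_{k-1}(A_k))-f^\star]\le\Phi_k$, and monotonicity of $(s-A_k)/s$ for the second; your remarks on the closed form holding for $s<u_k$ and on the case $k=0$ are correct details the paper leaves implicit.
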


\begin{proof}
Recall that $\xi_{k-1}(s)+s\xi_{k-1}'(s)$ is constant with respect to $s$ and $\xi_{k-1}'(s)=p^k/s^2$. We thus have $\xi_{k-1}(s)-\xi_{k-1}(t)=(\frac1t-\frac1s)p^k$. Hence, by the Lipschitz continuity of the gradient,
\begin{equation*}
    \|\nabla f(\xi_{k-1}(s))-\nabla f(\xi_{k-1}(t))\|\le L\Big|\frac1s-\frac1t\Big| \cdot \|p^k\|.
\end{equation*}
Taking $t=A_{k+1}$ and using $s\ge A_k$ proves the first inequality. For the second bound, recall that $\frac1{2L}\|\nabla f(x)\|^2\le f(x)-f^\star\le\frac L2\|x-x^\star\|^2$. Since $A_k[f(\xi_{k-1}(A_k))-f^\star]\le\Phi_k$, we have $\|\nabla f(\xi_{k-1}(A_k))\|\le\sqrt{\frac{2L\Phi_k}{A_k}}$. Therefore, we have
\begin{align*}
    \|\nabla f(\xi_{k-1}(s))\| \le \|\nabla f(\xi_{k-1}(A_k))\|+\frac{L(s-A_k)}{A_ks}\|p^k\| \le \sqrt{\frac{2L\Phi_k}{A_k}}+\frac{L(A_{k+1}-A_k)}{A_kA_{k+1}}\|p^k\|.
\end{align*}
The proof is complete.
\end{proof}

\begin{lemma}\label{lem:fixed-energy-defect}
For every $0 \le k < K$, the potential function \eqref{eq:fixed-energy} satisfies
\begin{equation}\label{eq:fixed-defect}
\begin{aligned}
   &\mathbb E[\Phi_{k+1}\mid\mathcal F_k]-\Phi_k \\
   \le \ &\frac{(A_{k+1}-A_k)^2}{L}\mathbb E\Big[\frac{1}{2}\|\nabla f(x^k)\|^2-\frac{A_{k+1}-u_k}{A_{k+1}-A_k}\langle\nabla f(\xi_{k-1}(A_{k+1})),\nabla f(x^k)\rangle\mid\mathcal F_k\Big]\\
    &+\frac{(A_{k+1}-A_k)^2}{2LA_{k+1}}\mathbb E[(A_{k+1}-u_k)^2\|\nabla f(x^k)\|^2\mid\mathcal F_k].
\end{aligned}
\end{equation}
\end{lemma}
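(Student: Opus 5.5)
The plan is to split $\Phi_{k+1}$ into its distance part and its function-value part and compare each with the corresponding part of $\Phi_k$. Two identities established above are the basis. First, set $z_k\coloneqq\xi_{k-1}(A_k)+p^k/A_k$. Since $\xi_{k-1}(s)+p^k/s$ is constant in $s$, we have $z_k=\xi_{k-1}(s)+p^k/s$ for every $s\ge u_{k-1}$, and in particular $z_k=x^k+p^k/u_k$. Second, by the second line of \eqref{eq:fixed-boundary}, $z_{k+1}=z_k-\frac{A_{k+1}-A_k}{L}\nabla f(x^k)$. Expanding the square then gives
\begin{equation*}
\frac L2\|z_{k+1}-x^\star\|^2-\frac L2\|z_k-x^\star\|^2=-(A_{k+1}-A_k)\langle\nabla f(x^k),z_k-x^\star\rangle+\frac{(A_{k+1}-A_k)^2}{2L}\|\nabla f(x^k)\|^2 .
\end{equation*}
The last term is the $\frac12\|\nabla f(x^k)\|^2$ contribution in \eqref{eq:fixed-defect}.

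For the function-value part, I will insert the intermediate point $y\coloneqq\xi_{k-1}(A_{k+1})$, which continues the incoming curve without the gradient update, and write
\begin{equation*}
A_{k+1}[f(\xi_k(A_{k+1}))-f^\star]-A_k[f(\xi_{k-1}(A_k))-f^\star]=\bigl(A_{k+1}[f(y)-f^\star]-A_k[f(\xi_{k-1}(A_k))-f^\star]\bigr)+A_{k+1}[f(\xi_k(A_{k+1}))-f(y)].
\end{equation*}
For the first bracket, I apply \eqref{eq:integral} to the curve $\xi_{k-1}$, which satisfies the same constancy property, and integrate over $[A_k,A_{k+1}]$. This bounds the bracket by $\int_{A_k}^{A_{k+1}}\langle\nabla f(\xi_{k-1}(s)),z_k-x^\star\rangle\dd s$. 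The integrand is a function of $s$ fixed given $\mathcal F_k$, because $\xi_{k-1}$ and $p^k$ are $\mathcal F_k$-measurable. The uniform-sampling identity, together with $x^k=\xi_{k-1}(u_k)$, turns this integral into $(A_{k+1}-A_k)\E[\langle\nabla f(x^k),z_k-x^\star\rangle\mid\mathcal F_k]$. This exactly cancels the conditional expectation of the cross term from the distance part. The case $k=0$ needs a separate check, since there $\xi_{-1}\equiv x^0$ and $p^0=0$. In that case $\frac{\dd}{\dd s}[s(f(x^0)-f^\star)]=f(x^0)-f^\star\le\langle\nabla f(x^0),x^0-x^\star\rangle$ by convexity, so the same bound holds with $z_0=x^0$.

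For the second bracket, I use the first line of \eqref{eq:fixed-boundary}: $\xi_k(A_{k+1})=y-c\nabla f(x^k)$ with $c=\frac{(A_{k+1}-A_k)(A_{k+1}-u_k)}{LA_{k+1}}$. The descent inequality gives $f(y-c\nabla f(x^k))\le f(y)-c\langle\nabla f(y),\nabla f(x^k)\rangle+\frac{Lc^2}{2}\|\nabla f(x^k)\|^2$. Multiplying by $A_{k+1}$ yields $-\frac{(A_{k+1}-A_k)(A_{k+1}-u_k)}{L}\langle\nabla f(\xi_{k-1}(A_{k+1})),\nabla f(x^k)\rangle$, which is the inner-product term in \eqref{eq:fixed-defect} once $(A_{k+1}-A_k)^2/L$ is factored out. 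It also yields $\frac{(A_{k+1}-A_k)^2(A_{k+1}-u_k)^2}{2LA_{k+1}}\|\nabla f(x^k)\|^2$, which is the last term. Summing the three pieces, taking $\E[\cdot\mid\mathcal F_k]$, and using that $\Phi_k$ is $\mathcal F_k$-measurable gives the claim.

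The only delicate step is the measurability bookkeeping in the uniform-sampling step. One must check that $\xi_{k-1}$ is defined on all of $[A_k,A_{k+1}]$, which holds since $u_{k-1}\le A_k$. One must also check that the integrand does not depend on $u_k$, so that it is legitimately the fixed function $\psi$ in the sampling identity. Everything else is algebra with \eqref{eq:fixed-boundary}.
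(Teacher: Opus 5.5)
Your proposal is correct and follows essentially the same route as the paper. You bound the function-value change along the incoming curve via \eqref{eq:integral} and uniform sampling, cancel the resulting cross term against the expanded distance change from the second identity in \eqref{eq:fixed-boundary}, and control the endpoint jump $A_{k+1}[f(\xi_k(A_{k+1}))-f(\xi_{k-1}(A_{k+1}))]$ with the descent inequality and the first identity in \eqref{eq:fixed-boundary}; your separate $k=0$ check and measurability remarks only make explicit what the paper leaves implicit.
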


\begin{proof}
For the objective function value, due to \eqref{eq:integral}, we have
\begin{align*}
  &A_{k+1}(f(\xi_{k-1}(A_{k+1}))-f^\star)-A_k(f(\xi_{k-1}(A_k))-f^\star) \\
  \le \ &\int_{A_k}^{A_{k+1}} \inner{\nabla f(\xi_{k-1}(s))}{\xi_{k-1}(s)+s\xi'_{k-1}(s)-x^\star} \dd s \\
  = \ &(A_{k+1}-A_k)\mathbb E[\langle\nabla f(x^k),\xi_{k-1}(A_k)+\frac{p^k}{A_k}-x^\star\rangle \mid\mathcal F_k],
\end{align*}
where the second step uses that $u_k$ is uniformly distributed on $[A_k,A_{k+1}]$ conditional on $\mathcal F_k$, that $x^k=\xi_{k-1}(u_k)$, and that $\xi_{k-1}(s)+s\xi'_{k-1}(s)$ is constant along curves. From the second identity in \eqref{eq:fixed-boundary}, we have $\xi_k(A_{k+1})+\frac{p^{k+1}}{A_{k+1}} = \xi_{k-1}(A_k)+\frac{p^k}{A_k} - \frac{A_{k+1}-A_k}{L}\nabla f(x^k)$, which implies
\begin{equation*}
\begin{aligned}
    &\frac{L}{2}\|\xi_k(A_{k+1})+\frac{p^{k+1}}{A_{k+1}}-x^\star\|^2 - \frac{L}{2}\|\xi_{k-1}(A_k)+\frac{p^k}{A_k}-x^\star\|^2 \\
    = \ & -(A_{k+1}-A_k)\langle\nabla f(x^k),\xi_{k-1}(A_k)+\frac{p^k}{A_k}-x^\star\rangle+\frac{(A_{k+1}-A_k)^2}{2L}\|\nabla f(x^k)\|^2.
\end{aligned}
\end{equation*}
By the definition of the potential function \eqref{eq:fixed-energy}, we obtain
\begin{align*}
    \mathbb E[\Phi_{k+1}\mid\mathcal F_k]-\Phi_k
    \le \ &A_{k+1}\mathbb E[f(\xi_k(A_{k+1}))-f(\xi_{k-1}(A_{k+1}))\mid\mathcal F_k]+\frac{(A_{k+1}-A_k)^2}{2L}\mathbb E[\|\nabla f(x^k)\|^2\mid\mathcal F_k].
\end{align*}
Since $f$ is $L$-smooth, it follows that
\begin{align*}
    &f(\xi_k(A_{k+1}))-f(\xi_{k-1}(A_{k+1}))\\
    \le \ &\inner{\nabla f(\xi_{k-1}(A_{k+1}))}{\xi_k(A_{k+1}) - \xi_{k-1}(A_{k+1})} + \frac{L}{2}\|\xi_k(A_{k+1}) - \xi_{k-1}(A_{k+1})\|^2 \\
    = \ &-\frac{(A_{k+1}-A_k)(A_{k+1}-u_k)}{LA_{k+1}}\inner{\nabla f(\xi_{k-1}(A_{k+1}))}{\nabla f(x^k)} + \frac{(A_{k+1}-A_k)^2(A_{k+1}-u_k)^2}{2LA_{k+1}^2}\|\nabla f(x^k)\|^2,
\end{align*}
where we substitute the first identity in \eqref{eq:fixed-boundary} in the second step. The proof is complete by combining the above inequalities.
\end{proof}

\begin{lemma}\label{lem:fixed-local}
Suppose $A_k\ge1$ and $\theta_k=(A_{k+1}-A_k)/\sqrt{A_k}\le3/8$. Then we have
\begin{equation}\label{eq:fixed-local}
    \mathbb E[\Phi_{k+1}\mid\mathcal F_k] \le(1+\theta_k^3)\Phi_k+\frac L5\theta_k^3\frac{\|p^k\|^2}{A_k^2}.
\end{equation}
\end{lemma}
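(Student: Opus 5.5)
Starting from Lemma~\ref{lem:fixed-energy-defect}, write $h=A_{k+1}-A_k$, $g=\nabla f(x^k)$, $\bar g=\nabla f(\xi_{k-1}(A_{k+1}))$ and $w=(A_{k+1}-u_k)/h\in[0,1]$. Under $\cF_k$, $u_k$ is uniform, so $w\sim\operatorname{Unif}(0,1)$, and $x^k=\xi_{k-1}(u_k)$. The key idea is that the first bracket in \eqref{eq:fixed-defect} is, to leading order, zero on average.

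\textbf{Step 1: expand around $\bar g$.} Set $e=g-\bar g$. Then
\[
\tfrac12\|g\|^2-w\langle\bar g,g\rangle=\big(\tfrac12-w\big)\|\bar g\|^2+(1-w)\langle\bar g,e\rangle+\tfrac12\|e\|^2 .
\]
Since $\bar g$ is $\cF_k$-measurable and $\E[\tfrac12-w\mid\cF_k]=0$, the $\|\bar g\|^2$ term vanishes in expectation. By Lemma~\ref{lem:fixed-gradient-bounds} with $s=u_k$,
\[
\|e\|\le \frac{L h w}{A_kA_{k+1}}\|p^k\|\le \frac{L h w}{A_k^2}\|p^k\| .
\]
So the remaining terms contribute at most
\[
\frac{h^2}{L}\,\E\Big[(1-w)\|\bar g\|\|e\|+\tfrac12\|e\|^2\Big]\le h^3\|\bar g\|\frac{\|p^k\|}{6A_k^2}+\frac{L h^4}{6A_k^4}\|p^k\|^2,
\]
using $\E[w(1-w)]=1/6$ and $\E[w^2]=1/3$.

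\textbf{Step 2: bound the last term of \eqref{eq:fixed-defect}.} It equals $\frac{h^4}{2LA_{k+1}}\E[w^2\|g\|^2]$. Apply the second bound of Lemma~\ref{lem:fixed-gradient-bounds}, which gives $\|g\|\le\sqrt{2L\Phi_k/A_k}+Lh\|p^k\|/A_k^2$, together with $(a+b)^2\le 2a^2+2b^2$. The result is of order
\[
\frac{h^4}{A_k^2}\Phi_k+\frac{L h^6}{A_k^5}\|p^k\|^2 .
\]
Use the same gradient bound for $\|\bar g\|$ in Step 1.

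\textbf{Step 3: convert to $\theta_k$.} Write $h=\theta_k\sqrt{A_k}$. The cross term in Step 1 becomes
\[
h^3\|\bar g\|\frac{\|p^k\|}{6A_k^2}\le \frac{\theta_k^3}{6}\Big(\sqrt{2L\Phi_k}\,\frac{\|p^k\|}{A_k}+L\theta_k\frac{\|p^k\|^2}{A_k^{3/2}}\Big).
\]
Apply Young's inequality $\sqrt{2L\Phi_k}\,\|p^k\|/A_k\le c\,\Phi_k+\frac{L}{2c}\|p^k\|^2/A_k^2$. Every other term carries extra powers of $\theta_k$ and of $A_k^{-1/2}\le1$ relative to $\theta_k^3$:
\begin{itemize}
\item $h^4/A_k^2=\theta_k^4$;
\item $L h^4\|p^k\|^2/A_k^4=L\theta_k^4\|p^k\|^2/A_k^{2}$;
\item $L h^6\|p^k\|^2/A_k^5=L\theta_k^6\|p^k\|^2/A_k^2$.
\end{itemize}
Here $\|p^k\|^2/A_k^{3/2}\le\|p^k\|^2/A_k^{2}\cdot A_k^{1/2}$ needs care: that term is $L\theta_k^4\|p^k\|^2/A_k^{2}\cdot(A_k^{1/2}/A_k^{0})$. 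Recheck: $h^3\cdot Lh\|p^k\|^2/A_k^4=L\theta_k^4\|p^k\|^2/A_k^2$, so it is fine. Using $\theta_k\le3/8$ then absorbs these into the coefficients $1\cdot\theta_k^3$ for $\Phi_k$ and $\frac L5\theta_k^3$ for $\|p^k\|^2/A_k^2$.

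\textbf{Main obstacle.} The hard part is the constant bookkeeping. One must choose the Young parameter $c$ so that the $\Phi_k$ coefficient $\frac{c}{6}+O(\theta_k)$ is at most $1$, while the momentum coefficient $\frac{1}{12c}+O(\theta_k)$ is at most $1/5$. A choice near $c\approx 1/2$ to $1$ looks viable with $\theta_k\le 3/8$. If the crude bound $(a+b)^2\le2a^2+2b^2$ is too lossy, I would instead keep the exact $\E[w^2]=1/3$ and $A_{k+1}\ge A_k$ factors and expand the squares exactly.
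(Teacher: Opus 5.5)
Your proposal is the paper's proof. It uses the same completion of the square around $\nabla f(\xi_{k-1}(A_{k+1}))$, with the $(\tfrac12-w)\|\bar g\|^2$ term vanishing in conditional expectation. It also uses the same uniform moments $\tfrac16$ and $\tfrac13$, both bounds of Lemma~\ref{lem:fixed-gradient-bounds} together with $(a+b)^2\le2a^2+2b^2$ (which is not too lossy), and a final Young step.

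The one thing to fix is your guess for the Young parameter. Your own bookkeeping gives total coefficient $\theta_k^3\big(\tfrac c6+\tfrac23\theta_k\big)$ on $\Phi_k$ and $L\theta_k^3\big(\tfrac1{12c}+\tfrac{\theta_k}3+\tfrac{\theta_k^3}3\big)$ on $\|p^k\|^2/A_k^2$. At $\theta_k=3/8$ you therefore need $\tfrac1{12c}\le\tfrac15-\tfrac{73}{512}=\tfrac{147}{2560}$, i.e.\ $c\gtrsim1.45$, together with $c\le\tfrac92$. The range $c\in[\tfrac12,1]$ you suggest fails: at $c=1$ the momentum coefficient is about $0.226>\tfrac15$.

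The paper takes $c=3$, i.e.\ $\tfrac{\sqrt{2L\Phi_k}}{6}\tfrac{\|p^k\|}{A_k}\le\tfrac{\Phi_k}{2}+\tfrac{L}{36}\tfrac{\|p^k\|^2}{A_k^2}$. This yields $\tfrac12+\tfrac23\theta_k\le1$ and $\tfrac1{36}+\tfrac13\theta_k+\tfrac13\theta_k^3<\tfrac15$, which closes the argument.
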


\begin{proof}
We bound the two expectations in Lemma \ref{lem:fixed-energy-defect} separately. For the first expectation, by completing the square with respect to the gradient difference, it follows that
\begin{align*}
    &\frac12\|\nabla f(x^k)\|^2-\frac{A_{k+1}-u_k}{A_{k+1}-A_k}\langle\nabla f(\xi_{k-1}(A_{k+1})),\nabla f(x^k)\rangle+\Big(\frac12-\frac{u_k-A_k}{A_{k+1}-A_k}\Big)\|\nabla f(\xi_{k-1}(A_{k+1}))\|^2\\
    = \ &\frac{u_k-A_k}{A_{k+1}-A_k}\langle\nabla f(\xi_{k-1}(A_{k+1})),\nabla f(x^k)-\nabla f(\xi_{k-1}(A_{k+1}))\rangle+\frac12\|\nabla f(x^k)-\nabla f(\xi_{k-1}(A_{k+1}))\|^2.
\end{align*}
By the conditional uniform-moment identities in Lemma~\ref{lem:technical-interval-calculus}, $\mathbb E[\frac{u_k-A_k}{A_{k+1}-A_k}\mid\mathcal F_k]=\frac12$. Moreover, $\xi_{k-1}(A_{k+1})$ is $\mathcal F_k$-measurable. Consequently, the term $(\frac12-\frac{u_k-A_k}{A_{k+1}-A_k})\|\nabla f(\xi_{k-1}(A_{k+1}))\|^2$ vanishes after taking the conditional expectation, which yields
\begin{align*}
 &\mathbb E\Big[\frac12\|\nabla f(x^k)\|^2
  -\frac{A_{k+1}-u_k}{A_{k+1}-A_k}
       \langle\nabla f(\xi_{k-1}(A_{k+1})),\nabla f(x^k)\rangle
       \mid\mathcal F_k\Big]\\
 ={}&\mathbb E\Big[\frac{u_k-A_k}{A_{k+1}-A_k}
       \langle\nabla f(\xi_{k-1}(A_{k+1})),
          \nabla f(x^k)-\nabla f(\xi_{k-1}(A_{k+1}))\rangle
       \mid\mathcal F_k\Big]\\
 &+\frac12\mathbb E\big[\|\nabla f(x^k)-\nabla f(\xi_{k-1}(A_{k+1}))\|^2
       \mid\mathcal F_k\big]\\
 \le{}&\mathbb E\Big[\frac{u_k-A_k}{A_{k+1}-A_k}
       \|\nabla f(\xi_{k-1}(A_{k+1}))\|
       \|\nabla f(x^k)-\nabla f(\xi_{k-1}(A_{k+1}))\|
       \mid\mathcal F_k\Big]\\
 &+\frac12\mathbb E\big[\|\nabla f(x^k)-\nabla f(\xi_{k-1}(A_{k+1}))\|^2
       \mid\mathcal F_k\big].
\end{align*}
Applying the first inequality in Lemma \ref{lem:fixed-gradient-bounds} gives
\begin{align*}
    &\mathbb E\Big[\frac12\|\nabla f(x^k)\|^2-\frac{A_{k+1}-u_k}{A_{k+1}-A_k}\langle\nabla f(\xi_{k-1}(A_{k+1})),\nabla f(x^k)\rangle\mid\mathcal F_k\Big]\\
    \le \ &\mathbb E\Big[\frac{L(A_{k+1}-u_k)(u_k-A_k)}{(A_{k+1}-A_k)A_kA_{k+1}}\|\nabla f(\xi_{k-1}(A_{k+1}))\| \|p^k\|+\frac{L^2(A_{k+1}-u_k)^2}{2A_k^2A_{k+1}^2}\|p^k\|^2\mid\mathcal F_k\Big] \\
    = \ &\frac{L}{(A_{k+1}-A_k)A_kA_{k+1}}\|\nabla f(\xi_{k-1}(A_{k+1}))\|\|p^k\| \cdot \mathbb E[(u_k-A_k)(A_{k+1}-u_k)\mid\mathcal F_k] \\
    &+\frac{L^2}{2A_k^2A_{k+1}^2}\|p^k\|^2 \cdot \mathbb E[(A_{k+1}-u_k)^2\mid\mathcal F_k].
\end{align*}
The conditional uniform-moment identities in Lemma~\ref{lem:technical-interval-calculus} give
\begin{align*}
    \mathbb E[(u_k-A_k)(A_{k+1}-u_k)\mid\mathcal F_k]&=\frac{(A_{k+1}-A_k)^2}{6},\\
    \mathbb E[(A_{k+1}-u_k)^2\mid\mathcal F_k]&=\frac{(A_{k+1}-A_k)^2}{3}.
\end{align*}
Therefore, we have
\begin{align*}
    &\mathbb E\Big[\frac12\|\nabla f(x^k)\|^2-\frac{A_{k+1}-u_k}{A_{k+1}-A_k}\langle\nabla f(\xi_{k-1}(A_{k+1})),\nabla f(x^k)\rangle\mid\mathcal F_k\Big] \\
    \le \ &\frac{L(A_{k+1}-A_k)}{6A_kA_{k+1}}\|\nabla f(\xi_{k-1}(A_{k+1}))\|\|p^k\|+\frac{L^2(A_{k+1}-A_k)^2}{6A_k^2A_{k+1}^2}\|p^k\|^2 \\
    \le \ &\frac{L(A_{k+1}-A_k)}{6A_kA_{k+1}}\sqrt{\frac{2L\Phi_k}{A_k}}\|p^k\|+\frac{L^2(A_{k+1}-A_k)^2}{3A_k^2A_{k+1}^2}\|p^k\|^2,
\end{align*}
where the last step uses the second inequality in Lemma \ref{lem:fixed-gradient-bounds}.

It remains to bound the second expectation in Lemma \ref{lem:fixed-energy-defect}. Since $x^k=\xi_{k-1}(u_k)$, by applying the second inequality in Lemma \ref{lem:fixed-gradient-bounds} at $s=u_k$ and $(a+b)^2\le2a^2+2b^2$, we obtain
\begin{equation*}
    \|\nabla f(x^k)\|^2\le\frac{4L\Phi_k}{A_k}+\frac{2L^2(A_{k+1}-A_k)^2}{A_k^2A_{k+1}^2}\|p^k\|^2,
\end{equation*}
which further implies
\begin{equation*}
    \frac{(A_{k+1}-A_k)^2}{2LA_{k+1}}\mathbb E[(A_{k+1}-u_k)^2\|\nabla f(x^k)\|^2\mid\mathcal F_k]\le\frac{2(A_{k+1}-A_k)^4}{3A_kA_{k+1}}\Phi_k+\frac{L(A_{k+1}-A_k)^6}{3A_k^2A_{k+1}^3}\|p^k\|^2.
\end{equation*}
We now substitute the above two upper bounds into \eqref{eq:fixed-defect} and get
\begin{equation*}
\begin{aligned}
   &\mathbb E[\Phi_{k+1}\mid\mathcal F_k]-\Phi_k \\
    \le \ &\frac{(A_{k+1}-A_k)^3}{6A_kA_{k+1}}\sqrt{\frac{2L\Phi_k}{A_k}}\|p^k\|+\frac{L(A_{k+1}-A_k)^4}{3A_k^2A_{k+1}^2}\|p^k\|^2+\frac{2(A_{k+1}-A_k)^4}{3A_kA_{k+1}}\Phi_k+\frac{L(A_{k+1}-A_k)^6}{3A_k^2A_{k+1}^3}\|p^k\|^2. 
\end{aligned}
\end{equation*}
Recall that $\theta_k=(A_{k+1}-A_k)/\sqrt{A_k}$. Since $A_{k+1}\ge A_k$, the first term above satisfies
\begin{equation*}
    \frac{(A_{k+1}-A_k)^3}{6A_kA_{k+1}}\sqrt{\frac{2L\Phi_k}{A_k}}\|p^k\|=\theta_k^3\frac{\sqrt{2L\Phi_k}}{6A_{k+1}}\|p^k\|\le\theta_k^3\frac{\sqrt{2L\Phi_k}}{6}\frac{\|p^k\|}{A_k} \le \theta_k^3\Big(\frac{\Phi_k}{2}+\frac{L}{36}\frac{\|p^k\|^2}{A_k^2}\Big).
\end{equation*}
For the rest three terms, we have
\begin{align*}
    \frac{L(A_{k+1}-A_k)^4}{3A_k^2A_{k+1}^2}\|p^k\|^2\le\frac{L}{3}\theta_k^4\frac{\|p^k\|^2}{A_k^2}, \ \frac{2(A_{k+1}-A_k)^4}{3A_kA_{k+1}}\Phi_k\le\frac{2}{3}\theta_k^4\Phi_k, \ \frac{L(A_{k+1}-A_k)^6}{3A_k^2A_{k+1}^3}\|p^k\|^2\le\frac{L}{3}\theta_k^6\frac{\|p^k\|^2}{A_k^2}.
\end{align*}
Therefore, we conclude
\begin{equation*}
    \mathbb E[\Phi_{k+1}\mid\mathcal F_k]-\Phi_k\le\theta_k^3\Big[\Big(\frac12+\frac23\theta_k\Big)\Phi_k+L\Big(\frac1{36}+\frac13\theta_k+\frac13\theta_k^3\Big)\frac{\|p^k\|^2}{A_k^2}\Big].
\end{equation*}
Finally, since $\theta_k\le3/8$, we have $\frac12+\frac23\theta_k\le1$ and $\frac1{36}+\frac13\theta_k+\frac13\theta_k^3<\frac15$. We obatin
\begin{equation*}
    \mathbb E[\Phi_{k+1}\mid\mathcal F_k]-\Phi_k\le\theta_k^3\Phi_k+\frac{L}{5}\theta_k^3\frac{\|p^k\|^2}{A_k^2},
\end{equation*}
and the proof is complete.
\end{proof}

To control the rescaled momentum term in \eqref{eq:fixed-local}, we now bound the optimality distance from $\xi_{k-1}(A_k)$ to $x^\star$.

\begin{lemma}\label{lem:fixed-position-recursion}
For every $0\le k<K$, we have
\begin{equation*}
    \mathbb E[\|\xi_k(A_{k+1})-x^\star\|^2\mid\mathcal F_k]\le\frac{A_k}{A_{k+1}}\|\xi_{k-1}(A_k)-x^\star\|^2+\frac{A_{k+1}-A_k}{LA_{k+1}}\Phi_k+\frac{2(A_{k+1}-A_k)}{LA_{k+1}}\mathbb E[\Phi_{k+1}\mid\mathcal F_k].
\end{equation*}
\end{lemma}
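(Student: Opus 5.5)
The plan is to write $\xi_k(A_{k+1})$ as a convex combination of $\xi_{k-1}(A_k)$ and an auxiliary point, and then use convexity of $\|\cdot-x^\star\|^2$. Abbreviate $y_k\coloneqq\xi_{k-1}(A_k)$ and $z_k\coloneqq\xi_{k-1}(A_k)+p^k/A_k$, so that $\Phi_k=A_k[f(y_k)-f^\star]+\frac L2\|z_k-x^\star\|^2$. Since the function-value term is nonnegative, this gives $\frac L2\|z_k-x^\star\|^2\le\Phi_k$, and likewise for $z_{k+1}$. Both $y_k$ and $z_k$ are $\mathcal F_k$-measurable.

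\textbf{Step 1: decompose the endpoint.} From $\xi_{k-1}(s)-\xi_{k-1}(t)=(\frac1t-\frac1s)p^k$ (proof of Lemma~\ref{lem:fixed-gradient-bounds}) we get
\[
\xi_{k-1}(A_{k+1})=y_k+\frac{A_{k+1}-A_k}{A_kA_{k+1}}p^k=\frac{A_k}{A_{k+1}}y_k+\frac{A_{k+1}-A_k}{A_{k+1}}z_k .
\]
Substitute this into the first identity of \eqref{eq:fixed-boundary}, absorbing the gradient correction into the second point. This yields
\[
\xi_k(A_{k+1})=\frac{A_k}{A_{k+1}}y_k+\frac{A_{k+1}-A_k}{A_{k+1}}w_k,\qquad w_k\coloneqq z_k-\frac{A_{k+1}-u_k}{L}\nabla f(x^k).
\]

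\textbf{Step 2: express $w_k$ through $z_k$ and $z_{k+1}$.} By the second identity of \eqref{eq:fixed-boundary}, $z_{k+1}=z_k-\frac{A_{k+1}-A_k}{L}\nabla f(x^k)$. Setting $\lambda_k\coloneqq(u_k-A_k)/(A_{k+1}-A_k)\in[0,1]$, this gives $w_k=\lambda_k z_k+(1-\lambda_k)z_{k+1}$.

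\textbf{Step 3: apply convexity twice.} Convexity of the squared norm, applied to both convex combinations, gives
\[
\|\xi_k(A_{k+1})-x^\star\|^2\le\frac{A_k}{A_{k+1}}\|y_k-x^\star\|^2+\frac{A_{k+1}-A_k}{A_{k+1}}\Big(\lambda_k\|z_k-x^\star\|^2+(1-\lambda_k)\|z_{k+1}-x^\star\|^2\Big).
\]

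\textbf{Step 4: take conditional expectations.} Since $z_k$ is $\mathcal F_k$-measurable and $\mathbb E[\lambda_k\mid\mathcal F_k]=\frac12$ (Lemma~\ref{lem:technical-interval-calculus}), we have $\mathbb E[\lambda_k\|z_k-x^\star\|^2\mid\mathcal F_k]=\frac12\|z_k-x^\star\|^2\le\Phi_k/L$. For the other term we use $1-\lambda_k\le1$ and $\|z_{k+1}-x^\star\|^2\le 2\Phi_{k+1}/L$, so it is at most $2\mathbb E[\Phi_{k+1}\mid\mathcal F_k]/L$. Multiplying by $(A_{k+1}-A_k)/A_{k+1}$ gives exactly the claimed inequality.

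The only delicate point is getting the coefficient $1$ (rather than $2$) on $\Phi_k$. This is why we keep the random weight $\lambda_k$ inside the expectation and average it against the $\mathcal F_k$-measurable $z_k$, instead of bounding $\lambda_k\le1$.
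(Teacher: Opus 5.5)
Your proof is correct and is essentially the paper's argument. Your nested combination collapses to the same three-point convex combination $\xi_k(A_{k+1})=\frac{A_k}{A_{k+1}}y_k+\frac{u_k-A_k}{A_{k+1}}z_k+\frac{A_{k+1}-u_k}{A_{k+1}}z_{k+1}$ that the paper writes down, followed by the same conditional-expectation step: averaging $u_k-A_k$ to $\frac{A_{k+1}-A_k}{2}$ against the $\mathcal F_k$-measurable $\Phi_k$ term, and bounding $A_{k+1}-u_k\le A_{k+1}-A_k$ for the $\Phi_{k+1}$ term; your Steps 1--2 also usefully spell out the derivation of that identity from \eqref{eq:fixed-boundary}, which the paper only asserts.
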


\begin{proof}
The identities in \eqref{eq:fixed-boundary} imply
\begin{equation*}
    \xi_k(A_{k+1})=\frac{A_k}{A_{k+1}}\xi_{k-1}(A_k)+\frac{u_k-A_k}{A_{k+1}}\Big(\xi_{k-1}(A_k)+\frac{p^k}{A_k}\Big)+\frac{A_{k+1}-u_k}{A_{k+1}}\Big(\xi_k(A_{k+1})+\frac{p^{k+1}}{A_{k+1}}\Big).
\end{equation*}
The coefficients are nonnegative and sum to one. By the convexity of the squared norm and the definition of the potential, we have
\begin{equation*}
    \|\xi_k(A_{k+1})-x^\star\|^2\le\frac{A_k}{A_{k+1}}\|\xi_{k-1}(A_k)-x^\star\|^2+\frac{2(u_k-A_k)}{LA_{k+1}}\Phi_k+\frac{2(A_{k+1}-u_k)}{LA_{k+1}}\Phi_{k+1}.
\end{equation*}
Since $\Phi_k$ is $\mathcal F_k$-measurable and $u_k$ is conditionally uniform on $[A_k,A_{k+1}]$, we have $\mathbb E[(u_k-A_k)\Phi_k\mid\mathcal F_k]=\frac{A_{k+1}-A_k}{2}\Phi_k$. By using the nonnegativity of $\Phi_{k+1}$ and $A_{k+1}-u_k\le A_{k+1}-A_k$, we obtain $\mathbb E[(A_{k+1}-u_k)\Phi_{k+1}\mid\mathcal F_k]\le(A_{k+1}-A_k)\mathbb E[\Phi_{k+1}\mid\mathcal F_k]$. Taking conditional expectations completes the proof.
\end{proof}

\subsection{Convergence guarantees} We now establish the convergence rate in terms of the objective function value gap by combining the potential decrease in Lemma~\ref{lem:fixed-local} with the distance recursion in Lemma~\ref{lem:fixed-position-recursion}.
\begin{lemma}\label{lem:fixed-global}
Suppose $\theta_k\le3/8$ for every $0\le k<K$. For every $0\le k\le K$, we have
\begin{equation*}
    \mathbb E[\Phi_k]\le L\|x^0-x^\star\|^2\exp\Big(3\sum_{\ell=0}^{k-1}\theta_\ell^3\Big), \ \text{and} \ \mathbb E[\|\xi_{k-1}(A_k)-x^\star\|^2]\le3\|x^0-x^\star\|^2\exp\Big(3\sum_{\ell=0}^{k-1}\theta_\ell^3\Big).
\end{equation*}
\end{lemma}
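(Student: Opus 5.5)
The plan is a joint induction on $k$ for the two quantities $\E[\Phi_k]$ and $\E[D_k]$, where $D_k\coloneqq\|\xi_{k-1}(A_k)-x^\star\|^2$. Throughout, write $R\coloneqq\|x^0-x^\star\|$ and $E_k\coloneqq\exp\big(3\sum_{\ell=0}^{k-1}\theta_\ell^3\big)$. Note that $E_k$ is nondecreasing in $k$ and satisfies $(1+3\theta_k^3)E_k\le E_{k+1}$. The claim is that $\E[\Phi_k]\le LR^2E_k$ and $\E[D_k]\le 3R^2E_k$.

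\emph{Base case.} Since $\xi_{-1}\equiv x^0$, $p^0=0$ and $A_0=1$, we have
\[
\Phi_0=f(x^0)-f^\star+\tfrac L2R^2\le LR^2,
\]
using $f(x^0)-f^\star\le\frac L2R^2$. Also $D_0=R^2\le 3R^2$.

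\emph{Step 1: control the momentum term.} The first step is to eliminate the momentum term in Lemma~\ref{lem:fixed-local}. Write
\[
\frac{p^k}{A_k}=\Big(\xi_{k-1}(A_k)+\frac{p^k}{A_k}-x^\star\Big)-\big(\xi_{k-1}(A_k)-x^\star\big).
\]
The definition of $\Phi_k$ gives $\|\xi_{k-1}(A_k)+p^k/A_k-x^\star\|^2\le 2\Phi_k/L$. Then $(a+b)^2\le 2a^2+2b^2$ yields
\[
\frac{\|p^k\|^2}{A_k^2}\le\frac{4\Phi_k}{L}+2D_k .
\]
Substituting this into Lemma~\ref{lem:fixed-local} (valid because $\theta_k\le 3/8$ and $A_k\ge1$) gives
\[
\E[\Phi_{k+1}\mid\cF_k]\le\Big(1+\tfrac95\theta_k^3\Big)\Phi_k+\tfrac{2L}{5}\theta_k^3D_k .
\]

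\emph{Step 2: potential bound.} Take total expectations and apply the induction hypothesis to both terms:
\[
\E[\Phi_{k+1}]\le\Big(1+\tfrac95\theta_k^3+\tfrac65\theta_k^3\Big)LR^2E_k=(1+3\theta_k^3)LR^2E_k\le LR^2E_{k+1}.
\]
The exponent constant $3$ is exactly what makes this step close.

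\emph{Step 3: distance bound.} Take total expectations in Lemma~\ref{lem:fixed-position-recursion}. Use the bound on $\E[\Phi_{k+1}]$ just proven together with the hypothesis for $\E[\Phi_k]$ and $\E[D_k]$. With $\Delta_k=A_{k+1}-A_k$ this gives
\[
\E[D_{k+1}]\le R^2\Big[\frac{3A_k}{A_{k+1}}E_k+\frac{\Delta_k}{A_{k+1}}\big(E_k+2E_{k+1}\big)\Big].
\]
Since $E_k\le E_{k+1}$, the right-hand side is at most
\[
R^2E_{k+1}\,\frac{3A_k+3\Delta_k}{A_{k+1}}=3R^2E_{k+1},
\]
which closes the induction.

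\emph{Main obstacle.} The only delicate point is the ordering within the inductive step. The distance recursion involves $\E[\Phi_{k+1}]$, so the potential bound must be established before the distance bound at each step. In addition, the constants $1$ and $3$ must match exactly: the potential step needs $\frac95+\frac25\cdot 3=3$, and the distance step needs $3A_k+\Delta_k+2\Delta_k=3A_{k+1}$. I would verify these two arithmetic identities first; everything else is routine.
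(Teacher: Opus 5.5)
Your proposal is correct and follows the paper's proof essentially step for step: the same joint induction, the same split of $p^k/A_k$, the same use of Lemma~\ref{lem:fixed-local} and then Lemma~\ref{lem:fixed-position-recursion} (potential first, then distance), and the same closing arithmetic. The only cosmetic difference is that you substitute the pathwise bound $\|p^k\|^2/A_k^2\le 4\Phi_k/L+2D_k$ into the conditional inequality before taking expectations, whereas the paper first bounds $\E[\|p^k\|^2/A_k^2]\le 10\|x^0-x^\star\|^2E_k$; both routes give the same factor $1+3\theta_k^3$.
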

\begin{proof}
We prove the results by induction on $k$. Since $A_0=1$, $\xi_{-1}(A_0)=x^0$, and $p^0=0$, we have $\Phi_0=f(x^0)-f^\star+\frac L2\|x^0-x^\star\|^2\le L\|x^0-x^\star\|^2$. Also, $\|\xi_{-1}(A_0)-x^\star\|^2\le \|x^0-x^\star\|^2\le3\|x^0-x^\star\|^2$. Thus, the base case holds at $k=0$. Now suppose both bounds hold at some $0\le k<K$. Note that $\frac{p^k}{A_k} = (\xi_{k-1}(A_k)+\frac{p^k}{A_k}-x^\star) - (\xi_{k-1}(A_k)-x^\star)$. We then have
\begin{equation*}
\begin{aligned}
    \mathbb E\Big[\frac{\|p^k\|^2}{A_k^2}\Big]
    \le \ &2\mathbb E[\|\xi_{k-1}(A_k)+\frac{p^k}{A_k}-x^\star\|^2]+2\mathbb E[\|\xi_{k-1}(A_k)-x^\star\|^2]\\
    \le \ &\frac4L\mathbb E[\Phi_k]+2\mathbb E[\|\xi_{k-1}(A_k)-x^\star\|^2] \\
    \le\ &10\|x^0-x^\star\|^2\exp\Big(3\sum_{\ell=0}^{k-1}\theta_\ell^3\Big). 
\end{aligned}
\end{equation*}
By taking expectations in Lemma~\ref{lem:fixed-local} and using the induction hypothesis, we obtain
\begin{align*}
    \mathbb E[\Phi_{k+1}]\le (1+\theta_k^3)\mathbb E[\Phi_k]+\frac L5\theta_k^3\mathbb E\Big[\frac{\|p^k\|^2}{A_k^2}\Big] \le L\|x^0-x^\star\|^2(1+3\theta_k^3)\exp\Big(3\sum_{\ell=0}^{k-1}\theta_\ell^3\Big).
\end{align*}
Since $1+3\theta_k^3\le \ee^{3\theta_k^3}$, it follows that $\mathbb E[\Phi_{k+1}]\le L\|x^0-x^\star\|^2\exp\Big(3\sum_{\ell=0}^{k}\theta_\ell^3\Big)$, which proves the potential bound at iteration $k+1$. For the optimality distance at that iteration, taking expectations in Lemma~\ref{lem:fixed-position-recursion} gives
\begin{align*}
    \mathbb E[\|\xi_k(A_{k+1})-x^\star\|^2]
    \le \ &\frac{A_k}{A_{k+1}}\mathbb E[\|\xi_{k-1}(A_k)-x^\star\|^2]+\frac{A_{k+1}-A_k}{LA_{k+1}}\mathbb E[\Phi_k]+\frac{2(A_{k+1}-A_k)}{LA_{k+1}}\mathbb E[\Phi_{k+1}]\\
    \le \ &\frac{3A_k+3(A_{k+1}-A_k)}{A_{k+1}}\|x^0-x^\star\|^2\exp\Big(3\sum_{\ell=0}^{k}\theta_\ell^3\Big) \\
    = \ &3\|x^0-x^\star\|^2\exp\Big(3\sum_{\ell=0}^{k}\theta_\ell^3\Big),
\end{align*}
where we have used the induction hypothesis at iteration $k$, $\mathbb E[\Phi_k]\le L\|x^0-x^\star\|^2\exp\Big(3\sum_{\ell=0}^{k-1}\theta_\ell^3\Big)$, and the nonnegativity of $\theta_k^3$. The proof is complete.
\end{proof}

Note that $x^K=\xi_{K-1}(A_K)$ since $u_K=A_K$. The definition of the potential function \eqref{eq:fixed-energy} yields $A_K[f(x^K)-f^\star]\le\Phi_K$. We thus have
\begin{equation}\label{eq:fixed-general-weights}
    \mathbb E[f(x^K)-f^\star]\le\frac{\mathbb E[\Phi_K]}{A_K}\le\frac{L\|x^0-x^\star\|^2}{A_K}\exp\Big(3\sum_{k=0}^{K-1}\theta_k^3\Big).
\end{equation}
The bound \eqref{eq:fixed-general-weights} suggests making $A_K$ large while keeping $\sum_{k=0}^{K-1}\theta_k^3$ bounded. To relate these two requirements, note that
\begin{equation*}
    \theta_k=\frac{\sqrt{A_{k+1}}+\sqrt{A_k}}{\sqrt{A_k}}\big(\sqrt{A_{k+1}}-\sqrt{A_k}\big)\ge2\big(\sqrt{A_{k+1}}-\sqrt{A_k}\big).
\end{equation*}
By the convexity of the cubic function, it follows that
\begin{equation*}
    \sum_{k=0}^{K-1}\theta_k^3\ge8\sum_{k=0}^{K-1}\big(\sqrt{A_{k+1}}-\sqrt{A_k}\big)^3\ge\frac8{K^2}\Big(\sum_{k=0}^{K-1}\big(\sqrt{A_{k+1}}-\sqrt{A_k}\big)\Big)^3=\frac8{K^2}\big(\sqrt{A_K}-1\big)^3.
\end{equation*}
Therefore, a uniform bound on $\sum_{k=0}^{K-1}\theta_k^3$ necessarily requires $(\sqrt{A_K}-1)^3/K^2 = \cO(1)$, and hence $A_K = \cO(K^{4/3})$. This is a restriction of the sufficient error-budget condition in \eqref{eq:fixed-general-weights}, not a lower bound for every predefined schedule or every deterministic choice of time boundaries.

\begin{theorem}\label{thm:fixed}
Let $f$ be convex and $L$-smooth, and let $x^\star$ be a minimizer. Fix the total iteration count $K\ge1$, and choose $A_k=\big(1+\frac{k}{6K^{1/3}}\big)^2$. Independently sample $u_k\sim\operatorname{Unif}(A_k,A_{k+1})$ for $0\le k<K$, set $u_K=A_K$, and use the schedule \eqref{eq:fixed-parameters}. Then the heavy-ball method satisfies
\begin{equation*}
    \mathbb E[f(x^K)-f^\star]\le\frac{36\ee^{1/7}L\|x^0-x^\star\|^2}{K^{4/3}}.
\end{equation*}
\end{theorem}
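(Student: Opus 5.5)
The plan is to apply the general bound \eqref{eq:fixed-general-weights} directly. It requires only two facts. First, the hypothesis $\theta_k\le 3/8$ of Lemma~\ref{lem:fixed-global} must hold for every $0\le k<K$. Second, the resulting error budget $3\sum_{k=0}^{K-1}\theta_k^3$ must be at most $1/7$, while $A_K\ge K^{4/3}/36$. Both reduce to elementary computations with the specific boundaries $A_k=(1+k/c)^2$, where $c\coloneqq 6K^{1/3}\ge 6$.

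\emph{Step 1: bound $\theta_k$ uniformly.} Since $\sqrt{A_{k+1}}-\sqrt{A_k}=1/c$, I will write
\[
\theta_k=\frac{(\sqrt{A_{k+1}}-\sqrt{A_k})(\sqrt{A_{k+1}}+\sqrt{A_k})}{\sqrt{A_k}}=\frac1c\Big(2+\frac{1}{c\sqrt{A_k}}\Big)\le\frac1c\Big(2+\frac1c\Big),
\]
where the last step uses $A_k\ge1$. Because $c\ge6$, this gives $\theta_k\le \frac{13/6}{6}=\frac{13}{36}<\frac38$. So Lemma~\ref{lem:fixed-global} applies, and \eqref{eq:fixed-general-weights} holds.

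\emph{Step 2: bound the error budget.} Cubing the bound from Step 1 gives $\theta_k^3\le (2+1/c)^3/(216K)$. Summing the $K$ terms yields
\[
3\sum_{k=0}^{K-1}\theta_k^3\le \frac{3(13/6)^3}{216}=\frac{2197}{15552}<\frac17.
\]
This is the step with the least slack: $2197/15552\approx 0.1413$, compared with $1/7\approx0.1429$. It is therefore the place where I would carefully recheck the arithmetic, in particular that $c\ge 6$ uses $K\ge1$. The step carries no conceptual difficulty.

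\emph{Step 3: lower-bound $A_K$ and conclude.} We have $A_K=(1+K^{2/3}/6)^2\ge K^{4/3}/36$. Substituting this and Step 2 into \eqref{eq:fixed-general-weights} gives
\[
\mathbb E[f(x^K)-f^\star]\le \frac{36\,\ee^{1/7}L\|x^0-x^\star\|^2}{K^{4/3}},
\]
which is the claimed bound. The schedule \eqref{eq:fixed-parameters} coincides with \eqref{eq:fixed-coefficients}, and $x^K=\xi_{K-1}(A_K)$ because $u_K=A_K$. Hence no further identification of the iterates is needed.
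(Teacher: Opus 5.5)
Your proposal is correct and is essentially the paper's proof: the paper likewise verifies (in Lemma~\ref{lem:technical-fixed-grid}) that $\theta_k=\frac{1}{3K^{1/3}}+\frac{1}{36K^{2/3}\sqrt{A_k}}\le\frac{13}{36K^{1/3}}<\frac38$, that $3\sum_{k=0}^{K-1}\theta_k^3\le 3K\big(\frac{13}{36K^{1/3}}\big)^3<\frac17$, and that $A_K\ge K^{4/3}/36$, and then substitutes these into \eqref{eq:fixed-general-weights}. Writing $c=6K^{1/3}$ and using $\sqrt{A_{k+1}}-\sqrt{A_k}=1/c$ is only a notational variant of the same computation, and it gives the same constant $2197/15552<1/7$.
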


\begin{proof}
The fixed-time grid properties collected in Lemma~\ref{lem:technical-fixed-grid} give
$\theta_k<3/8$ for every $0\le k<K$, $3\sum_{k=0}^{K-1}\theta_k^3<1/7$, and
$A_K\ge K^{4/3}/36$. Hence the condition of Lemma~\ref{lem:fixed-local} is satisfied, and \eqref{eq:fixed-general-weights} gives
\begin{align*}
    \mathbb E[f(x^K)-f^\star] \le \frac{L\|x^0-x^\star\|^2}{A_K}\exp\Big(3\sum_{k=0}^{K-1}\theta_k^3\Big) \le \frac{36\ee^{1/7}L\|x^0-x^\star\|^2}{K^{4/3}}.
\end{align*}
which completes the proof.
\end{proof}

\section{Deterministic Time Boundaries: Anytime Convergence}\label{sec:anytime-proof}
We now retain deterministic boundaries but require a single infinite schedule. The parameterization and potential are strengthened to control the error from each boundary to the sampled iterate. The construction in this section proves the anytime $\cO(1/K^{4/3})$ guarantee. The same curve, coefficient formula, and potential function will also be used in Section~\ref{sec:random-boundaries}, where the boundaries themselves are randomized.

\subsection{Schedule and interpolation} Fix an iteration $k$ and consider the update after the gradient at $x^k$ is evaluated and the rescaled momentum is updated to $p^{k+1}$. As in the fixed-time construction, we represent the iterate update by an auxiliary twice-differentiable curve $\xi_k(s)$ starting from $x^k$ at time $u_k>0$ and moving along the direction $p^{k+1}$. Instead of requiring $\xi_k(s)+s\xi_k'(s)$ to be constant, we choose the parameterization so that
\begin{equation*}
    \frac{\dd}{\dd s}\bigl(\xi_k(s)+s\xi_k'(s)\bigr)=-\xi_k'(s).
\end{equation*}
Under this choice, the sum of the two squared distances satisfies
\begin{equation}\label{eq:two-squared-distances}
\begin{aligned}
    &\frac{\dd}{\dd s}\frac{L}{2}\bigl[\|\xi_k(s)-x^\star\|^2+\|\xi_k(s)+s\xi_k'(s)-x^\star\|^2\bigr] \\
    = \ &L\langle\xi_k(s)-x^\star,\xi_k'(s)\rangle-L\langle\xi_k(s)+s\xi_k'(s)-x^\star,\xi_k'(s)\rangle \\
    = \ &-Ls\|\xi_k'(s)\|^2.
\end{aligned}
\end{equation}
Since the curve moves along the fixed direction $p^{k+1}$, there exists a scalar function $a_k(s)$ such that $\xi_k'(s)=a_k(s)p^{k+1}$. Consequently, the required identity becomes
\begin{align*}
    3\xi_k'(s)+s\xi_k''(s)=\bigl(3a_k(s)+sa_k'(s)\bigr)p^{k+1}=0.
\end{align*}
It therefore suffices to choose $sa_k'(s)+3a_k(s)=0$. We normalize the direction by choosing the solution $a_k(s)=s^{-3}$. Integrating $\xi_k'(s)=p^{k+1}/s^3$ from $u_k$ yields
\begin{equation}\label{eq:anytime-curve}
    \xi_k(s)=x^k+\frac12\Big(\frac1{u_k^2}-\frac1{s^2}\Big)p^{k+1}, \ \forall s\ge u_k.
\end{equation}

We choose where to evaluate the next gradient along the curve $\xi_k(\cdot)$. Integrating \eqref{eq:integral} over $[A_{k+1},A_{k+2}]$ gives
\begin{align*}
    A_{k+2}[f(\xi_k(A_{k+2}))-f^\star]-A_{k+1}[f(\xi_k(A_{k+1}))-f^\star] \le \int_{A_{k+1}}^{A_{k+2}}\langle\nabla f(\xi_k(s)),\xi_k(s)+\frac{p^{k+1}}{s^2}-x^\star\rangle\dd s.
\end{align*}
By sampling $u_{k+1}\sim\operatorname{Unif}(A_{k+1},A_{k+2})$ independently of the previously sampled times, we update
\begin{align*}
    x^{k+1}=\xi_k(u_{k+1})=x^k+\frac12\Big(\frac1{u_k^2}-\frac1{u_{k+1}^2}\Big)p^{k+1}.
\end{align*}
Comparing the above identity with \eqref{eq:rescaled-two-step} gives $\delta_k=\frac12(\frac1{u_k^2}-\frac1{u_{k+1}^2})$. We also obtain
\begin{align*}
    \int_{A_{k+1}}^{A_{k+2}}\langle\nabla f(\xi_k(s)),\xi_k(s)+\frac{p^{k+1}}{s^2}-x^\star\rangle\dd s = (A_{k+2}-A_{k+1})\mathbb E[\langle\nabla f(x^{k+1}),x^{k+1}+\frac{p^{k+1}}{u_{k+1}^2}-x^\star\rangle\mid \cF_{k+1}].
\end{align*}
We now determine the next stepsize. At $x^{k+1}$, the rescaled momentum update implies
\begin{equation*}
    x^{k+1}+\frac{p^{k+2}}{u_{k+1}^2}-x^\star
    =x^{k+1}+\frac{p^{k+1}}{u_{k+1}^2}-x^\star-\frac{\eta_{k+1}}{u_{k+1}^2\delta_{k+1}}\nabla f(x^{k+1}).
\end{equation*}
Squaring both sides and rearranging yields
\begin{align*}
    &\langle\nabla f(x^{k+1}),x^{k+1}+\frac{p^{k+1}}{u_{k+1}^2}-x^\star\rangle
    -\frac{\eta_{k+1}}{2u_{k+1}^2\delta_{k+1}}\|\nabla f(x^{k+1})\|^2 \\
    = \ &\frac{u_{k+1}^2\delta_{k+1}}{2\eta_{k+1}}\Big[\|x^{k+1}+\frac{p^{k+1}}{u_{k+1}^2}-x^\star\|^2
    -\|x^{k+1}+\frac{p^{k+2}}{u_{k+1}^2}-x^\star\|^2\Big].
\end{align*}
To cancel the sampled inner product with the change in the squared distance, we set
\begin{align*}
    \frac{(A_{k+2}-A_{k+1})u_{k+1}^2\delta_{k+1}}{2\eta_{k+1}}=\frac{L}{2},
\end{align*}
which yields the stepsize selection. 

Under the above development, for every $k\ge0$, independently sample $u_k\sim\operatorname{Unif}(A_k,A_{k+1})$ and use the schedule
\begin{equation}\label{eq:anytime-parameters}
    \delta_k=\frac12\Big(\frac1{u_k^2}-\frac1{u_{k+1}^2}\Big)>0, \ \eta_k=\frac{(A_{k+1}-A_k)u_k^2\delta_k}{L}, \ \beta_k=\frac{\delta_k}{\delta_{k-1}}\ (k\ge1).
\end{equation}
All $\delta_k$ are positive almost surely. These parameters give exactly the coefficients in \eqref{eq:convex-coefficients}. The rescaled momentum update simplifies to
\begin{equation}\label{eq:anytime-rescaled-momentum}
    p^{k+1}=p^k-\frac{(A_{k+1}-A_k)u_k^2}{L}\nabla f(x^k).
\end{equation}
Define $\xi_{-1}(s)=x^0$ for $s\ge A_0$. Then $x^k=\xi_{k-1}(u_k)$ for every $k\ge0$. 

\subsection{Potential function decrease} The fixed-time proof controls the accumulated increase in the potential over a fixed number of iterations. For an anytime guarantee, we instead seek a decrease on each interval that controls the remaining rescaled momentum term in \eqref{eq:fixed-local}. Therefore, we include the squared distance of the iterate itself in the potential. We introduce the potential function over the interval $[A_k,A_{k+1}]$ as follows
\begin{equation}\label{eq:anytime-energy}
    \Psi_k=A_k[f(\xi_{k-1}(A_k))-f^\star]+\frac{L}{2}\|\xi_{k-1}(A_k)-x^\star\|^2 +\frac{L}{2}\|\xi_{k-1}(A_k)+\frac{p^k}{A_k^2}-x^\star\|^2.
\end{equation}
For $k\ge1$, recall that
\begin{align*}
    \xi_{k-1}(s)=\ &x^{k-1}+\frac12\Big(\frac1{u_{k-1}^2}-\frac1{s^2}\Big)p^k,\\
    x^k=\xi_{k-1}(u_k)=\ &x^{k-1}+\frac12\Big(\frac1{u_{k-1}^2}-\frac1{u_k^2}\Big)p^k.
\end{align*}
Subtracting the second identity from the first gives $\xi_{k-1}(s)=x^k+\frac12(\frac1{u_k^2}-\frac1{s^2})p^k$. This identity also holds at $k=0$ since $p^0=0$. Together with \eqref{eq:anytime-curve} and \eqref{eq:anytime-rescaled-momentum}, we obtain
\begin{align*}
    \xi_k(s)-\xi_{k-1}(s)
    =\frac12\Big(\frac1{u_k^2}-\frac1{s^2}\Big)(p^{k+1}-p^k)
    =-\frac{A_{k+1}-A_k}{2L}\Big(1-\Big(\frac{u_k}{s}\Big)^2\Big)\nabla f(x^k), \ \forall s\ge u_k.
\end{align*}
Consequently, the two endpoint corrections satisfy
\begin{equation}\label{eq:anytime-boundary}
\begin{aligned}
    \xi_k(A_{k+1})=\ &\xi_{k-1}(A_{k+1})-\frac{A_{k+1}-A_k}{2L}\Big(1-\Big(\frac{u_k}{A_{k+1}}\Big)^2\Big)\nabla f(x^k),\\
    \xi_k(A_{k+1})+\frac{p^{k+1}}{A_{k+1}^2}=\ &\xi_{k-1}(A_{k+1})+\frac{p^k}{A_{k+1}^2}-\frac{A_{k+1}-A_k}{2L}\Big(1+\Big(\frac{u_k}{A_{k+1}}\Big)^2\Big)\nabla f(x^k).
\end{aligned}
\end{equation}
Similarly, Figure~\ref{fig:anytime-correction} illustrates the endpoint corrections at \(A_{k+1}\) induced by the gradient update at \(u_k\).

\begin{figure}[t]
    \centering
    \resizebox{\textwidth}{!}{
\begin{tikzpicture}[x=1cm,y=1cm,font=\small,>=Latex]
    \definecolor{hbblue}{RGB}{35,105,166}
    \definecolor{hborange}{RGB}{199,104,30}
    \definecolor{hbgray}{RGB}{143,155,166}
    \definecolor{hblight}{RGB}{242,245,248}
    \tikzset{
        old/.style={hbblue,line width=1.05pt},
        new/.style={hborange,line width=1.05pt},
        guide/.style={hbgray!65,line width=.45pt},
        dot/.style={circle,fill=black!85,inner sep=1.65pt},
        oldsquare/.style={rectangle,fill=hbblue,inner sep=1.65pt},
        newsquare/.style={rectangle,fill=hborange,inner sep=1.65pt}
    }

    \path[use as bounding box] (0,-.62) rectangle (17.1,4.95);

    \begin{scope}
        \fill[hblight] (.8,.1) rectangle (6.9,3.9);
        \draw[guide] (.8,.1)--(.8,3.9);
        \draw[guide] (6.9,.1)--(6.9,3.9);
        \draw[guide,densely dashed] (3.728,.1)--(3.728,2.72);
        \draw[guide,->] (.35,.1)--(7.55,.1);

        \foreach \x/\lab in {.8/A_k,3.728/u_k,6.9/A_{k+1}}{
            \draw[guide] (\x,.1)--(\x,-.02);
            \node[below] at (\x,-.05) {$\lab$};
        }
        \node[anchor=west] at (7.43,.37) {$s$};

        \draw[old,domain=1:1.48,samples=60,variable=\t]
            plot ({.8+6.1*(\t-1)},{.45+2.30*(1-1/(\t*\t))/2});
        \draw[old,dashed,domain=1.48:2,samples=60,variable=\t]
            plot ({.8+6.1*(\t-1)},{.45+2.30*(1-1/(\t*\t))/2});
        \draw[new,domain=1.48:2,samples=60,variable=\t]
            plot ({.8+6.1*(\t-1)},
                 {.45+2.30*(1-1/(1.48*1.48))/2
                  +1.10*(1/(1.48*1.48)-1/(\t*\t))/2});

        \node[oldsquare] at (.8,.45) {};
        \node[dot] at (3.728,{.45+2.30*(1-1/(1.48*1.48))/2}) {};
        \node[rectangle,draw=hbblue,fill=white,line width=.9pt,inner sep=1.55pt]
            at (6.9,{.45+2.30*(1-1/4)/2}) {};
        \node[newsquare]
            at (6.9,{.45+2.30*(1-1/(1.48*1.48))/2 + 1.10*(1/(1.48*1.48)-1/4)/2}) {};

        \node[text=hbblue,fill=hblight,inner sep=1.2pt] at (1.85,1.92) {$\xi_{k-1}(s)$};
        \draw[guide] (2.10,1.78)--(2.55,1.33);

        \node[text=hborange,fill=hblight,inner sep=1.2pt] at (5.05,.82) {$\xi_k(s)$};
        \draw[guide] (5.16,.90)--(5.40,1.23);

        \node[fill=hblight,inner sep=1.2pt] at (3.78,3.00) {$x^k=\xi_{k-1}(u_k)$};
        \draw[guide] (3.78,2.76)--(3.728,{.45+2.30*(1-1/(1.48*1.48))/2+.05});

        \draw[hborange,->,line width=.8pt]
            (7.16,{.45+2.30*(1-1/4)/2-.02})--
            (7.16,{.45+2.30*(1-1/(1.48*1.48))/2 + 1.10*(1/(1.48*1.48)-1/4)/2+.02});

        \node[text=hborange,align=center] at (3.95,4.42)
            {$-\dfrac{A_{k+1}-A_k}{2L}
            \Bigl(1-\bigl(\dfrac{u_k}{A_{k+1}}\bigr)^2\Bigr)\nabla f(x^k)$};
        \draw[guide] (6.95,3.50)--(7.32,3.02)--(7.16,2.12);
    \end{scope}

    \begin{scope}[xshift=8.75cm]
        \fill[hblight] (.8,.1) rectangle (6.9,3.9);
        \draw[guide] (.8,.1)--(.8,3.9);
        \draw[guide] (6.9,.1)--(6.9,3.9);
        \draw[guide,densely dashed] (3.728,.1)--(3.728,3.45);
        \draw[guide,->] (.35,.1)--(7.55,.1);

        \foreach \x/\lab in {.8/A_k,3.728/u_k,6.9/A_{k+1}}{
            \draw[guide] (\x,.1)--(\x,-.02);
            \node[below] at (\x,-.05) {$\lab$};
        }
        \node[anchor=west] at (7.43,.37) {$s$};

        \draw[old,domain=1:1.48,samples=60,variable=\t]
            plot ({.8+6.1*(\t-1)},{.45+1.15+1.15/(\t*\t)});
        \draw[old,dashed,domain=1.48:2,samples=60,variable=\t]
            plot ({.8+6.1*(\t-1)},{.45+1.15+1.15/(\t*\t)});
        \draw[new,domain=1.48:2,samples=60,variable=\t]
            plot ({.8+6.1*(\t-1)},
                 {.45+2.30*(1-1/(1.48*1.48))/2
                  +0.55/(1.48*1.48)+0.55/(\t*\t)});

        \node[oldsquare] at (.8,{.45+1.15+1.15}) {};
        \node[circle,draw=hbblue,fill=white,line width=.9pt,inner sep=1.55pt]
            at (3.728,{.45+1.15+1.15/(1.48*1.48)}) {};
        \node[circle,fill=hborange,inner sep=1.65pt]
            at (3.728,{.45+2.30*(1-1/(1.48*1.48))/2 + 0.55/(1.48*1.48)+0.55/(1.48*1.48)}) {};
        \node[rectangle,draw=hbblue,fill=white,line width=.9pt,inner sep=1.55pt]
            at (6.9,{.45+1.15+1.15/4}) {};
        \node[newsquare]
            at (6.9,{.45+2.30*(1-1/(1.48*1.48))/2 + 0.55/(1.48*1.48)+0.55/4}) {};

        \node[text=hbblue,align=center,fill=hblight,inner sep=1.2pt] at (1.90,3.06)
            {$\xi_{k-1}(s)+\dfrac{p^k}{s^2}$};
        \draw[guide] (2.55,2.92)--(3.05,2.70);

        \node[text=hborange,align=center,fill=hblight,inner sep=1.2pt] at (5.22,0.92)
            {$\xi_k(s)+\dfrac{p^{k+1}}{s^2}$};
        \draw[guide] (5.24,1.08)--(5.52,1.62);

        \node[text=hborange,align=center,fill=hblight,inner sep=1.2pt] at (2.80,2.25)
            {gradient\\update};
        \draw[guide] (3.12,2.16)--(3.55,2.05);

        \draw[hborange,->,line width=.8pt]
            (7.16,{.45+1.15+1.15/4-.02})--
            (7.16,{.45+2.30*(1-1/(1.48*1.48))/2 + 0.55/(1.48*1.48)+0.55/4+.02});

        \node[text=hborange,align=center] at (4.70,4.42)
            {$-\dfrac{A_{k+1}-A_k}{2L}
            \Bigl(1+\bigl(\dfrac{u_k}{A_{k+1}}\bigr)^2\Bigr)\nabla f(x^k)$};
        \draw[guide] (6.95,3.48)--(7.30,3.02)--(7.16,1.98);
    \end{scope}
\end{tikzpicture}}
    \caption{The two endpoint corrections in the anytime analysis on $[A_k,A_{k+1}]$. The blue dashed curves extend the incoming curve $\xi_{k-1}$ beyond $u_k$ without applying the gradient update, and the orange curves represent the outgoing curve $\xi_k$ after the update.}
    \label{fig:anytime-correction}
\end{figure}

\begin{lemma}\label{lem:anytime-gradient-bounds}
For every $s\in[A_k,A_{k+1}]$, we have
\begin{align*}
    \|\nabla f(\xi_{k-1}(s))-\nabla f(\xi_{k-1}(A_{k+1}))\|
    &\le\frac{L(A_{k+1}-s)}{A_k^3}\|p^k\|,\\
    \|\nabla f(\xi_{k-1}(A_{k+1}))\|
    &\le\big(\mathbb E[\|\nabla f(x^k)\|^2\mid\mathcal F_k]\big)^{1/2}+\frac{L(A_{k+1}-A_k)}{A_k^3}\|p^k\|.
\end{align*}
\end{lemma}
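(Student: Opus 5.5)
Both bounds come from the explicit form of the incoming curve in the anytime parameterization, $\xi_{k-1}(s)=x^k+\tfrac12(u_k^{-2}-s^{-2})p^k$, i.e.\ $\xi_{k-1}'(s)=p^k/s^3$. For the first inequality we take the difference of two points on the curve,
\begin{equation*}
\xi_{k-1}(s)-\xi_{k-1}(t)=\tfrac12\big(t^{-2}-s^{-2}\big)p^k=\int_t^s \frac{p^k}{r^3}\dd r ,
\end{equation*}
and use $r^{-3}\le A_k^{-3}$ for $r\in[A_k,A_{k+1}]$. Setting $t=A_{k+1}$ gives $\|\xi_{k-1}(s)-\xi_{k-1}(A_{k+1})\|\le (A_{k+1}-s)\|p^k\|/A_k^3$. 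Lipschitz continuity of $\nabla f$ then yields the first bound, exactly as in the proof of Lemma~\ref{lem:fixed-gradient-bounds}. The only change from that lemma is the $s^{-3}$ speed in place of $s^{-2}$.

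For the second inequality we cannot bound $\|\nabla f(\xi_{k-1}(A_{k+1}))\|$ through a potential value, since the claim is in terms of a conditional second moment of the sampled gradient. So we average over $u_k$. By the triangle inequality,
\begin{equation*}
\|\nabla f(\xi_{k-1}(A_{k+1}))\|\le\|\nabla f(x^k)\|+\|\nabla f(x^k)-\nabla f(\xi_{k-1}(A_{k+1}))\|.
\end{equation*}
Since $x^k=\xi_{k-1}(u_k)$ with $u_k\in[A_k,A_{k+1}]$, the first bound gives pointwise
\begin{equation*}
\|\nabla f(\xi_{k-1}(A_{k+1}))\|\le\|\nabla f(x^k)\|+L(A_{k+1}-A_k)\|p^k\|/A_k^3 .
\end{equation*}
The left-hand side and the last term are $\mathcal F_k$-measurable: $p^k$ and the curve $\xi_{k-1}$ depend only on $u_0,\dots,u_{k-1}$. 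Taking conditional expectations given $\mathcal F_k$ and applying Jensen's inequality, $\E[\|\nabla f(x^k)\|\mid\mathcal F_k]\le(\E[\|\nabla f(x^k)\|^2\mid\mathcal F_k])^{1/2}$, gives the claim. Alternatively, one can use the $L^2(\mathcal F_k)$ triangle (Minkowski) inequality directly. Either way, $A_{k+1}-u_k\le A_{k+1}-A_k$ bounds the correction term.

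No step is a serious obstacle. The only point needing care is the measurability bookkeeping: $\xi_{k-1}(A_{k+1})$ and $p^k$ must be fixed under conditioning on $\mathcal F_k$, while $x^k$ is random through $u_k$. This holds because $\xi_{k-1}(s)=x^{k-1}+\tfrac12(u_{k-1}^{-2}-s^{-2})p^k$ involves only earlier samples. At $k=0$ we have $p^0=0$ and $\xi_{-1}\equiv x^0$, so both inequalities hold trivially.
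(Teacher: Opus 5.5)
Your proposal is correct and follows essentially the same route as the paper. The first bound comes from the curve difference $\tfrac12(t^{-2}-s^{-2})p^k$ together with Lipschitz continuity of $\nabla f$. The second comes from the pointwise triangle bound at $s=u_k$, the $\mathcal F_k$-measurability of $\xi_{k-1}(A_{k+1})$ and $p^k$, and Jensen's (equivalently Cauchy--Schwarz) inequality applied to the conditional expectation.
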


\begin{proof}
The definition of the curve gives $\xi_{k-1}(s)-\xi_{k-1}(t)=\frac12(\frac1{t^2}-\frac1{s^2})p^k$. Hence, by the Lipschitz continuity of the gradient, for every $s,t\in[A_k,A_{k+1}]$, we have
\begin{equation*}
    \|\nabla f(\xi_{k-1}(s))-\nabla f(\xi_{k-1}(t))\| \le\frac{L}{2}\Big|\frac1{s^2}-\frac1{t^2}\Big| \cdot \|p^k\|
    \le\frac{L|s-t|}{A_k^3}\|p^k\|.
\end{equation*}
Taking $t=A_{k+1}$ proves the first inequality. For the second bound, since $x^k=\xi_{k-1}(u_k)$ and $u_k\in[A_k,A_{k+1}]$, we obtain
\begin{equation*}
    \|\nabla f(\xi_{k-1}(A_{k+1}))\|
    \le\|\nabla f(x^k)\|+\frac{L(A_{k+1}-A_k)}{A_k^3}\|p^k\|.
\end{equation*}
The endpoint gradient and $p^k$ are $\mathcal F_k$-measurable. Taking conditional expectations and applying the Cauchy--Schwarz inequality proves the second bound. The proof is complete.
\end{proof}

\begin{lemma}\label{lem:anytime-energy-defect}
For every $k\ge0$, the potential function \eqref{eq:anytime-energy} satisfies
\begin{equation}\label{eq:anytime-defect}
\begin{aligned}
    &\mathbb E[\Psi_{k+1}\mid\mathcal F_k]-\Psi_k\\
    \le\ &-\frac{A_{k+1}-A_k}{2L}\mathbb E[\|\nabla f(x^k)\|^2\mid\mathcal F_k]
    -\frac{L}{4}\Big(\frac1{A_k^4}-\frac1{A_{k+1}^4}\Big)\|p^k\|^2\\
    &+\frac{(A_{k+1}-A_k)^2}{L}\mathbb E\Big[\frac12\|\nabla f(x^k)\|^2-\frac{A_{k+1}-u_k}{A_{k+1}-A_k}\langle\nabla f(\xi_{k-1}(A_{k+1})),\nabla f(x^k)\rangle\mid\mathcal F_k\Big]\\
    &+\frac{2(A_{k+1}-A_k)^2}{A_k^3}\|p^k\| \cdot \mathbb E[\|\nabla f(x^k)\|\mid\mathcal F_k]+\frac{(A_{k+1}-A_k)^4}{2LA_k}\mathbb E[\|\nabla f(x^k)\|^2\mid\mathcal F_k]\\
    &+\frac{(A_{k+1}-A_k)^3}{LA_k}\|\nabla f(\xi_{k-1}(A_{k+1}))\| \cdot \mathbb E[\|\nabla f(x^k)\|\mid\mathcal F_k].
\end{aligned}
\end{equation}
\end{lemma}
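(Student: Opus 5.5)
The plan is to track the three pieces of $\Psi_k$ separately. Each piece is moved first along the incoming curve $\xi_{k-1}$ from $A_k$ to $A_{k+1}$ (a continuous-time step), and then corrected at $A_{k+1}$ from $\xi_{k-1}$ to $\xi_k$ using \eqref{eq:anytime-boundary} (a jump). Throughout, write $\Delta=A_{k+1}-A_k$, $g=\nabla f(x^k)$, $r=u_k/A_{k+1}$, $c_1=\frac{\Delta}{2L}(1-r^2)$ and $c_2=\frac{\Delta}{2L}(1+r^2)$.

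\emph{Function term.} Instead of plain convexity in \eqref{eq:integral}, I will use the interpolation inequality \eqref{eq:cocoercivity-inequality} with $y=x^\star$:
\[
f(x)-f^\star\le\langle\nabla f(x),x-x^\star\rangle-\tfrac1{2L}\|\nabla f(x)\|^2 .
\]
Along $\xi_{k-1}$ we have $\xi_{k-1}(s)+s\xi_{k-1}'(s)=\xi_{k-1}(s)+p^k/s^2$. Integrating over $[A_k,A_{k+1}]$ and applying the uniform-sampling identity with $x^k=\xi_{k-1}(u_k)$ gives
\[
A_{k+1}[f(\xi_{k-1}(A_{k+1}))-f^\star]-A_k[f(\xi_{k-1}(A_k))-f^\star]\le\Delta\,\E\Big[\Big\langle g,x^k+\tfrac{p^k}{u_k^2}-x^\star\Big\rangle-\tfrac1{2L}\|g\|^2\Bigm|\cF_k\Big].
\]
This produces the leading term $-\frac{\Delta}{2L}\E[\|g\|^2\mid\cF_k]$. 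The jump $f(\xi_k(A_{k+1}))-f(\xi_{k-1}(A_{k+1}))$ is handled by the descent inequality and the first identity in \eqref{eq:anytime-boundary}. This gives the cross term $-A_{k+1}c_1\langle\nabla f(\xi_{k-1}(A_{k+1})),g\rangle$ and the quadratic term $\frac{L}{2}A_{k+1}c_1^2\|g\|^2$.

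\emph{Distance terms.} On the continuous part, \eqref{eq:two-squared-distances} gives a decrease of
\[
L\int_{A_k}^{A_{k+1}}s\|\xi_{k-1}'(s)\|^2\dd s=L\|p^k\|^2\int_{A_k}^{A_{k+1}}s^{-5}\dd s=\tfrac L4\big(A_k^{-4}-A_{k+1}^{-4}\big)\|p^k\|^2,
\]
which is exactly the second term of \eqref{eq:anytime-defect}. At the jump I will expand both squares using the two identities in \eqref{eq:anytime-boundary}.
\begin{itemize}
\item The quadratic parts give $\frac L2(c_1^2+c_2^2)\|g\|^2=\frac{\Delta^2(1+r^4)}{4L}\|g\|^2\le\frac{\Delta^2}{2L}\|g\|^2$. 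This is the $\frac12\|g\|^2$ inside the third term.
\item The linear parts equal $-\Delta\langle g,\xi_{k-1}(A_{k+1})-x^\star\rangle-\frac{\Delta(1+r^2)}{2A_{k+1}^2}\langle g,p^k\rangle$.
\end{itemize}
These linear parts are added to the function-term inner product $\Delta\langle g,x^k+p^k/u_k^2-x^\star\rangle$. Using $x^k-\xi_{k-1}(A_{k+1})=-\frac12(u_k^{-2}-A_{k+1}^{-2})p^k$, every $x^\star$-dependence cancels. What remains is
\[
\Delta\langle g,p^k\rangle\,\frac{A_{k+1}^4-u_k^4}{2u_k^2A_{k+1}^4}.
\]
Bounding $A_{k+1}^4-u_k^4\le4A_{k+1}^3\Delta$ and $u_k\ge A_k$ gives at most $\frac{2\Delta^2}{A_k^3}\|p^k\|\|g\|$, which is the fourth term.

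\emph{Matching the smoothness jump to the bracket.} Write
\[
A_{k+1}c_1=\frac{\Delta(A_{k+1}-u_k)(A_{k+1}+u_k)}{2LA_{k+1}}=\frac{\Delta(A_{k+1}-u_k)}{L}-\frac{\Delta(A_{k+1}-u_k)^2}{2LA_{k+1}}.
\]
The first piece is exactly $\frac{\Delta^2}{L}\cdot\frac{A_{k+1}-u_k}{\Delta}$, the coefficient of the cross term in the bracket. The remainder is bounded by Cauchy--Schwarz using $(A_{k+1}-u_k)^2\le\Delta^2$ and $A_{k+1}\ge A_k$. Since $\nabla f(\xi_{k-1}(A_{k+1}))$ is $\cF_k$-measurable, this yields the last term $\frac{\Delta^3}{LA_k}\|\nabla f(\xi_{k-1}(A_{k+1}))\|\,\E[\|g\|\mid\cF_k]$. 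The quadratic piece satisfies $\frac L2A_{k+1}c_1^2\le\frac{\Delta^4}{2LA_k}\|g\|^2$, because $1-r^2\le2\Delta/A_{k+1}$; this is the fifth term.

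Summing everything and taking $\E[\cdot\mid\cF_k]$ gives \eqref{eq:anytime-defect}. The main obstacle is the bookkeeping in the linear jump terms: one must verify that all $x^\star$-dependent inner products cancel exactly, leaving only the $O(\Delta^2A_k^{-3})$ residual in $\langle g,p^k\rangle$. The stepsize was chosen precisely to make this cancellation happen, so I would check it first.
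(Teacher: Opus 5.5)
Your proposal is correct and matches the paper's proof essentially step for step. The continuous part uses the interpolation inequality with $y=x^\star$ together with \eqref{eq:two-squared-distances} along $\xi_{k-1}$; the distance jumps are expanded via \eqref{eq:anytime-boundary}, where the $x^\star$-terms cancel and leave $\frac{\Delta}{2}(u_k^{-2}-u_k^2A_{k+1}^{-4})\langle g,p^k\rangle$; and the smoothness jump uses the same split of $A_{k+1}c_1$ and the same bound $1-r^2\le 2\Delta/A_{k+1}$, giving the constants in \eqref{eq:anytime-defect}.
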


\begin{proof}
Since $f$ is convex and $L$-smooth, the smooth convex interpolation inequality \eqref{eq:cocoercivity-inequality} gives
\begin{align*}
 &\frac{\dd}{\dd s}[s(f(\xi_{k-1}(s))-f^\star)]\\
 ={}&f(\xi_{k-1}(s))-f^\star
       +s\langle\nabla f(\xi_{k-1}(s)),\xi'_{k-1}(s)\rangle\\
 \le{}&\langle\nabla f(\xi_{k-1}(s)),\xi_{k-1}(s)-x^\star\rangle
       +s\langle\nabla f(\xi_{k-1}(s)),\xi'_{k-1}(s)\rangle-\frac1{2L}\|\nabla f(\xi_{k-1}(s))\|^2.
\end{align*}
By substituting $\xi'_{k-1}(s)=\frac{p^k}{s^3}$, we further obtain
\begin{align*}
    \frac{\dd}{\dd s}[s(f(\xi_{k-1}(s))-f^\star)] \le \langle\nabla f(\xi_{k-1}(s)),\xi_{k-1}(s)+\frac{p^k}{s^2}-x^\star \rangle-\frac{1}{2L}\|\nabla f(\xi_{k-1}(s))\|^2.
\end{align*}
Combining this bound with \eqref{eq:two-squared-distances} gives
\begin{equation*}
\begin{aligned}
    &\frac{\dd}{\dd s}\Big[s[f(\xi_{k-1}(s))-f^\star]
    +\frac{L}{2}\|\xi_{k-1}(s)-x^\star\|^2
    +\frac{L}{2}\|\xi_{k-1}(s)+\frac{p^k}{s^2}-x^\star\|^2\Big]\\
    \le\ &\langle\nabla f(\xi_{k-1}(s)),\xi_{k-1}(s)+\frac{p^k}{s^2}-x^\star\rangle
    -\frac1{2L}\|\nabla f(\xi_{k-1}(s))\|^2-\frac{L}{s^5}\|p^k\|^2.
\end{aligned}
\end{equation*}
Integrating over $[A_k,A_{k+1}]$ and using uniform sampling yields
\begin{equation}\label{eq:anytime-potential-decrease-I}
    \begin{aligned}
         &A_{k+1}[f(\xi_{k-1}(A_{k+1}))-f^\star]
    +\frac{L}{2}\|\xi_{k-1}(A_{k+1})-x^\star\|^2+\frac{L}{2}\|\xi_{k-1}(A_{k+1})+\frac{p^k}{A_{k+1}^2}-x^\star\|^2-\Psi_k\\
    \le\ &(A_{k+1}-A_k)\mathbb E[\langle\nabla f(x^k),x^k+\frac{p^k}{u_k^2}-x^\star\rangle\mid\mathcal F_k]\\
    &-\frac{A_{k+1}-A_k}{2L}\mathbb E[\|\nabla f(x^k)\|^2\mid\mathcal F_k]
    -\frac{L}{4}\Big(\frac1{A_k^4}-\frac1{A_{k+1}^4}\Big)\|p^k\|^2,
    \end{aligned}
\end{equation}
where we use $\int_{A_k}^{A_{k+1}}s^{-5}\dd s=\frac14(A_k^{-4}-A_{k+1}^{-4})$ in the last term. We next bound the changes at the endpoint $A_{k+1}$. By using the two endpoint identities in \eqref{eq:anytime-boundary} and $\xi_{k-1}(A_{k+1})=x^k+\frac12(\frac1{u_k^2}-\frac1{A_{k+1}^2})p^k$, we obtain
\begin{align*}
    &\frac{L}{2}\big[\|\xi_k(A_{k+1})-x^\star\|^2-\|\xi_{k-1}(A_{k+1})-x^\star\|^2\big]\\
    &+\frac{L}{2}\Big[\|\xi_k(A_{k+1})+\frac{p^{k+1}}{A_{k+1}^2}-x^\star\|^2
    -\|\xi_{k-1}(A_{k+1})+\frac{p^k}{A_{k+1}^2}-x^\star\|^2\Big]\\
    =\ &-(A_{k+1}-A_k)\langle\nabla f(x^k),x^k+\frac{p^k}{u_k^2}-x^\star\rangle+\frac{A_{k+1}-A_k}{2}\Big(\frac1{u_k^2}-\frac{u_k^2}{A_{k+1}^4}\Big)\langle\nabla f(x^k),p^k\rangle\\
    &+\frac{(A_{k+1}-A_k)^2}{4L}\Big(1+\Big(\frac{u_k}{A_{k+1}}\Big)^4\Big)\|\nabla f(x^k)\|^2.
\end{align*}
Since $A_k\le u_k\le A_{k+1}$, we have
\begin{equation*}
    0\le\frac12\Big(\frac1{u_k^2}-\frac{u_k^2}{A_{k+1}^4}\Big)
    =\frac{(A_{k+1}-u_k)(A_{k+1}+u_k)(A_{k+1}^2+u_k^2)}{2u_k^2A_{k+1}^4}
    \le\frac{2(A_{k+1}-u_k)}{A_k^3}.
\end{equation*}
Note that $u_k/A_{k+1}\le1$ and $A_{k+1}-u_k \le A_{k+1} - A_k$. It follows
\begin{align*}
    &\frac{A_{k+1}-A_k}{2}\Big(\frac1{u_k^2}-\frac{u_k^2}{A_{k+1}^4}\Big)\langle\nabla f(x^k),p^k\rangle+\frac{(A_{k+1}-A_k)^2}{4L}\Big(1+\Big(\frac{u_k}{A_{k+1}}\Big)^4\Big)\|\nabla f(x^k)\|^2 \\
    \le \ &\frac{2(A_{k+1}-A_k)^2}{A_k^3}\|p^k\|\|\nabla f(x^k)\|+\frac{(A_{k+1}-A_k)^2}{2L}\|\nabla f(x^k)\|^2,
\end{align*}
which implies
\begin{equation}\label{eq:anytime-potential-decrease-II}
    \begin{aligned}
       &\frac{L}{2}\big[\|\xi_k(A_{k+1})-x^\star\|^2-\|\xi_{k-1}(A_{k+1})-x^\star\|^2\big]\\
       &+\frac{L}{2}\Big[\|\xi_k(A_{k+1})+\frac{p^{k+1}}{A_{k+1}^2}-x^\star\|^2
    -\|\xi_{k-1}(A_{k+1})+\frac{p^k}{A_{k+1}^2}-x^\star\|^2\Big]\\
    \le \ &-(A_{k+1}-A_k)\langle\nabla f(x^k),x^k+\frac{p^k}{u_k^2}-x^\star\rangle+\frac{2(A_{k+1}-A_k)^2}{A_k^3}\|p^k\|\|\nabla f(x^k)\|\\
    &+\frac{(A_{k+1}-A_k)^2}{2L}\|\nabla f(x^k)\|^2.
    \end{aligned}
\end{equation}
For the objective function value at the endpoint, $L$-smoothness and the first identity in \eqref{eq:anytime-boundary} imply
\begin{align*}
    &A_{k+1}[f(\xi_k(A_{k+1}))-f(\xi_{k-1}(A_{k+1}))]\\
    \le\ &-\frac{A_{k+1}(A_{k+1}-A_k)}{2L}\Big(1-\Big(\frac{u_k}{A_{k+1}}\Big)^2\Big)
    \langle\nabla f(\xi_{k-1}(A_{k+1})),\nabla f(x^k)\rangle\\
    &+\frac{A_{k+1}(A_{k+1}-A_k)^2}{8L}\Big(1-\Big(\frac{u_k}{A_{k+1}}\Big)^2\Big)^2\|\nabla f(x^k)\|^2.
\end{align*}
The coefficient of the inner product satisfies
\begin{equation*}
    \frac{A_{k+1}(A_{k+1}-A_k)}{2L}\Big(1-\Big(\frac{u_k}{A_{k+1}}\Big)^2\Big)
    =\frac{(A_{k+1}-A_k)(A_{k+1}-u_k)}{L}
    -\frac{(A_{k+1}-A_k)(A_{k+1}-u_k)^2}{2LA_{k+1}}.
\end{equation*}
Moreover, since $1-(u_k/A_{k+1})^2\le2(A_{k+1}-u_k)/A_{k+1}$, we obtain
\begin{equation}\label{eq:anytime-potential-decrease-III}
\begin{aligned}
    &A_{k+1}[f(\xi_k(A_{k+1}))-f(\xi_{k-1}(A_{k+1}))]\\
    \le \ &-\frac{(A_{k+1}-A_k)(A_{k+1}-u_k)}{L}\langle\nabla f(\xi_{k-1}(A_{k+1})),\nabla f(x^k)\rangle\\
    &+\frac{(A_{k+1}-A_k)^3}{LA_k}\|\nabla f(\xi_{k-1}(A_{k+1}))\|\|\nabla f(x^k)\|+\frac{(A_{k+1}-A_k)^4}{2LA_k}\|\nabla f(x^k)\|^2,
\end{aligned}
\end{equation}
where we also used the Cauchy--Schwarz inequality, $A_{k+1}-u_k\le A_{k+1}-A_k$, and $A_{k+1}\ge A_k$.

Finally, we combine the above three inequalities \eqref{eq:anytime-potential-decrease-I}, \eqref{eq:anytime-potential-decrease-II}, and \eqref{eq:anytime-potential-decrease-III}. Since $\Psi_k$, $\xi_{k-1}(A_{k+1})$, and $p^k$ are $\mathcal F_k$-measurable, the definition of the potential function \eqref{eq:anytime-energy} gives
\begin{align*}
    &\mathbb E[\Psi_{k+1}\mid\mathcal F_k]-\Psi_k \\
    = \ &A_{k+1}[f(\xi_{k-1}(A_{k+1}))-f^\star]+\frac L2\|\xi_{k-1}(A_{k+1})-x^\star\|^2+\frac L2\|\xi_{k-1}(A_{k+1})+\frac{p^k}{A_{k+1}^2}-x^\star\|^2-\Psi_k\\
    &+\frac L2\mathbb E[\|\xi_k(A_{k+1})-x^\star\|^2-\|\xi_{k-1}(A_{k+1})-x^\star\|^2\mid\mathcal F_k]\\
    &+\frac L2\mathbb E\big[\|\xi_k(A_{k+1})+\frac{p^{k+1}}{A_{k+1}^2}-x^\star\|^2 -\|\xi_{k-1}(A_{k+1})+\frac{p^k}{A_{k+1}^2}-x^\star\|^2\mid\mathcal F_k\big] \\
    &+A_{k+1}\mathbb E[f(\xi_k(A_{k+1}))-f(\xi_{k-1}(A_{k+1}))\mid\mathcal F_k].
\end{align*}
By taking conditional expectations in \eqref{eq:anytime-potential-decrease-II} and \eqref{eq:anytime-potential-decrease-III} and adding \eqref{eq:anytime-potential-decrease-I}, we have
\begin{align*}
    &\mathbb E[\Psi_{k+1}\mid\mathcal F_k]-\Psi_k\\
    \le \ &(A_{k+1}-A_k)\mathbb E\big[\langle\nabla f(x^k),x^k+\frac{p^k}{u_k^2}-x^\star\rangle\mid\mathcal F_k\big]\\
    &-\frac{A_{k+1}-A_k}{2L}\mathbb E[\|\nabla f(x^k)\|^2\mid\mathcal F_k]-\frac L4\Big(\frac1{A_k^4}-\frac1{A_{k+1}^4}\Big)\|p^k\|^2\\
    &-(A_{k+1}-A_k)\mathbb E\big[\langle\nabla f(x^k),x^k+\frac{p^k}{u_k^2}-x^\star\rangle\mid\mathcal F_k\big]\\
    &+\frac{2(A_{k+1}-A_k)^2}{A_k^3}\|p^k\|\mathbb E[\|\nabla f(x^k)\|\mid\mathcal F_k]+\frac{(A_{k+1}-A_k)^2}{2L}\mathbb E[\|\nabla f(x^k)\|^2\mid\mathcal F_k]\\
    &-\frac{A_{k+1}-A_k}{L}\mathbb E[(A_{k+1}-u_k)\langle\nabla f(\xi_{k-1}(A_{k+1})),\nabla f(x^k)\rangle\mid\mathcal F_k]\\
    &+\frac{(A_{k+1}-A_k)^3}{LA_k}\|\nabla f(\xi_{k-1}(A_{k+1}))\|\mathbb E[\|\nabla f(x^k)\|\mid\mathcal F_k]+\frac{(A_{k+1}-A_k)^4}{2LA_k}\mathbb E[\|\nabla f(x^k)\|^2\mid\mathcal F_k]\\
    = \ &-\frac{A_{k+1}-A_k}{2L}\mathbb E[\|\nabla f(x^k)\|^2\mid\mathcal F_k]-\frac L4\Big(\frac1{A_k^4}-\frac1{A_{k+1}^4}\Big)\|p^k\|^2\\
    &+\frac{(A_{k+1}-A_k)^2}{L}\mathbb E\Big[\frac12\|\nabla f(x^k)\|^2-\frac{A_{k+1}-u_k}{A_{k+1}-A_k}\langle\nabla f(\xi_{k-1}(A_{k+1})),\nabla f(x^k)\rangle\mid\mathcal F_k\Big]\\
    &+\frac{2(A_{k+1}-A_k)^2}{A_k^3}\|p^k\|\mathbb E[\|\nabla f(x^k)\|\mid\mathcal F_k]+\frac{(A_{k+1}-A_k)^4}{2LA_k}\mathbb E[\|\nabla f(x^k)\|^2\mid\mathcal F_k]\\
    &+\frac{(A_{k+1}-A_k)^3}{LA_k}\|\nabla f(\xi_{k-1}(A_{k+1}))\|\mathbb E[\|\nabla f(x^k)\|\mid\mathcal F_k],
\end{align*}
which completes the proof.
\end{proof}

\begin{lemma}\label{lem:local}
Suppose $A_k\ge1$ and $\theta_k=(A_{k+1}-A_k)/A_k^{1/4}\le1/8$. Then we have
\begin{equation}\label{eq:anytime-local}
\begin{aligned}
    \mathbb E[\Psi_{k+1}\mid\mathcal F_k]\le \Psi_k
    -\frac{A_{k+1}-A_k}{4L}\mathbb E[\|\nabla f(x^k)\|^2\mid\mathcal F_k]
    -\frac{L(A_{k+1}-A_k)}{4A_k^5}\|p^k\|^2.
\end{aligned}
\end{equation}
\end{lemma}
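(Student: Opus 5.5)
We start from Lemma~\ref{lem:anytime-energy-defect} and show that every term after the first two is dominated by half of the two negative terms $-\frac{A_{k+1}-A_k}{2L}G^2$ and $-\frac L4(A_k^{-4}-A_{k+1}^{-4})\|p^k\|^2$. Here we write $\Delta=A_{k+1}-A_k$, $G^2=\mathbb E[\|\nabla f(x^k)\|^2\mid\mathcal F_k]$ and $P=\|p^k\|$.

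First we lower bound the momentum dissipation. Because $\frac14(A_k^{-4}-A_{k+1}^{-4})=\int_{A_k}^{A_{k+1}}s^{-5}\dd s\ge \Delta A_{k+1}^{-5}$, and $A_{k+1}/A_k\le 1+\theta_k A_k^{-3/4}\le 9/8$, this term is at least $c\,L\Delta P^2/A_k^5$ with $c=(8/9)^5\approx0.55>1/2$. The slack between $c$ and $1/4$ is our budget for absorbing the $P^2$ errors.

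Next we handle the bracketed expectation term. We reuse the completing-the-square argument from the proof of Lemma~\ref{lem:fixed-local}, with the first bound of Lemma~\ref{lem:anytime-gradient-bounds} in place of Lemma~\ref{lem:fixed-gradient-bounds}. This bounds the expectation by
\[
\frac{L\Delta}{6A_k^3}\|\nabla f(\xi_{k-1}(A_{k+1}))\|P+\frac{L^2\Delta^2}{6A_k^6}P^2 .
\]
After multiplying by $\Delta^2/L$, we apply the second bound of Lemma~\ref{lem:anytime-gradient-bounds}, $\|\nabla f(\xi_{k-1}(A_{k+1}))\|\le G+L\Delta P/A_k^3$. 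The result is a sum of terms of the forms $\frac{\Delta^3}{A_k^3}GP$ and $\frac{L\Delta^4}{A_k^6}P^2$.

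The remaining three terms are handled in the same way, using $\mathbb E[\|\nabla f(x^k)\|\mid\mathcal F_k]\le G$ by Jensen. The term $\frac{2\Delta^2}{A_k^3}PG$ is the delicate one. By Young's inequality,
\[
\frac{2\Delta^2}{A_k^3}PG\le \frac{\Delta}{8L}G^2+\frac{8L\Delta^3}{A_k^6}P^2 ,
\]
and we must show that $\frac{8L\Delta^3}{A_k^6}P^2$ is a small multiple of $\frac{L\Delta}{A_k^5}P^2$. This needs $\Delta^2/A_k$ to be small, and indeed $\Delta^2/A_k=\theta_k^2A_k^{-1/2}\le 1/64$. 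This scaling is exactly why the hypothesis uses $\theta_k=\Delta/A_k^{1/4}$. The remaining terms are handled similarly:
\begin{itemize}
\item[] $\frac{\Delta^4}{2LA_k}G^2$ is at most $\frac{\Delta^3}{2LA_k}\cdot\Delta G^2$, and $\Delta^3/A_k\le\theta_k^3A_k^{-1/4}$ is tiny.
\item[] $\frac{\Delta^3}{LA_k}(G+L\Delta P/A_k^3)G$ splits into a $G^2$ piece with coefficient $\Delta^2/A_k$ and a cross term $\frac{\Delta^4}{A_k^4}PG$. The cross term is handled by Young's inequality as above.
\item[] The $\Delta^2/(2L)\,G^2$ piece inside the bracket (already absorbed via the completing-the-square step) is controlled by $\Delta\le 1/8$.
\end{itemize}

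Finally, we collect the total coefficient of $\frac{\Delta}{L}G^2$ and of $\frac{L\Delta}{A_k^5}P^2$. We then verify that they are at most $\frac12-\frac14$ and $c-\frac14$ respectively, using $\theta_k\le1/8$ and $A_k\ge1$ (so that $\Delta\le1/8$, $\Delta^2/A_k\le1/64$ and $\Delta/A_k^{1/4}\le 1/8$).

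The main obstacle is the bookkeeping for the $PG$ cross terms, since each must be split by Young's inequality with carefully chosen weights so that both the gradient and momentum budgets survive. The worst case is the $\frac{2\Delta^2}{A_k^3}PG$ term. If the constants do not close with the split above, we will instead use $\frac{\Delta}{16L}G^2+\frac{16L\Delta^3}{A_k^6}P^2$; with $\Delta^2/A_k\le 1/64$ this gives a $P^2$ coefficient of $1/4$, which the budget $c-1/4\approx0.30$ still accommodates.
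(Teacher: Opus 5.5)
Your plan is essentially the paper's proof. It uses the same steps: the $(9/8)^5$ lower bound on the momentum dissipation, the completing-the-square step of Lemma~\ref{lem:fixed-local} with the anytime gradient bounds, Jensen plus the second bound of Lemma~\ref{lem:anytime-gradient-bounds} for the endpoint gradient, Young's inequality on the cross terms, and a final coefficient count in $\theta_k$. Your asymmetric split of $\frac{2\Delta^2}{A_k^3}PG$ gives the same $\frac18$ on each side as the paper's symmetric split with weight $A_k^{5/2}$. The constants close with room to spare: about $0.14$ against the gradient budget $\frac14$, and about $0.13$ against the momentum budget $c-\frac14\approx0.30$.

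One assertion is false and must be removed. The hypotheses $\theta_k\le 1/8$ and $A_k\ge1$ do \emph{not} imply $\Delta\le 1/8$. They give only $\Delta=\theta_kA_k^{1/4}\le A_k^{1/4}/8$, and along the grid of Theorem~\ref{thm:convex} one has $\Delta\approx\frac19(1+k/12)^{1/3}\to\infty$.

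Consequently, the term $\frac{\Delta^2}{2L}G^2$ cannot be absorbed into the budget $\frac{\Delta}{4L}G^2$ by appealing to smallness of $\Delta$ uniformly in $k$. It has to be cancelled exactly. That is what completing the square together with $\mathbb E[\frac{u_k-A_k}{A_{k+1}-A_k}\mid\mathcal F_k]=\frac12$ achieves: after that step no $G^2$ term from the bracket survives. Your third bullet should therefore simply be dropped. Every remaining estimate uses only the valid ratios $\Delta^2/\sqrt{A_k}=\theta_k^2$, $\Delta^2/A_k\le\theta_k^2$ and $\Delta^3/A_k\le\theta_k^3$.

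Relatedly, the place where the $A_k^{1/4}$ normalization is actually forced is the bracket cross term $\frac{\Delta^3}{6A_k^3}PG$. Its Young split needs $\Delta^4/A_k=\theta_k^4$ small, with no spare power of $A_k$. The term $\frac{2\Delta^2}{A_k^3}PG$ only needs $\Delta/\sqrt{A_k}$ small. You leave the split of the bracket cross term implicit; it closes easily, giving a contribution of at most $\theta_k^2/12$ on each side.
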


\begin{proof}
We bound the terms in Lemma~\ref{lem:anytime-energy-defect} separately. We first retain a lower bound on the decrease from the rescaled momentum. Since $A_k\ge1$ and $\theta_k\le1/8$, we have
$A_{k+1}/A_k=1+\theta_k/A_k^{3/4}\le9/8$. By applying Lemma~\ref{lem:technical-interval-calculus} with $m=4$, and using $(9/8)^5<2$, it follows
\begin{equation*}
    \frac14\Big(\frac1{A_k^4}-\frac1{A_{k+1}^4}\Big)
    \ge\frac{A_{k+1}-A_k}{A_{k+1}^5}
    \ge\frac{A_{k+1}-A_k}{2A_k^5}.
\end{equation*}
For the third term on the right-hand side of \eqref{eq:anytime-defect}, completing the square with respect to the gradient difference yields
\begin{align*}
    &\frac12\|\nabla f(x^k)\|^2
    -\frac{A_{k+1}-u_k}{A_{k+1}-A_k}\langle\nabla f(\xi_{k-1}(A_{k+1})),\nabla f(x^k)\rangle+\Big(\frac12-\frac{u_k-A_k}{A_{k+1}-A_k}\Big)\|\nabla f(\xi_{k-1}(A_{k+1}))\|^2\\
    =\ &\frac{u_k-A_k}{A_{k+1}-A_k}\langle\nabla f(\xi_{k-1}(A_{k+1})),\nabla f(x^k)-\nabla f(\xi_{k-1}(A_{k+1}))\rangle+\frac12\|\nabla f(x^k)-\nabla f(\xi_{k-1}(A_{k+1}))\|^2.
\end{align*}
By the conditional uniform-moment identities in Lemma~\ref{lem:technical-interval-calculus}, $\mathbb E[\frac{u_k-A_k}{A_{k+1}-A_k}\mid\mathcal F_k]=\frac12$. Moreover, $\xi_{k-1}(A_{k+1})$ is $\mathcal F_k$-measurable. Consequently, the term $(\frac12-\frac{u_k-A_k}{A_{k+1}-A_k})\|\nabla f(\xi_{k-1}(A_{k+1}))\|^2$ vanishes after taking the conditional expectation. Applying the first inequality in Lemma~\ref{lem:anytime-gradient-bounds} gives
\begin{align*}
    &\mathbb E\Big[\frac12\|\nabla f(x^k)\|^2
    -\frac{A_{k+1}-u_k}{A_{k+1}-A_k}\langle\nabla f(\xi_{k-1}(A_{k+1})),\nabla f(x^k)\rangle\mid\mathcal F_k\Big]\\
    \le\ &\frac{L}{(A_{k+1}-A_k)A_k^3}\|\nabla f(\xi_{k-1}(A_{k+1}))\|\|p^k\|
    \cdot\mathbb E[(u_k-A_k)(A_{k+1}-u_k)\mid\mathcal F_k]\\
    &+\frac{L^2}{2A_k^6}\|p^k\|^2\cdot\mathbb E[(A_{k+1}-u_k)^2\mid\mathcal F_k].
\end{align*}
The same lemma gives
\begin{equation*}
    \mathbb E[(u_k-A_k)(A_{k+1}-u_k)\mid\mathcal F_k]
    =\frac{(A_{k+1}-A_k)^2}{6},\ 
    \mathbb E[(A_{k+1}-u_k)^2\mid\mathcal F_k]
    =\frac{(A_{k+1}-A_k)^2}{3}.
\end{equation*}
Therefore, we obtain
\begin{align*}
    &\mathbb E\Big[\frac12\|\nabla f(x^k)\|^2
    -\frac{A_{k+1}-u_k}{A_{k+1}-A_k}\langle\nabla f(\xi_{k-1}(A_{k+1})),\nabla f(x^k)\rangle\mid\mathcal F_k\Big]\\
    \le\ &\frac{L(A_{k+1}-A_k)}{6A_k^3}\|\nabla f(\xi_{k-1}(A_{k+1}))\|\|p^k\|
    +\frac{L^2(A_{k+1}-A_k)^2}{6A_k^6}\|p^k\|^2\\
    \le\ &\frac{L(A_{k+1}-A_k)}{6A_k^3}\|p^k\|\big(\mathbb E[\|\nabla f(x^k)\|^2\mid\mathcal F_k]\big)^{1/2}
    +\frac{L^2(A_{k+1}-A_k)^2}{3A_k^6}\|p^k\|^2,
\end{align*}
where the last step uses the second inequality in Lemma~\ref{lem:anytime-gradient-bounds}. It remains to bound the last three terms in \eqref{eq:anytime-defect}. For the first of these terms, the Cauchy--Schwarz inequality gives
\begin{equation*}
    \frac{2(A_{k+1}-A_k)^2}{A_k^3}\|p^k\|\mathbb E[\|\nabla f(x^k)\|\mid\mathcal F_k]
    \le\frac{2(A_{k+1}-A_k)^2}{A_k^3}\|p^k\|\big(\mathbb E[\|\nabla f(x^k)\|^2\mid\mathcal F_k]\big)^{1/2}.
\end{equation*}
We leave the second term unchanged. For the third of these terms, applying the second inequality in Lemma~\ref{lem:anytime-gradient-bounds} and the Cauchy--Schwarz inequality yields
\begin{equation*}
    \begin{aligned}
       &\frac{(A_{k+1}-A_k)^3}{LA_k}\|\nabla f(\xi_{k-1}(A_{k+1}))\|\mathbb E[\|\nabla f(x^k)\|\mid\mathcal F_k]\\
    \le\ &\frac{(A_{k+1}-A_k)^3}{LA_k}\mathbb E[\|\nabla f(x^k)\|^2\mid\mathcal F_k]
    +\frac{(A_{k+1}-A_k)^4}{A_k^4}\|p^k\|\big(\mathbb E[\|\nabla f(x^k)\|^2\mid\mathcal F_k]\big)^{1/2}. 
    \end{aligned}
\end{equation*}
By substituting the above bounds into \eqref{eq:anytime-defect}, we obtain
\begin{equation}\label{eq:anytime-potential-final}
    \begin{aligned}
        &\mathbb E[\Psi_{k+1}\mid\mathcal F_k]-\Psi_k\\
    \le\ &-\frac{A_{k+1}-A_k}{2L}\mathbb E[\|\nabla f(x^k)\|^2\mid\mathcal F_k]
    -\frac{L(A_{k+1}-A_k)}{2A_k^5}\|p^k\|^2\\
    &+\Big[\frac{2(A_{k+1}-A_k)^2}{A_k^3}
    +\frac{(A_{k+1}-A_k)^3}{6A_k^3}
    +\frac{(A_{k+1}-A_k)^4}{A_k^4}\Big]\|p^k\|\big(\mathbb E[\|\nabla f(x^k)\|^2\mid\mathcal F_k]\big)^{1/2}\\
    &+\frac{L(A_{k+1}-A_k)^4}{3A_k^6}\|p^k\|^2
    +\Big[\frac{(A_{k+1}-A_k)^3}{LA_k}+\frac{(A_{k+1}-A_k)^4}{2LA_k}\Big]\mathbb E[\|\nabla f(x^k)\|^2\mid\mathcal F_k].
    \end{aligned}
\end{equation}
Young's inequality gives
\begin{equation*}
    \|p^k\|\big(\mathbb E[\|\nabla f(x^k)\|^2\mid\mathcal F_k]\big)^{1/2}\le\frac{A_k^{5/2}}2\Big(\frac1L\mathbb E[\|\nabla f(x^k)\|^2\mid\mathcal F_k]+\frac{L}{A_k^5}\|p^k\|^2\Big),
\end{equation*}
which implies that
\begin{align*}
    &\Big[\frac{2(A_{k+1}-A_k)^2}{A_k^3}+\frac{(A_{k+1}-A_k)^3}{6A_k^3}+\frac{(A_{k+1}-A_k)^4}{A_k^4}\Big]\|p^k\|\big(\mathbb E[\|\nabla f(x^k)\|^2\mid\mathcal F_k]\big)^{1/2}\\
    \le \ &\frac{A_k^{5/2}}2\Big[\frac{2(A_{k+1}-A_k)^2}{A_k^3}+\frac{(A_{k+1}-A_k)^3}{6A_k^3}+\frac{(A_{k+1}-A_k)^4}{A_k^4}\Big]\Big(\frac1L\mathbb E[\|\nabla f(x^k)\|^2\mid\mathcal F_k]+\frac{L}{A_k^5}\|p^k\|^2\Big)\\
    = \ &(A_{k+1}-A_k)\Big[\frac{A_{k+1}-A_k}{A_k^{1/2}}+\frac{(A_{k+1}-A_k)^2}{12A_k^{1/2}}+\frac{(A_{k+1}-A_k)^3}{2A_k^{3/2}}\Big]\Big(\frac1L\mathbb E[\|\nabla f(x^k)\|^2\mid\mathcal F_k]+\frac{L}{A_k^5}\|p^k\|^2\Big).
\end{align*}
Since $A_{k+1}-A_k=\theta_kA_k^{1/4}$ and $A_k\ge1$, the coefficient satisfies
\begin{equation*}
    \frac{A_{k+1}-A_k}{A_k^{1/2}}+\frac{(A_{k+1}-A_k)^2}{12A_k^{1/2}}+\frac{(A_{k+1}-A_k)^3}{2A_k^{3/2}}=\frac{\theta_k}{A_k^{1/4}}+\frac{\theta_k^2}{12}+\frac{\theta_k^3}{2A_k^{3/4}}\le\theta_k+\frac{\theta_k^2}{12}+\frac{\theta_k^3}{2}.
\end{equation*}
Consequently, we obtain
\begin{equation}\label{eq:anytime-mixed-term-bound}
\begin{aligned}
    &\Big[\frac{2(A_{k+1}-A_k)^2}{A_k^3}+\frac{(A_{k+1}-A_k)^3}{6A_k^3}+\frac{(A_{k+1}-A_k)^4}{A_k^4}\Big]\|p^k\|\big(\mathbb E[\|\nabla f(x^k)\|^2\mid\mathcal F_k]\big)^{1/2}\\
    \le \ &(A_{k+1}-A_k)\Big(\theta_k+\frac{\theta_k^2}{12}+\frac{\theta_k^3}{2}\Big)\Big(\frac1L\mathbb E[\|\nabla f(x^k)\|^2\mid\mathcal F_k]+\frac{L}{A_k^5}\|p^k\|^2\Big).
\end{aligned}
\end{equation}
For the term involving $\|p^k\|^2$ on the right-hand side of \eqref{eq:anytime-potential-final}, we have
\begin{equation}\label{eq:anytime-momentum-remainder-bound}
\begin{aligned}
   \frac{L(A_{k+1}-A_k)^4}{3A_k^6}\|p^k\|^2=\ &(A_{k+1}-A_k)\frac{(A_{k+1}-A_k)^3}{3A_k}\frac{L}{A_k^5}\|p^k\|^2 \\
    =\ &(A_{k+1}-A_k)\frac{\theta_k^3}{3A_k^{1/4}}\frac{L}{A_k^5}\|p^k\|^2\\
    \le\ &(A_{k+1}-A_k)\frac{\theta_k^3}{3}\frac{L}{A_k^5}\|p^k\|^2.  
\end{aligned}
\end{equation}
For the last term on the right-hand side of \eqref{eq:anytime-potential-final}, we have
\begin{equation}\label{eq:anytime-gradient-remainder-bound}
\begin{aligned}
    &\Big[\frac{(A_{k+1}-A_k)^3}{LA_k}+\frac{(A_{k+1}-A_k)^4}{2LA_k}\Big]\mathbb E[\|\nabla f(x^k)\|^2\mid\mathcal F_k]\\
    = \ &(A_{k+1}-A_k)\Big[\frac{(A_{k+1}-A_k)^2}{A_k}+\frac{(A_{k+1}-A_k)^3}{2A_k}\Big]\frac1L\mathbb E[\|\nabla f(x^k)\|^2\mid\mathcal F_k]\\
    = \ &(A_{k+1}-A_k)\Big[\frac{\theta_k^2}{A_k^{1/2}}+\frac{\theta_k^3}{2A_k^{1/4}}\Big]\frac1L\mathbb E[\|\nabla f(x^k)\|^2\mid\mathcal F_k]\\
    \le \ &(A_{k+1}-A_k)\Big(\theta_k^2+\frac{\theta_k^3}{2}\Big)\frac1L\mathbb E[\|\nabla f(x^k)\|^2\mid\mathcal F_k].
\end{aligned}
\end{equation}
Substituting \eqref{eq:anytime-mixed-term-bound}, \eqref{eq:anytime-momentum-remainder-bound}, and \eqref{eq:anytime-gradient-remainder-bound} into inequality \eqref{eq:anytime-potential-final} gives
\begin{align*}
    &\mathbb E[\Psi_{k+1}\mid\mathcal F_k]-\Psi_k \\
    \le \ &-\frac{A_{k+1}-A_k}{2}\Big(\frac1L\mathbb E[\|\nabla f(x^k)\|^2\mid\mathcal F_k]+\frac{L}{A_k^5}\|p^k\|^2\Big)\\
    &+(A_{k+1}-A_k)\Big(\theta_k+\frac{\theta_k^2}{12}+\frac{\theta_k^3}{2}\Big)\Big(\frac1L\mathbb E[\|\nabla f(x^k)\|^2\mid\mathcal F_k]+\frac{L}{A_k^5}\|p^k\|^2\Big)\\
    &+(A_{k+1}-A_k)\frac{\theta_k^3}{3}\frac{L}{A_k^5}\|p^k\|^2+(A_{k+1}-A_k)\Big(\theta_k^2+\frac{\theta_k^3}{2}\Big)\frac1L\mathbb E[\|\nabla f(x^k)\|^2\mid\mathcal F_k]\\
    = \ &(A_{k+1}-A_k)\Big[-\frac12+\Big(\theta_k+\frac{\theta_k^2}{12}+\frac{\theta_k^3}{2}\Big)+\Big(\theta_k^2+\frac{\theta_k^3}{2}\Big)\Big]\frac1L\mathbb E[\|\nabla f(x^k)\|^2\mid\mathcal F_k]\\
    &+(A_{k+1}-A_k)\Big[-\frac12+\Big(\theta_k+\frac{\theta_k^2}{12}+\frac{\theta_k^3}{2}\Big)+\frac{\theta_k^3}{3}\Big]\frac{L}{A_k^5}\|p^k\|^2\\
    = \ &(A_{k+1}-A_k)\Big(-\frac12+\theta_k+\frac{13}{12}\theta_k^2+\theta_k^3\Big)\frac1L\mathbb E[\|\nabla f(x^k)\|^2\mid\mathcal F_k]\\
    &+(A_{k+1}-A_k)\Big(-\frac12+\theta_k+\frac1{12}\theta_k^2+\frac56\theta_k^3\Big)\frac{L}{A_k^5}\|p^k\|^2.
\end{align*}
Since $\theta_k\ge0$, we further have
\begin{equation*}
    \mathbb E[\Psi_{k+1}\mid\mathcal F_k]-\Psi_k\le(A_{k+1}-A_k)\Big(-\frac12+\theta_k+2\theta_k^2+2\theta_k^3\Big)\Big[\frac1L\mathbb E[\|\nabla f(x^k)\|^2\mid\mathcal F_k]+\frac{L}{A_k^5}\|p^k\|^2\Big].
\end{equation*}
Finally, $\theta_k\le1/8$ implies $\theta_k+2\theta_k^2+2\theta_k^3\le\frac18+\frac1{32}+\frac1{256}=\frac{41}{256}<\frac14$. Therefore, we conclude
\begin{align*}
    \mathbb E[\Psi_{k+1}\mid\mathcal F_k]-\Psi_k
    \le \ &-\frac{A_{k+1}-A_k}{4}\Big(\frac1L\mathbb E[\|\nabla f(x^k)\|^2\mid\mathcal F_k]+\frac{L}{A_k^5}\|p^k\|^2\Big)\\
    = \ &-\frac{A_{k+1}-A_k}{4L}\mathbb E[\|\nabla f(x^k)\|^2\mid\mathcal F_k]-\frac{L(A_{k+1}-A_k)}{4A_k^5}\|p^k\|^2,
\end{align*}
which completes the proof.
\end{proof}

\subsection{Convergence guarantees} We now establish the convergence rate in terms of the objective function value gap by combining the potential decrease in Lemma~\ref{lem:local} with a bound on the error from the deterministic endpoint $\xi_{k-1}(A_k)$ to the sampled iterate $x^k=\xi_{k-1}(u_k)$.

\begin{lemma}\label{lem:anytime-global}
Suppose $A_k\ge1$ and $\theta_k=(A_{k+1}-A_k)/A_k^{1/4}\le1/8$ for every $k\ge0$. Then, for every $k\ge0$, we have
\begin{equation*}
    \mathbb E[\Psi_k]\le\frac32L\|x^0-x^\star\|^2, \ \text{and} \ 
    f(\xi_{k-1}(s))-f^\star\le\frac{2\Psi_k}{A_k}, \ \forall s\in[A_k,A_{k+1}].
\end{equation*}
\end{lemma}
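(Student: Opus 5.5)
The first bound follows from the supermartingale property in Lemma~\ref{lem:local}, and the second from the square-root Lipschitz property \eqref{eq:Lipschitz-square-root-optimality-gap} combined with a bound on $\|p^k\|$ in terms of $\Psi_k$.

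\emph{Expectation bound.} Since $A_k\ge1$ and $\theta_k\le1/8$ for every $k$, Lemma~\ref{lem:local} applies at every index. Dropping its two nonpositive terms gives $\mathbb E[\Psi_{k+1}\mid\mathcal F_k]\le\Psi_k$. The tower property then yields $\mathbb E[\Psi_k]\le\Psi_0$ by induction. At $k=0$ we have $A_0=1$, $\xi_{-1}(A_0)=x^0$ and $p^0=0$, so
\[
\Psi_0=f(x^0)-f^\star+L\|x^0-x^\star\|^2.
\]
Using $f(x^0)-f^\star\le\frac L2\|x^0-x^\star\|^2$ gives $\Psi_0\le\frac32L\|x^0-x^\star\|^2$.

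\emph{Pointwise bound.} Fix $s\in[A_k,A_{k+1}]$. By the form of the incoming curve,
\[
\xi_{k-1}(s)-\xi_{k-1}(A_k)=\tfrac12\big(A_k^{-2}-s^{-2}\big)p^k .
\]
The coefficient is at most $(s-A_k)/A_k^3\le(A_{k+1}-A_k)/A_k^3$, as in the proof of Lemma~\ref{lem:anytime-gradient-bounds}.

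To control $p^k$, write
\[
\frac{p^k}{A_k^2}=\Big(\xi_{k-1}(A_k)+\frac{p^k}{A_k^2}-x^\star\Big)-\big(\xi_{k-1}(A_k)-x^\star\big).
\]
Both squared norms on the right appear in $\Psi_k$ with weight $L/2$. Hence $\|p^k\|^2/A_k^4\le4\Psi_k/L$, so $\|p^k\|\le2A_k^2\sqrt{\Psi_k/L}$. This gives
\[
\|\xi_{k-1}(s)-\xi_{k-1}(A_k)\|\le\frac{2(A_{k+1}-A_k)}{A_k}\sqrt{\Psi_k/L}.
\]

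Next apply \eqref{eq:Lipschitz-square-root-optimality-gap} with $\sqrt{f(\xi_{k-1}(A_k))-f^\star}\le\sqrt{\Psi_k/A_k}$. This yields
\[
\sqrt{f(\xi_{k-1}(s))-f^\star}\le\sqrt{\frac{\Psi_k}{A_k}}\Big(1+\sqrt2\,\frac{A_{k+1}-A_k}{\sqrt{A_k}}\Big).
\]
Since $(A_{k+1}-A_k)/\sqrt{A_k}=\theta_kA_k^{-1/4}\le1/8$, the factor is at most $1+\sqrt2/8$. Its square is about $1.39<2$, which gives $f(\xi_{k-1}(s))-f^\star\le2\Psi_k/A_k$.

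The main point needing care is the bound on $\|p^k\|$ in terms of $\Psi_k$. This is exactly where the extra distance term $\frac L2\|\xi_{k-1}(A_k)-x^\star\|^2$ in the anytime potential is used. The remaining steps are routine arithmetic with $\theta_k\le1/8$ and $A_k\ge1$.
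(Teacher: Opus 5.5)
Your proposal is correct and follows essentially the same route as the paper's proof. You get the expectation bound from the supermartingale inequality in Lemma~\ref{lem:local} and the initial value $\Psi_0\le\frac32L\|x^0-x^\star\|^2$, and the pointwise bound from $\|p^k\|^2/A_k^4\le4\Psi_k/L$, the curve-displacement estimate, the square-root Lipschitz property \eqref{eq:Lipschitz-square-root-optimality-gap}, and $(1+\sqrt2/8)^2<2$.
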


\begin{proof}
Since $A_0=1$, $\xi_{-1}(A_0)=x^0$, and $p^0=0$, we have $\Psi_0=f(x^0)-f^\star+L\|x^0-x^\star\|^2\le\frac32L\|x^0-x^\star\|^2$. Taking expectations in Lemma~\ref{lem:local} and using the nonnegativity of the two subtracted terms yields $\mathbb E[\Psi_{k+1}]\le\mathbb E[\Psi_k]$. Therefore, by induction, we obtain
\begin{equation*}
    \mathbb E[\Psi_k]\le\Psi_0\le\frac32L\|x^0-x^\star\|^2, \ \forall k\ge0.
\end{equation*}
For the objective function value along the curve, note that $\frac{p^k}{A_k^2}=(\xi_{k-1}(A_k)+\frac{p^k}{A_k^2}-x^\star)-(\xi_{k-1}(A_k)-x^\star)$. By the definition of the potential function \eqref{eq:anytime-energy}, we have
\begin{align*}
    \frac{\|p^k\|^2}{A_k^4}
    \le\ &2\|\xi_{k-1}(A_k)+\frac{p^k}{A_k^2}-x^\star\|^2
    +2\|\xi_{k-1}(A_k)-x^\star\|^2
    \le\frac{4\Psi_k}{L}.
\end{align*}
Moreover, for every $s\in[A_k,A_{k+1}]$, the curve satisfies
\begin{equation*}
    \|\xi_{k-1}(s)-\xi_{k-1}(A_k)\|
    =\frac12\Big(\frac1{A_k^2}-\frac1{s^2}\Big)\|p^k\|
    \le\frac{A_{k+1}-A_k}{A_k^3}\|p^k\|.
\end{equation*}
By applying the Lipschitz property \eqref{eq:Lipschitz-square-root-optimality-gap} with $x=\xi_{k-1}(A_k)$ and $y=\xi_{k-1}(s)$, and using $A_k[f(\xi_{k-1}(A_k))-f^\star]\le\Psi_k$, we obtain
\begin{align*}
    f(\xi_{k-1}(s))-f^\star
    \le \Big(\sqrt{\frac{\Psi_k}{A_k}}+\sqrt{\frac L2}\frac{A_{k+1}-A_k}{A_k}\frac{\|p^k\|}{A_k^2}\Big)^2 \le \frac{\Psi_k}{A_k}\Big(1+\frac{\sqrt2(A_{k+1}-A_k)}{\sqrt{A_k}}\Big)^2.
\end{align*}
Finally, since $A_k\ge1$ and $\theta_k\le1/8$, we have $(A_{k+1}-A_k)/\sqrt{A_k}=\theta_k/A_k^{1/4}\le1/8$ and $(1+\sqrt2/8)^2<2$. Consequently, we have
\begin{equation*}
    f(\xi_{k-1}(s))-f^\star\le\frac{2\Psi_k}{A_k}, \ \forall s\in[A_k,A_{k+1}],
\end{equation*}
and the proof is complete.
\end{proof}

Note that $x^K=\xi_{K-1}(u_K)$ with $u_K\in[A_K,A_{K+1}]$. Applying Lemma~\ref{lem:anytime-global} at $s=u_K$ yields
\begin{equation}\label{eq:anytime-general-weights}
    \mathbb E[f(x^K)-f^\star]\le\frac{2\mathbb E[\Psi_K]}{A_K}\le\frac{3L\|x^0-x^\star\|^2}{A_K}.
\end{equation}
The bound \eqref{eq:anytime-general-weights} suggests making $A_K$ large while keeping $(A_{k+1}-A_k)/A_k^{1/4}\le1/8$ at every iteration. To relate these two requirements, the concavity of $s^{3/4}$ gives
\begin{equation*}
    A_{k+1}^{3/4}-A_k^{3/4}\le\frac{3(A_{k+1}-A_k)}{4A_k^{1/4}}\le\frac3{32}.
\end{equation*}
Summing over $k=0,\ldots,K-1$ yields $A_K\le(1+3K/32)^{4/3}$. Therefore, the largest growth order allowed by this local condition is $A_K=\cO(K^{4/3})$. This limitation concerns the stated sufficient condition and does not prove optimality among all deterministic-boundary schedules. Section~\ref{sec:random-boundaries} avoids this condition by using a different treatment of the signed discrepancy. 

\begin{theorem}\label{thm:convex}
Let $f$ be convex and $L$-smooth, and let $x^\star$ be a minimizer. Choose $A_k=(1+k/12)^{4/3}$. Independently sample $u_k\sim\operatorname{Unif}(A_k,A_{k+1})$ for every $k\ge0$, and use the schedule \eqref{eq:anytime-parameters}. Then, for every deterministic $K\ge1$, the heavy-ball method satisfies
\begin{equation*}
    \mathbb E[f(x^K)-f^\star]\le\frac{84L\|x^0-x^\star\|^2}{K^{4/3}}.
\end{equation*}
\end{theorem}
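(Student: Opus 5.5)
The plan is to combine the general bound \eqref{eq:anytime-general-weights} with two properties of the grid $A_k=(1+k/12)^{4/3}$. First, it satisfies the hypotheses of Lemma~\ref{lem:anytime-global}: $A_k\ge1$ and $\theta_k\le1/8$ for every $k\ge0$. Second, $A_K$ grows like $K^{4/3}$ with an explicit constant. Once both are in place, \eqref{eq:anytime-general-weights} gives $\E[f(x^K)-f^\star]\le 3L\|x^0-x^\star\|^2/A_K$ for every deterministic $K\ge1$. The remaining work is to bound $1/A_K$ by $28/K^{4/3}$.

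For the step condition I would write $t_k=1+k/12\ge1$, so that $A_k=t_k^{4/3}$ and $A_k^{1/4}=t_k^{1/3}$. By the mean value theorem for $t\mapsto t^{4/3}$,
\begin{equation*}
A_{k+1}-A_k\le\frac{4}{3}\cdot\frac1{12}\,t_{k+1}^{1/3}=\frac19\,t_{k+1}^{1/3}.
\end{equation*}
Hence
\begin{equation*}
\theta_k\le\frac19\Big(\frac{t_{k+1}}{t_k}\Big)^{1/3}\le\frac19\Big(\frac{13}{12}\Big)^{1/3},
\end{equation*}
because $t_{k+1}/t_k=1+1/(12t_k)\le13/12$. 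The right-hand side is about $0.1141<1/8$, which is the condition needed. Monotonicity gives $A_k\ge A_0=1$. This is the step I would check most carefully, since the margin is only moderate. The mean value bound is sharp enough here, so no finer estimate should be needed.

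For the growth bound, I would use $A_K=(1+K/12)^{4/3}\ge (K/12)^{4/3}$. Since $12^{4/3}\approx 27.5\le 28$, this gives $A_K\ge K^{4/3}/28$. Then
\begin{equation*}
\E[f(x^K)-f^\star]\le \frac{3\cdot 28\,L\|x^0-x^\star\|^2}{K^{4/3}}=\frac{84L\|x^0-x^\star\|^2}{K^{4/3}}.
\end{equation*}
This is exactly the constant claimed in the theorem. Anytime validity holds automatically: the schedule does not depend on $K$, and Lemma~\ref{lem:anytime-global} applies at every index, with $u_K$ sampled in $[A_K,A_{K+1}]$.
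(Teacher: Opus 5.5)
Your proposal is correct and matches the paper's proof: the same mean-value-theorem bound $\theta_k\le\frac19\bigl(1+\frac1{12t_k}\bigr)^{1/3}<\frac18$ (the paper's Lemma~\ref{lem:technical-anytime-grid}) and the same growth bound $A_K\ge K^{4/3}/12^{4/3}$. The final step is also identical: substitute into \eqref{eq:anytime-general-weights} and use $3\cdot12^{4/3}\le 84$.
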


\begin{proof}
The deterministic anytime grid properties in Lemma~\ref{lem:technical-anytime-grid} give
$\theta_k<1/8$ at every iteration and $A_K\ge K^{4/3}/12^{4/3}$.
Thus the condition of Lemma~\ref{lem:local} is satisfied, and \eqref{eq:anytime-general-weights} gives
\begin{align*}
    \mathbb E[f(x^K)-f^\star]
    \le\frac{3L\|x^0-x^\star\|^2}{A_K}
    \le\frac{3\cdot12^{4/3}L\|x^0-x^\star\|^2}{K^{4/3}}
    \le\frac{84L\|x^0-x^\star\|^2}{K^{4/3}},
\end{align*}
which completes the proof.
\end{proof}

The conditional potential decrease \eqref{eq:anytime-local} used in the expectation analysis also yields an almost sure last-iterate convergence rate along the same infinite trajectory.

\begin{corollary}\label{cor:anytime-almost-sure}
Under the conditions of Theorem~\ref{thm:convex}, the heavy-ball method with the schedule \eqref{eq:anytime-parameters} satisfies
\begin{equation*}
    \mathbb P\Big(\sup_{K\ge1}K^{4/3}[f(x^K)-f^\star]<\infty\Big)=1.
\end{equation*}
In particular, $f(x^K)-f^\star=\mathcal O(K^{-4/3})$ almost surely along the same infinite trajectory.
\end{corollary}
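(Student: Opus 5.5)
The plan is to read Lemma~\ref{lem:local} as a nonnegative supermartingale statement. Under the conditions of Theorem~\ref{thm:convex}, Lemma~\ref{lem:technical-anytime-grid} gives $A_k\ge1$ and $\theta_k<1/8$ for every $k\ge0$, so \eqref{eq:anytime-local} holds at every iteration. Dropping the two subtracted nonnegative terms, we obtain $\mathbb E[\Psi_{k+1}\mid\mathcal F_k]\le\Psi_k$.

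First we check the hypotheses of a martingale convergence theorem. Each $\Psi_k$ is nonnegative, being a sum of a scaled function gap and squared norms. It is $\mathcal F_k$-measurable, since $\xi_{k-1}(A_k)$ and $p^k$ depend only on $u_0,\ldots,u_{k-1}$. It is integrable by Lemma~\ref{lem:anytime-global}. Hence $\{\Psi_k\}$ is a nonnegative supermartingale with respect to $\{\mathcal F_k\}$. By Doob's supermartingale convergence theorem, $\Psi_k$ converges almost surely to a finite limit, and in particular $\sup_{k\ge0}\Psi_k<\infty$ almost surely. If we want a quantitative tail bound instead, Ville's maximal inequality gives
\[
\mathbb P\Big(\sup_k\Psi_k\ge\lambda\Big)\le\frac{\Psi_0}{\lambda}\le\frac{3L\|x^0-x^\star\|^2}{2\lambda},
\]
which also yields $\sup_k\Psi_k<\infty$ almost surely after letting $\lambda\to\infty$.

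Next we transfer this to the iterates pathwise. The second assertion of Lemma~\ref{lem:anytime-global} is deterministic: it holds for every realization, since its proof only uses the definition of $\Psi_k$, smoothness, and the grid condition. Applying it at $s=u_K\in[A_K,A_{K+1}]$ with $x^K=\xi_{K-1}(u_K)$ gives
\[
f(x^K)-f^\star\le\frac{2\Psi_K}{A_K}\quad\text{surely, for every }K\ge1.
\]
Using $A_K\ge K^{4/3}/12^{4/3}$ from Lemma~\ref{lem:technical-anytime-grid}, we get
\[
K^{4/3}[f(x^K)-f^\star]\le 2\cdot12^{4/3}\sup_k\Psi_k,
\]
and the right-hand side is finite almost surely. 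This proves the claim.

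The only delicate point is making sure the bound from Lemma~\ref{lem:anytime-global} is pathwise and not merely in expectation. I expect this to be immediate on inspecting its proof, which is purely deterministic given $\Psi_k$. The measurability of $\Psi_k$ with respect to $\mathcal F_k$ should likewise be routine to verify.
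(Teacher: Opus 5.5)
Your proposal is correct and follows essentially the same argument as the paper. Both proofs treat $\{\Psi_k\}$ as a nonnegative supermartingale via Lemma~\ref{lem:local}, get $\sup_k\Psi_k<\infty$ almost surely from supermartingale convergence (your Ville maximal-inequality alternative is equally valid), and then combine the pathwise bound $f(x^K)-f^\star\le 2\Psi_K/A_K$ from Lemma~\ref{lem:anytime-global} with $A_K\ge K^{4/3}/12^{4/3}$.
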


\begin{proof}
By construction, $\Psi_k$ is nonnegative and $\mathcal F_k$-measurable for every $k\ge0$. Under the conditions of Theorem~\ref{thm:convex}, Lemma~\ref{lem:local} gives $\mathbb E[\Psi_{k+1}\mid\mathcal F_k]\le\Psi_k$ for every $k\ge0$. Moreover, since $A_0=1$ and $p^0=0$, the definition of the potential and the $L$-smoothness of $f$ imply
\begin{equation*}
    \Psi_0=f(x^0)-f^\star+L\|x^0-x^\star\|^2\le\frac32L\|x^0-x^\star\|^2<\infty.
\end{equation*}
By taking expectations and iterating the inequality $\mathbb E[\Psi_{k+1}\mid\mathcal F_k]\le\Psi_k$, we have $0\le\mathbb E[\Psi_k]\le\Psi_0<\infty$ for every $k\ge0$. Consequently, $\{\Psi_k\}_{k\ge0}$ is a nonnegative supermartingale with respect to $\{\mathcal F_k\}_{k\ge0}$. By the nonnegative supermartingale convergence theorem (Theorem~4.2.12 of \citet{durrett2019probability}), there exists an almost surely finite nonnegative random variable $\Psi_\infty$ such that $\Psi_k\to\Psi_\infty$ almost surely. Since every convergent real sequence is bounded, it follows that $\sup_{k\ge0}\Psi_k<\infty$ almost surely.

On the other hand, the pathwise curve bound $f(\xi_{k-1}(s))-f^\star\le2\Psi_k/A_k$ established in Lemma~\ref{lem:anytime-global}, together with $x^K=\xi_{K-1}(u_K)$ and $u_K\in[A_K,A_{K+1}]$, gives $0\le f(x^K)-f^\star\le\frac{2\Psi_K}{A_K}$ for every $K\ge1$. For the schedule $A_K=(1+K/12)^{4/3}$, we have $A_K\ge K^{4/3}/12^{4/3}$. Therefore, on the probability-one event $\{\sup_{k\ge0}\Psi_k<\infty\}$, we have
\begin{equation*}
    \sup_{K\ge1}K^{4/3}[f(x^K)-f^\star]\le2\cdot12^{4/3}\sup_{k\ge0}\Psi_k<\infty,
\end{equation*}
which completes the proof.
\end{proof}

\section{Randomized Time Boundaries: Anytime Convergence}\label{sec:random-boundaries}
We now randomize the time boundaries to improve the anytime rate from $\mathcal O(K^{-4/3})$ to $\mathcal O(K^{-3/2})$. We retain the interpolation and potential from Section~\ref{sec:anytime-proof}, using the grid \eqref{eq:random-grid}, the evaluation times \eqref{eq:random-sampling}, and the filtration \eqref{eq:random-filtration}. All $V_k\sim\operatorname{Unif}[1,2]$ and $U_k\sim\operatorname{Unif}[0,1]$ are mutually independent, and all boundaries and coefficients are predefined and independent of the oracle. Equivalently, for $k\ge0$ the parameters are
\begin{equation}\label{eq:random-parameters}
 \delta_k=\tfrac12(u_k^{-2}-u_{k+1}^{-2}),\ 
 \eta_k=\frac{(A_{k+1}-A_k)u_k^2\delta_k}{L},\ 
 \beta_k=\frac{\delta_k}{\delta_{k-1}}\quad(k\ge1),
\end{equation}
with $\beta_0=0$. The almost sure positivity of $\delta_k$ follows from $u_k<A_{k+1}\le u_{k+1}$.

\begin{theorem}\label{thm:random-anytime}
Let $f$ be convex and $L$-smooth, and let $x^\star$ be a minimizer. Initialize $x^{-1}=x^0$ and run the heavy-ball method \eqref{eq:hb} with the schedule \eqref{eq:random-grid}--\eqref{eq:random-coefficients}. Then, for every deterministic $K\ge1$,
\begin{equation}\label{eq:random-anytime-rate}
 \E[f(x^K)-f^\star]
 \le\frac{4L\|x^0-x^\star\|^2}{(1+K/1024)^{3/2}}
 \le\frac{131072L\|x^0-x^\star\|^2}{K^{3/2}}.
\end{equation}
Along the same infinite trajectory,
\begin{equation}\label{eq:random-almost-sure-rate}
 \mathbb P\Big(\sup_{K\ge1}K^{3/2}[f(x^K)-f^\star]<\infty\Big)=1.
\end{equation}
In particular, $f(x^K)-f^\star=\mathcal O(K^{-3/2})$ almost surely.
\end{theorem}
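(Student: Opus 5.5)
The plan is to reuse the curve \eqref{eq:anytime-curve}, the potential $\Psi_k$ in \eqref{eq:anytime-energy} and the exact identities of Lemma~\ref{lem:anytime-energy-defect}, now conditioning on $\mathcal G_k$ instead of $\mathcal F_k$. The target is a one-step supermartingale inequality $\E[\Psi_{k+1}\mid\mathcal G_k]\le\Psi_k$ under the weaker local condition $h_k\coloneqq A_{k+1}-A_k\le A_k^{1/3}/256$, replacing Lemma~\ref{lem:local}, whose condition $\theta_k\le1/8$ caps growth at $K^{4/3}$.

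\textbf{Which term to fix.} First I would rerun the proof of Lemma~\ref{lem:anytime-energy-defect} conditionally on $(\mathcal G_k,V_k)$. Given $(\mathcal G_k,V_k)$ the interval is fixed and $u_k$ is uniform on it, so \eqref{eq:anytime-defect} holds verbatim with $\mathcal F_k$ replaced by $\sigma(\mathcal G_k,V_k)$. Next I would redo the bookkeeping in Lemma~\ref{lem:local} with $h_k\le A_k^{1/3}/256$ in place of $h_k\le A_k^{1/4}/8$. Tracking powers of $A_k$ after the Young step with weight $A_k^{5/2}$, the terms are controlled as follows:
\begin{itemize}
\item[(i)] the term $2h_k^2\|p^k\|\,\E\|\nabla f(x^k)\|/A_k^3$ costs $h_k\cdot h_k/A_k^{1/2}$;
\item[(ii)] the remainders $h_k^3/(LA_k)$ and $Lh_k^4/A_k^6$ cost $h_k\cdot h_k^2/A_k$ and $h_k\cdot h_k^3/A_k$;
\item[(iii)] the signed-discrepancy term, coming from $\E[(u_k-A_k)(A_{k+1}-u_k)]$ against $\|\nabla f(\xi_{k-1}(A_{k+1}))\|\|p^k\|$, costs $h_k\cdot h_k^2/A_k^{1/2}$.
\end{itemize}
Terms (i) and (ii) are small whenever $h_k\ll A_k^{1/2}$, so only term (iii) forces $h_k\lesssim A_k^{1/4}$. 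Term (iii) is exactly what the random boundary must remove.

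\textbf{Main obstacle: term (iii).} Here I would exploit averaging over $V_k$ as well as $U_k$, i.e. the joint law of $(u_k,A_{k+1})$. Set $g(s)=\nabla f(\xi_{k-1}(s))$, which is $\mathcal G_k$-measurable as a function of $s$. The expectation in question,
\[
\E\Big[\tfrac{h_k}{2}\|g(u_k)\|^2-(A_{k+1}-u_k)\langle g(A_{k+1}),g(u_k)\rangle\ \Big|\ \mathcal G_k\Big],
\]
is a double integral of $\langle g(a),g(s)\rangle$ over $\{A_k\le s\le a\}$ against an explicit kernel determined by the law of $V_k$.

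I would write $\langle g(a),g(s)\rangle=\|g(s)\|^2+\langle g(a)-g(s),g(s)\rangle$ and then rewrite the first-order part through $\tfrac{\dd}{\dd s}\|g(s)\|^2$. Integrating by parts in $s$, the first-order contribution should collapse to terms of the form $\E[h_k^2\|g(u_k)\|^2]/A_k^{\cdot}$ plus second-order terms $L^2h_k^3\|p^k\|^2/A_k^6$. The collapse happens because randomizing the right endpoint over a spread comparable to $h_k$ smooths the kernel; otherwise one has a boundary delta. The leftover terms are then absorbed by the decrease $-\tfrac{h_k}{2L}\E\|\nabla f(x^k)\|^2-\tfrac{Lh_k}{2A_k^5}\|p^k\|^2$, using the bound $\|g(a)-g(s)\|\le L|a-s|\|p^k\|/A_k^3$ of Lemma~\ref{lem:anytime-gradient-bounds}. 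If the exact cancellation fails, my fallback is to show the kernel-averaged discrepancy is bounded by $h_k\cdot O(h_k^{3}/A_k)$, which is what the scaling $h_k\asymp A_k^{1/3}$ requires. I expect this step to be the crux, and the numerical constant $512$ to come from it.

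\textbf{Concluding steps.}
\begin{itemize}
\item \emph{Supermartingale.} With $\E[\Psi_{k+1}\mid\mathcal G_k]\le\Psi_k$ in hand, $\E\Psi_k\le\Psi_0\le\tfrac32L\|x^0-x^\star\|^2$. The pathwise bound $f(\xi_{k-1}(s))-f^\star\le2\Psi_k/A_k$ on $[A_k,A_{k+1}]$ follows exactly as in Lemma~\ref{lem:anytime-global}, since it only used $h_k/\sqrt{A_k}\le1/8$, and here $h_k\le A_k^{1/3}/256\le A_k^{1/2}/8$.
\item \emph{Growth of $A_K$.} Surely, $A_{k+1}\le(1+1/256)A_k$, and by concavity $A_{k+1}^{2/3}-A_k^{2/3}\ge\tfrac23 h_kA_{k+1}^{-1/3}\ge\tfrac{2}{3\cdot512}(1+1/256)^{-1/3}\ge\tfrac{1}{1024}\cdot\tfrac{2}{3}\cdot\tfrac{3}{2}$ up to the stated constant. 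Hence $A_K\ge(1+K/1024)^{3/2}$ deterministically.
\item \emph{Expectation rate.} Since $A_K$ is bounded below by a constant, $\E[f(x^K)-f^\star]\le2\E\Psi_K/(1+K/1024)^{3/2}\le 3L\|x^0-x^\star\|^2/(1+K/1024)^{3/2}$, which gives \eqref{eq:random-anytime-rate}. The second inequality there uses $1024^{3/2}\cdot4=131072$.
\item \emph{Almost sure rate.} This follows as in Corollary~\ref{cor:anytime-almost-sure}: $\{\Psi_k\}$ is a nonnegative $\mathcal G_k$-supermartingale, so $\sup_k\Psi_k<\infty$ almost surely. Combined with $f(x^K)-f^\star\le2\Psi_K/A_K\le2\Psi_K/(1+K/1024)^{3/2}$, this gives \eqref{eq:random-almost-sure-rate}.
\end{itemize}
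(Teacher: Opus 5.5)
\textbf{There is a genuine gap at the crux step.} Your diagnosis that term (iii) is the only obstruction is correct. But the step meant to remove it fails, and with it the one-step inequality $\E[\Psi_{k+1}\mid\mathcal G_k]\le\Psi_k$ on which the rest of your argument rests.

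Write $\phi(s)=\|g(s)\|^2$ and $h_k=A_{k+1}-A_k$. Consider the term you want to control, $\frac{h_k}{L}\E[\frac{h_k}{2}\|g(u_k)\|^2-(A_{k+1}-u_k)\langle g(A_{k+1}),g(u_k)\rangle\mid\mathcal G_k,V_k]$. Up to a harmless remainder of size at most $\frac{Lh_k^4}{8A_k^6}\|p^k\|^2$, it equals
\[
\frac1{2L}\Big[\int_{A_k}^{A_{k+1}}(s-A_k)\phi(s)\dd s-\frac{h_k^2}{2}\phi(A_{k+1})\Big]=-\frac1{4L}\int_{A_k}^{A_{k+1}}(r-A_k)^2\phi'(r)\dd r .
\]
Averaging over $V_k$ turns the weight into $(r-A_k)^2\,\mathbb P(A_{k+1}\ge r\mid\mathcal G_k)$, which is \emph{nonnegative}. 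In the paper's notation this is $\int b\,\phi$ with $b$ from \eqref{eq:random-kernel-data}: $\int b=0$, but its antiderivative $\frac{t^2}{4}\mathbb P(V_k\ge t)$ is nonnegative. So the zeroth order cancels, but nothing cancels at first order.

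To see that this first-order term is large, take $f=\frac L2\|x-x^\star\|^2$ with $p^k$ pointing toward $x^\star$. Then $-\phi'(r)=2L\|\nabla f(\xi_{k-1}(r))\|\|p^k\|/r^3$, and the averaged term has size $\asymp h_k^3\|\nabla f\|\|p^k\|/A_k^3$. Against the decrease $\frac{h_k}{2L}\|\nabla f\|^2+\frac{Lh_k}{2A_k^5}\|p^k\|^2$ (worst case when the two are balanced), it costs $h_k\cdot h_k^2/A_k^{1/2}$. That is exactly term (iii) again, a relative factor growing like $A_k^{1/6}$ when $h_k\asymp A_k^{1/3}$. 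Your fallback bound $h_k\cdot O(h_k^3/A_k)$ is therefore false.

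The convexity and smoothness inequalities behind Lemma~\ref{lem:anytime-energy-defect} are equalities for this quadratic. Hence this term is genuinely present in the exact one-step change. So for large $A_k$, no per-step bookkeeping can give the supermartingale inequality uniformly over states.

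\textbf{Missing idea.} The signed term must be controlled \emph{cumulatively across iterations}, as in Lemma~\ref{lem:random-perturbation}. Use the renewal kernel $\kappa=b+\int_1^2\kappa(\cdot-v)\dd v$ and set $\ell_k=A_k^{1/3}/512$. The $V_k$-average of the signed term becomes $\frac{A_k^{2/3}}{512^2L}\int_0^\infty\kappa(t)\big[\phi(A_k+\ell_kt)-\phi(A_{k+1}+\ell_kt)\big]\dd t$.
\begin{itemize}
\item The subtracted piece almost equals the leading piece of step $k+1$, so the sum telescopes.
\item The mismatch comes only from the curve correction $\xi_k-\xi_{k-1}$ (proportional to $\nabla f(x^k)$) and from the scale change $\ell_{k+1}/\ell_k$. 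Both are absorbable into $\frac{h_k}{4L}(\|\nabla f(x^k)\|^2+L^2\|p^k\|^2/A_k^5)$.
\item The telescoped boundary terms are at most $\frac18\Psi_0+\frac18\E[\Psi_\tau]$, because $\int_0^\infty(1+t)^2|\kappa(t)|\dd t<256$.
\end{itemize}

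This yields $\E[\Psi_\tau]\le\frac97\Psi_0$ for bounded stopping times (Lemma~\ref{lem:random-energy}), not a supermartingale. Consequently:
\begin{itemize}
\item The almost-sure part must go through a maximal inequality: apply the bound at the first $k\le N$ with $\Psi_k>M$ (or at $N$). Supermartingale convergence is not available.
\item Your expectation constant $3$ becomes $\frac{27}{7}$.
\end{itemize}
Your growth bound $A_K\ge(1+K/1024)^{3/2}$, the pathwise bound $f(x^K)-f^\star\le2\Psi_K/A_K$, and the final conversions are fine.
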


\subsection{Inherited interpolation and potential function decrease}\label{subsec:random-discrepancy}
The rescaled update \eqref{eq:rescaled-two-step} and \eqref{eq:random-parameters} give
\[
 p^{k+1}=p^k-\frac{(A_{k+1}-A_k)u_k^2}{L}\nabla f(x^k).
\]
The curves remain those in \eqref{eq:anytime-curve}; in particular,
\begin{equation}\label{eq:random-curve-change}
 \xi_k(s)-\xi_{k-1}(s)
 =-\frac{A_{k+1}-A_k}{2L}\Big(1-\frac{u_k^2}{s^2}\Big)\nabla f(x^k),
 \quad s\ge u_k.
\end{equation}
The endpoint identities \eqref{eq:anytime-boundary} and Figure~\ref{fig:anytime-correction} hold pathwise. The anytime potential \eqref{eq:anytime-energy}, evaluated at the random left endpoint, is
\[
\begin{aligned}
 \Psi_k=A_k[f(\xi_{k-1}(A_k))-f^\star]
       +\frac L2\|\xi_{k-1}(A_k)-x^\star\|^2+\frac L2\|\xi_{k-1}(A_k)+ \frac{p^k}{A_k^2}-x^\star\|^2.
\end{aligned}
\]
Both $\Psi_k$ and the incoming curve are $\mathcal G_k$-measurable.

\begin{lemma}\label{lem:random-gradient-bounds}
For every $s,t\ge A_k$, it holds that
\begin{equation}\label{eq:random-gradient-difference}
 \|\nabla f(\xi_{k-1}(s))-\nabla f(\xi_{k-1}(t))\|
 \le\frac{L|s-t|}{A_k^3}\|p^k\|.
\end{equation}
Moreover, we have
\begin{equation}\label{eq:random-momentum-control}
 \frac{L\|p^k\|^2}{A_k^4}\le4\Psi_k, \  \|\nabla f(\xi_{k-1}(A_k))\|^2\le\frac{2L\Psi_k}{A_k}.
\end{equation}
\end{lemma}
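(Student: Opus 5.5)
The three claims are pathwise and use only the form of the incoming curve \eqref{eq:anytime-curve} together with the definition of $\Psi_k$. I would first record that $\xi_{k-1}(s)=x^k+\frac12(u_k^{-2}-s^{-2})p^k$ for $s\ge u_k$, and more generally that $\xi_{k-1}(s)=\xi_{k-1}(A_k)+\frac12(A_k^{-2}-s^{-2})p^k$ for all $s\ge A_k$. This identity holds for $k\ge1$ by the subtraction argument of Section~\ref{sec:anytime-proof}, and it is trivial for $k=0$ since $p^0=0$. Consequently,
\[
\xi_{k-1}(s)-\xi_{k-1}(t)=\tfrac12\big(t^{-2}-s^{-2}\big)p^k,
\]
and the argument does not depend on whether the boundaries are random.

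For \eqref{eq:random-gradient-difference}, I would apply $L$-smoothness to get $\|\nabla f(\xi_{k-1}(s))-\nabla f(\xi_{k-1}(t))\|\le \frac L2|s^{-2}-t^{-2}|\,\|p^k\|$. Then I would bound
\[
\tfrac12|s^{-2}-t^{-2}|=\frac{|s-t|(s+t)}{2s^2t^2}.
\]
Since $s,t\ge A_k$, we have $\frac{s+t}{2s^2t^2}=\frac12\big(\frac1{st^2}+\frac1{s^2t}\big)\le A_k^{-3}$, which gives the claim for all $s,t\ge A_k$, not only those in $[A_k,A_{k+1}]$.

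For the momentum control, I would write $p^k/A_k^2=(\xi_{k-1}(A_k)+p^k/A_k^2-x^\star)-(\xi_{k-1}(A_k)-x^\star)$ and use $\|a-b\|^2\le2\|a\|^2+2\|b\|^2$. The two squared distances appear in $\Psi_k$ with weight $L/2$, and the function-value term is nonnegative, so $\|p^k\|^2/A_k^4\le 4\Psi_k/L$. For the gradient bound, I would use the self-bounding inequality \eqref{eq:self-bounding-inequality}, $\|\nabla f(\xi_{k-1}(A_k))\|^2\le 2L(f(\xi_{k-1}(A_k))-f^\star)$, together with $A_k[f(\xi_{k-1}(A_k))-f^\star]\le\Psi_k$, which holds because the distance terms are nonnegative.

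There is no genuine obstacle here. The only point that needs care is that $A_k$ is now random: every inequality above is deterministic given the realized $A_k$, $p^k$, and curve, so it holds pathwise. This also confirms that $\Psi_k$ is $\mathcal G_k$-measurable, which later steps will rely on.
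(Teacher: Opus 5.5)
Your proof is correct and is essentially the paper's argument. The paper obtains the gradient-difference bound by integrating $\xi_{k-1}'(r)=p^k/r^3$ between $t$ and $s$ and using $r^{-3}\le A_k^{-3}$, which is what your closed-form estimate $\tfrac12|s^{-2}-t^{-2}|\le|s-t|/A_k^3$ computes; the momentum and gradient bounds are derived exactly as you do, via $\|a-b\|^2\le2\|a\|^2+2\|b\|^2$ on the two distance terms of $\Psi_k$, and via the self-bounding inequality together with $A_k[f(\xi_{k-1}(A_k))-f^\star]\le\Psi_k$.
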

\begin{proof}
Integrating $\xi_{k-1}'(s)=p^k/s^3$ gives
$\|\xi_{k-1}(s)-\xi_{k-1}(t)\|\le |s-t|\|p^k\|/A_k^3$.
Lipschitz continuity of the gradient proves \eqref{eq:random-gradient-difference}.
The squared-sum inequality gives the momentum bound directly:
\[
\begin{aligned}
 \frac{L\|p^k\|^2}{A_k^4} \le2L\|\xi_{k-1}(A_k)-x^\star\|^2+2L\|\xi_{k-1}(A_k)+p^k/A_k^2-x^\star\|^2\le4\Psi_k.
\end{aligned}
\]
The gradient bound follows from \eqref{eq:self-bounding-inequality} and $A_k[f(\xi_{k-1}(A_k))-f^\star] \le \Psi_k$.
\end{proof}

The incoming curve and the endpoints are fixed, and $u_k$ is uniform on $[A_k,A_{k+1}]$ conditionally on $(\mathcal G_k,V_k)$. Thus Lemma~\ref{lem:anytime-energy-defect} applies. The following bound refines the inequality \eqref{eq:anytime-defect} in Lemma~\ref{lem:anytime-energy-defect}.

\begin{lemma}\label{lem:random-signed-defect}
For the grid \eqref{eq:random-grid}, we have
\begin{equation}\label{eq:random-signed-defect}
\begin{aligned}
 &\E[\Psi_{k+1}\mid\mathcal G_k,V_k]-\Psi_k \\
 \le\ &\frac1{2L}\int_{A_k}^{A_{k+1}}(s-A_k)\|\nabla f(\xi_{k-1}(s))\|^2\dd s -\frac{(A_{k+1}-A_k)^2}{4L}\|\nabla f(\xi_{k-1}(A_{k+1}))\|^2 \\
 &-\frac{63(A_{k+1}-A_k)}{128L} \Big(\E[\|\nabla f(x^k)\|^2\mid\mathcal G_k,V_k]+\frac{L^2\|p^k\|^2}{A_k^5}\Big).
\end{aligned}
\end{equation}
\end{lemma}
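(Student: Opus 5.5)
The plan is to start from the exact identity behind Lemma~\ref{lem:anytime-energy-defect}, applied conditionally on $(\mathcal G_k,V_k)$. This is legitimate because, given $(\mathcal G_k,V_k)$, the interval $[A_k,A_{k+1}]$, the incoming curve $\xi_{k-1}$ and $p^k$ are fixed, and $u_k$ is uniform on $[A_k,A_{k+1}]$. Write $\Delta=A_{k+1}-A_k$, $h(s)=\nabla f(\xi_{k-1}(s))$ and $e=h(A_{k+1})$. The idea is to stop bounding the third term of \eqref{eq:anytime-defect} crudely, as in Lemma~\ref{lem:local}, and instead keep its signed part exactly.

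\textbf{Signed term.} By uniform sampling,
\[
\frac{\Delta^2}{L}\E\Big[\tfrac12\|\nabla f(x^k)\|^2-\tfrac{A_{k+1}-u_k}{\Delta}\langle e,\nabla f(x^k)\rangle\,\Big|\,\mathcal G_k,V_k\Big]
=\frac{1}{L}\int_{A_k}^{A_{k+1}}\Big[\tfrac{\Delta}{2}\|h(s)\|^2-(A_{k+1}-s)\langle e,h(s)\rangle\Big]\dd s .
\]
Splitting $\Delta=(s-A_k)+(A_{k+1}-s)$, the integrand equals
\[
\tfrac12(s-A_k)\|h(s)\|^2+\tfrac12(A_{k+1}-s)\|h(s)-e\|^2-\tfrac12(A_{k+1}-s)\|e\|^2 .
\]
Integrating the last piece gives exactly $-\frac{\Delta^2}{4L}\|e\|^2$, and the first piece is exactly the integral term in \eqref{eq:random-signed-defect}. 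For the middle piece, \eqref{eq:random-gradient-difference} gives $\|h(s)-e\|\le L(A_{k+1}-s)\|p^k\|/A_k^3$, so it is at most $\frac{L\Delta^4}{8A_k^6}\|p^k\|^2=\Delta\cdot\frac{\Delta^3}{8A_k}\cdot\frac{L\|p^k\|^2}{A_k^5}$.

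\textbf{Grid smallness.} From \eqref{eq:random-grid}, $\Delta\le A_k^{1/3}/256$. Since $A_k\ge1$, this gives
\[
\frac{\Delta}{A_k^{1/2}}\le\frac1{256},\qquad \frac{\Delta^2}{A_k}\le\frac1{256^2},\qquad \frac{A_{k+1}}{A_k}\le1+\frac1{256}.
\]
The last bound, through the interval-calculus estimate, yields
\[
\frac14\big(A_k^{-4}-A_{k+1}^{-4}\big)\ge\frac{\Delta}{A_{k+1}^5}\ge\Big(1-\frac5{256}\Big)\frac{\Delta}{A_k^5}.
\]
Hence the first two terms of \eqref{eq:anytime-defect} give $-\frac{\Delta}{2L}\E\|\nabla f(x^k)\|^2$ and at most $-(1-\frac5{256})\frac{L\Delta}{2A_k^5}\|p^k\|^2$.

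\textbf{Remaining error terms.} Three terms of \eqref{eq:anytime-defect} remain:
\[
\frac{2\Delta^2}{A_k^3}\|p^k\|\,\E\|\nabla f(x^k)\|,\qquad \frac{\Delta^4}{2LA_k}\E\|\nabla f(x^k)\|^2,\qquad \frac{\Delta^3}{LA_k}\|e\|\,\E\|\nabla f(x^k)\|.
\]
\begin{itemize}
\item For the third term, I would bound $\|e\|$ by the second inequality of Lemma~\ref{lem:anytime-gradient-bounds}, which remains valid under the conditioning on $(\mathcal G_k,V_k)$.
\item I would apply Cauchy--Schwarz and then Young's inequality in the form used in Lemma~\ref{lem:local}: $\|p^k\|\,(\E\|\nabla f(x^k)\|^2)^{1/2}\le\frac{A_k^{5/2}}2\big(\frac1L\E\|\nabla f(x^k)\|^2+\frac{L}{A_k^5}\|p^k\|^2\big)$.
\end{itemize}
Each error then becomes $\Delta\cdot c\cdot\big(\frac1L\E\|\nabla f(x^k)\|^2+\frac{L}{A_k^5}\|p^k\|^2\big)$. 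The dominant coefficient is $c=\Delta/A_k^{1/2}\le1/256$, and the others are of order $\Delta^2/A_k$ or $\Delta^3/A_k$, which are far smaller.

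Collecting everything, the coefficient of $-\frac{\Delta}{L}\big(\E\|\nabla f(x^k)\|^2+\frac{L^2\|p^k\|^2}{A_k^5}\big)$ should be at least $\frac12-\frac5{512}-\frac1{256}-(\text{tiny})$. The target is $\frac{63}{128}=\frac12-\frac1{128}$, which leaves a total loss budget of $\frac{4}{512}$.

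\textbf{Main obstacle.} The hard part is exactly this constant bookkeeping. The $\frac5{512}$ from the $A_{k+1}^{-5}$ estimate alone would exceed the budget. So I expect to need a sharper estimate, for instance
\[
\frac14\big(A_k^{-4}-A_{k+1}^{-4}\big)\ge\frac{\Delta}{A_k^5}\Big(1-\frac{5\Delta}{2A_k}\Big),
\]
together with $\Delta/A_k\le A_k^{-2/3}/256$. If that is still not enough, I would tighten the mixed-term Young weights so that all losses fit within $\frac1{128}$.
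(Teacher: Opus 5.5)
Your route is the paper's. You condition on $(\mathcal G_k,V_k)$ and start from \eqref{eq:anytime-defect}. You rewrite the signed term with $\|h\|^2-2\langle e,h\rangle=\|h-e\|^2-\|e\|^2$, which isolates exactly the integral term and $-\frac{\Delta^2}{4L}\|e\|^2$. You bound the $\|h-e\|^2$ remainder by $\frac{L\Delta^4}{8A_k^6}\|p^k\|^2$. You absorb the three leftover terms by Young's inequality, with dominant loss $\Delta/\sqrt{A_k}\le 1/256$. The one small difference is in the third leftover term: the paper bounds $\|e\|\le\|\nabla f(x^k)\|+L\Delta\|p^k\|/A_k^3$ pathwise before conditioning, instead of using the conditional bound from Lemma~\ref{lem:anytime-gradient-bounds}. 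Either works.

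The ``main obstacle'' you describe does not exist; it comes from a dropped factor of $2$. Since $\frac14(A_k^{-4}-A_{k+1}^{-4})\ge \Delta A_{k+1}^{-5}$, the momentum term of \eqref{eq:anytime-defect} satisfies
\[
-\frac L4\big(A_k^{-4}-A_{k+1}^{-4}\big)\|p^k\|^2\le -\frac{L\Delta}{A_{k+1}^5}\|p^k\|^2=-\frac{\Delta}{L}\Big(\frac{A_k}{A_{k+1}}\Big)^5\frac{L^2\|p^k\|^2}{A_k^5}.
\]
Measured in units of $\frac{\Delta}{L}\big(\E[\|\nabla f(x^k)\|^2\mid\mathcal G_k,V_k]+\frac{L^2\|p^k\|^2}{A_k^5}\big)$, its coefficient is therefore $(A_k/A_{k+1})^5\ge 1-\frac5{256}$, not $\frac12(1-\frac5{256})$ as you wrote. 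That leaves almost a factor of two of slack.

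The paper simply uses $A_{k+1}^5<2A_k^5$, i.e.\ \eqref{eq:random-momentum-dissipation}, to lower this coefficient to $\frac12$, matching the gradient term. The total loss on either coefficient is at most $\frac1{256}+\frac1{65536}+\frac{9}{512^3}<\frac1{128}$, which gives $\frac12-\frac1{128}=\frac{63}{128}$. So neither the sharper reciprocal-power estimate nor a reweighting of Young's inequality is needed.

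Note also that under your own mistaken accounting, the first fix you propose would not have closed the argument, since $\frac{5}{1024}+\frac{1}{256}>\frac1{128}$. Only combining it with the reweighting would have worked.
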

\begin{proof}
First, note that $A_k$, $A_{k+1}$, $p^k$, and the incoming curve $\xi_{k-1}$ are fixed conditionally on $(\mathcal G_k,V_k)$, and $u_k$ is uniform on $[A_k,A_{k+1}]$. Expanding the third term on the right-hand side of \eqref{eq:anytime-defect} gives
\begin{equation}\label{eq:randomized-potential-terms}
    \begin{aligned}
         &\E[\Psi_{k+1}\mid\mathcal G_k,V_k]-\Psi_k \\
         \le \ &-\frac{A_{k+1}-A_k}{2L}
       \E[\|\nabla f(x^k)\|^2\mid\mathcal G_k,V_k]
       -\frac L4(A_k^{-4}-A_{k+1}^{-4})\|p^k\|^2\\
 &+\frac{(A_{k+1}-A_k)^2}{2L}
       \E[\|\nabla f(x^k)\|^2\mid\mathcal G_k,V_k] \\
       &-\frac{A_{k+1}-A_k}{L}
       \E[(A_{k+1}-u_k)
       \langle\nabla f(\xi_{k-1}(A_{k+1})),\nabla f(x^k)\rangle
       \mid\mathcal G_k,V_k]\\
 &+\frac{2(A_{k+1}-A_k)^2}{A_k^3}\|p^k\|
       \E[\|\nabla f(x^k)\|\mid\mathcal G_k,V_k]\\
 &+\frac{(A_{k+1}-A_k)^4}{2LA_k}
       \E[\|\nabla f(x^k)\|^2\mid\mathcal G_k,V_k]\\
 &+\frac{(A_{k+1}-A_k)^3}{LA_k}
       \|\nabla f(\xi_{k-1}(A_{k+1}))\|
       \E[\|\nabla f(x^k)\|\mid\mathcal G_k,V_k].
    \end{aligned}
\end{equation}
Note that $2\langle\nabla f(\xi_{k-1}(A_{k+1})),\nabla f(x^k)\rangle =\|\nabla f(\xi_{k-1}(A_{k+1}))\|^2+\|\nabla f(x^k)\|^2-\|\nabla f(x^k)-\nabla f(\xi_{k-1}(A_{k+1}))\|^2$. Substituting this identity yields
\begin{align*}
 &\frac{(A_{k+1}-A_k)^2}{2L}
       \E[\|\nabla f(x^k)\|^2\mid\mathcal G_k,V_k] -\frac{A_{k+1}-A_k}{L}
       \E[(A_{k+1}-u_k)
       \langle\nabla f(\xi_{k-1}(A_{k+1})),\nabla f(x^k)\rangle
       \mid\mathcal G_k,V_k]\\
 = \ &\frac{(A_{k+1}-A_k)^2}{2L}
       \E[\|\nabla f(x^k)\|^2\mid\mathcal G_k,V_k]-\frac{A_{k+1}-A_k}{2L}
       \E[(A_{k+1}-u_k)\|\nabla f(x^k)\|^2
       \mid\mathcal G_k,V_k]\\
 &-\frac{A_{k+1}-A_k}{2L}
       \E[A_{k+1}-u_k\mid\mathcal G_k,V_k]
       \|\nabla f(\xi_{k-1}(A_{k+1}))\|^2\\
 &+\frac{A_{k+1}-A_k}{2L}
       \E[(A_{k+1}-u_k)
       \|\nabla f(x^k)-\nabla f(\xi_{k-1}(A_{k+1}))\|^2
       \mid\mathcal G_k,V_k]\\
 = \ &\frac{A_{k+1}-A_k}{2L}
       \E[(u_k-A_k)\|\nabla f(x^k)\|^2
       \mid\mathcal G_k,V_k]-\frac{A_{k+1}-A_k}{2L}
       \E[A_{k+1}-u_k\mid\mathcal G_k,V_k]
       \|\nabla f(\xi_{k-1}(A_{k+1}))\|^2\\
 &+\frac{A_{k+1}-A_k}{2L}
       \E[(A_{k+1}-u_k)
       \|\nabla f(x^k)-\nabla f(\xi_{k-1}(A_{k+1}))\|^2
       \mid\mathcal G_k,V_k].
\end{align*}
Since $u_k$ is conditionally uniform on $[A_k,A_{k+1}]$ and $x^k=\xi_{k-1}(u_k)$, it follows
\begin{align*}
 \E[(u_k-A_k)\|\nabla f(x^k)\|^2\mid\mathcal G_k,V_k]
 &=\frac1{A_{k+1}-A_k}
   \int_{A_k}^{A_{k+1}}(s-A_k)
   \|\nabla f(\xi_{k-1}(s))\|^2\dd s,\\
 \E[A_{k+1}-u_k\mid\mathcal G_k,V_k]
 &=\frac{A_{k+1}-A_k}{2}.
\end{align*}
Therefore, we obtain
\begin{align*}
 &\frac{(A_{k+1}-A_k)^2}{2L}
       \E[\|\nabla f(x^k)\|^2\mid\mathcal G_k,V_k] \\
       &-\frac{A_{k+1}-A_k}{L}
       \E[(A_{k+1}-u_k)
       \langle\nabla f(\xi_{k-1}(A_{k+1})),\nabla f(x^k)\rangle
       \mid\mathcal G_k,V_k]\\
 =\ &\frac1{2L}\int_{A_k}^{A_{k+1}}(s-A_k)
       \|\nabla f(\xi_{k-1}(s))\|^2\dd s
       -\frac{(A_{k+1}-A_k)^2}{4L}
       \|\nabla f(\xi_{k-1}(A_{k+1}))\|^2\\
 &+\frac{A_{k+1}-A_k}{2L}
       \E[(A_{k+1}-u_k)
       \|\nabla f(x^k)-\nabla f(\xi_{k-1}(A_{k+1}))\|^2
       \mid\mathcal G_k,V_k].
\end{align*}
Taking $s=u_k$ and $t=A_{k+1}$ in \eqref{eq:random-gradient-difference} gives
\[
 \|\nabla f(x^k)-\nabla f(\xi_{k-1}(A_{k+1}))\|
 \le\frac{L(A_{k+1}-u_k)}{A_k^3}\|p^k\|,
\]
which implies
\begin{align*}
    \begin{aligned}
        &\frac{A_{k+1}-A_k}{2L}
       \E[(A_{k+1}-u_k)
       \|\nabla f(x^k)-\nabla f(\xi_{k-1}(A_{k+1}))\|^2
       \mid\mathcal G_k,V_k]\\
 \le \ &\frac{L(A_{k+1}-A_k)}{2A_k^6}\|p^k\|^2
       \E[(A_{k+1}-u_k)^3\mid\mathcal G_k,V_k]\\
 = \ &\frac{L(A_{k+1}-A_k)^4}{8A_k^6}\|p^k\|^2\\
 = \ &\frac{A_{k+1}-A_k}{L}
       \frac{(A_{k+1}-A_k)^3}{8A_k}
       \frac{L^2\|p^k\|^2}{A_k^5}\\
 \le\ &\frac{A_{k+1}-A_k}{512^3L}
       \frac{L^2\|p^k\|^2}{A_k^5}, 
    \end{aligned}
\end{align*}
where the equality uses $\E[(A_{k+1}-u_k)^3\mid\mathcal G_k,V_k]=(A_{k+1}-A_k)^3/4$ from Lemma~\ref{lem:technical-interval-calculus}, and the last inequality follows from
$(A_{k+1}-A_k)^3/A_k\le1/256^3$ in \eqref{eq:random-local-scales}. To this end, we obtain
\begin{equation}\label{eq:randomized-boundary-I}
    \begin{aligned}
        &\frac{(A_{k+1}-A_k)^2}{2L}
       \E[\|\nabla f(x^k)\|^2\mid\mathcal G_k,V_k] \\
       &-\frac{A_{k+1}-A_k}{L}
       \E[(A_{k+1}-u_k)
       \langle\nabla f(\xi_{k-1}(A_{k+1})),\nabla f(x^k)\rangle
       \mid\mathcal G_k,V_k]\\
 \le \ &\frac1{2L}\int_{A_k}^{A_{k+1}}(s-A_k)
       \|\nabla f(\xi_{k-1}(s))\|^2\dd s
       -\frac{(A_{k+1}-A_k)^2}{4L}
       \|\nabla f(\xi_{k-1}(A_{k+1}))\|^2 \\
       &+\frac{A_{k+1}-A_k}{512^3L}
       \frac{L^2\|p^k\|^2}{A_k^5}.
    \end{aligned}
\end{equation}
We now bound the last three terms in \eqref{eq:randomized-potential-terms}. For the first one of these three terms, we have
\begin{align*}
 \frac{2(A_{k+1}-A_k)^2}{A_k^3}\|p^k\|\|\nabla f(x^k)\| = \ &\frac{A_{k+1}-A_k}{L}
   \frac{A_{k+1}-A_k}{\sqrt{A_k}}
   2\|\nabla f(x^k)\|\frac{L\|p^k\|}{A_k^{5/2}}\\
 \le \ &\frac{A_{k+1}-A_k}{L}
   \frac{A_{k+1}-A_k}{\sqrt{A_k}}
   \left(\|\nabla f(x^k)\|^2+
   \frac{L^2\|p^k\|^2}{A_k^5}\right)\\
 \le \ &\frac{A_{k+1}-A_k}{256L}
   \left(\|\nabla f(x^k)\|^2+
   \frac{L^2\|p^k\|^2}{A_k^5}\right),
\end{align*}
where we used Young's inequality and \eqref{eq:random-local-scales}. Taking conditional expectations gives
\begin{equation}\label{eq:randomized-boundary-II}
  \frac{2(A_{k+1}-A_k)^2}{A_k^3}\|p^k\|
   \E[\|\nabla f(x^k)\|\mid\mathcal G_k,V_k] \le\frac{A_{k+1}-A_k}{256L}
   \left(\E[\|\nabla f(x^k)\|^2\mid\mathcal G_k,V_k]
   +\frac{L^2\|p^k\|^2}{A_k^5}\right).  
\end{equation}
For the second term of these three terms, \eqref{eq:random-local-scales} gives
\begin{align*}
 \frac{(A_{k+1}-A_k)^4}{2LA_k}\|\nabla f(x^k)\|^2= \ &\frac{A_{k+1}-A_k}{2L}\frac{(A_{k+1}-A_k)^3}{A_k}\|\nabla f(x^k)\|^2 \\
 \le \ &\frac{A_{k+1}-A_k}{2\cdot256^3L}\|\nabla f(x^k)\|^2 \\
 = \ &\frac{4(A_{k+1}-A_k)}{512^3L}\|\nabla f(x^k)\|^2.
\end{align*}
Therefore, we obtain
\begin{equation}\label{eq:randomized-boundary-III}
   \frac{(A_{k+1}-A_k)^4}{2LA_k}
 \E[\|\nabla f(x^k)\|^2\mid\mathcal G_k,V_k]
 \le\frac{4(A_{k+1}-A_k)}{512^3L}
 \E[\|\nabla f(x^k)\|^2\mid\mathcal G_k,V_k].  
\end{equation}
For the last term, by using \eqref{eq:random-gradient-difference} and $A_{k+1}-u_k\le A_{k+1}-A_k$, we derive
\[
 \|\nabla f(\xi_{k-1}(A_{k+1}))\| \le\|\nabla f(x^k)\|+\frac{L(A_{k+1}-A_k)}{A_k^3}\|p^k\|,
\]
which further implies
\begin{align*}
 &\frac{(A_{k+1}-A_k)^3}{LA_k}
   \|\nabla f(\xi_{k-1}(A_{k+1}))\|\|\nabla f(x^k)\|\\
 \le \ &\frac{(A_{k+1}-A_k)^3}{LA_k}\|\nabla f(x^k)\|^2
   +\frac{(A_{k+1}-A_k)^4}{A_k^4}\|p^k\|\|\nabla f(x^k)\|\\
 = \ &\frac{A_{k+1}-A_k}{L}
   \left[
   \frac{(A_{k+1}-A_k)^2}{A_k}\|\nabla f(x^k)\|^2
   +\frac{(A_{k+1}-A_k)^3}{A_k^{3/2}}
    \|\nabla f(x^k)\|\frac{L\|p^k\|}{A_k^{5/2}}
   \right]\\
 \le \ &\frac{A_{k+1}-A_k}{L}
   \left[
   \frac1{65536}\|\nabla f(x^k)\|^2
   +\frac1{2\cdot256^3}
    \left(\|\nabla f(x^k)\|^2+
          \frac{L^2\|p^k\|^2}{A_k^5}\right)
   \right]\\
 \le \ &\frac{A_{k+1}-A_k}{L}
   \left(\frac1{65536}+\frac4{512^3}\right)
   \left(\|\nabla f(x^k)\|^2+
         \frac{L^2\|p^k\|^2}{A_k^5}\right).
\end{align*}
The above constants follows from $(A_{k+1}-A_k)^2/A_k\le1/256^2=1/65536$ and
$(A_{k+1}-A_k)^3/A_k^{3/2}\le1/256^3$. Taking conditional expectations yields
\begin{equation}\label{eq:randomized-boundary-IV}
    \begin{aligned}
        &\frac{(A_{k+1}-A_k)^3}{LA_k}
   \|\nabla f(\xi_{k-1}(A_{k+1}))\|
   \E[\|\nabla f(x^k)\|\mid\mathcal G_k,V_k] \\
   \le \ &\frac{A_{k+1}-A_k}{L}
   \left(\frac1{65536}+\frac4{512^3}\right)
   \left(\E[\|\nabla f(x^k)\|^2\mid\mathcal G_k,V_k]
         +\frac{L^2\|p^k\|^2}{A_k^5}\right). 
    \end{aligned}
\end{equation}
By combining the four inequalities \eqref{eq:randomized-boundary-I}, \eqref{eq:randomized-boundary-II}, \eqref{eq:randomized-boundary-III}, and \eqref{eq:randomized-boundary-IV}, we obtain
\begin{equation}\label{eq:randomized-boundary-four-combined}
\begin{aligned}
    &\E[\Psi_{k+1}\mid\mathcal G_k,V_k]-\Psi_k \\
    \le \ &-\frac{A_{k+1}-A_k}{2L}
       \E[\|\nabla f(x^k)\|^2\mid\mathcal G_k,V_k]
       -\frac L4(A_k^{-4}-A_{k+1}^{-4})\|p^k\|^2\\
       &+\frac1{2L}\int_{A_k}^{A_{k+1}}(s-A_k)
       \|\nabla f(\xi_{k-1}(s))\|^2\dd s
       -\frac{(A_{k+1}-A_k)^2}{4L}
       \|\nabla f(\xi_{k-1}(A_{k+1}))\|^2 \\
       &+\frac{A_{k+1}-A_k}{128L}
 \left(\E[\|\nabla f(x^k)\|^2\mid\mathcal G_k,V_k]
       +\frac{L^2\|p^k\|^2}{A_k^5}\right).
\end{aligned}
\end{equation}
It remains to bound the two negative terms in \eqref{eq:randomized-boundary-four-combined}. By \eqref{eq:random-momentum-dissipation}, we have
\begin{align*}
 -\frac L4(A_k^{-4}-A_{k+1}^{-4})\|p^k\|^2 \le-\frac{L(A_{k+1}-A_k)}{2A_k^5}\|p^k\|^2 =-\frac{A_{k+1}-A_k}{2L}\frac{L^2\|p^k\|^2}{A_k^5}.
\end{align*}
Consequently, it follows
\begin{align*}
 &-\frac{A_{k+1}-A_k}{2L}
   \E[\|\nabla f(x^k)\|^2\mid\mathcal G_k,V_k]
   -\frac L4(A_k^{-4}-A_{k+1}^{-4})\|p^k\|^2 \\
   \le \ &-\frac{A_{k+1}-A_k}{2L} \left(\E[\|\nabla f(x^k)\|^2\mid\mathcal G_k,V_k]+\frac{L^2\|p^k\|^2}{A_k^5}\right).
\end{align*}
Substituting the above inequality to \eqref{eq:randomized-boundary-four-combined} completes the proof.
\end{proof}

To proceed, we now handle the term 
\begin{align*}
   \frac1{2L}\int_{A_k}^{A_{k+1}}(s-A_k)\|\nabla f(\xi_{k-1}(s))\|^2\dd s-\frac{(A_{k+1}-A_k)^2}{4L}\|\nabla f(\xi_{k-1}(A_{k+1}))\|^2 
\end{align*}
in the right-hand side of \eqref{eq:random-signed-defect}. \begin{lemma}\label{lem:random-perturbation}
For every bounded stopping time $\tau$ with respect to $\{\mathcal G_k\}_{k\ge0}$, we have
\begin{align*}
 &\E\Bigg[\sum_{k=0}^{\tau-1}\Big[\frac1{2L}\int_{A_k}^{A_{k+1}}(s-A_k)\|\nabla f(\xi_{k-1}(s))\|^2\dd s-\frac{(A_{k+1}-A_k)^2}{4L}\|\nabla f(\xi_{k-1}(A_{k+1}))\|^2\Big]\Bigg]\\
 \le \ &\frac18\Psi_0+\frac18\E[\Psi_\tau]+\frac1{4L}\E\sum_{k=0}^{\tau-1}(A_{k+1}-A_k)\Big(\|\nabla f(x^k)\|^2+\frac{L^2\|p^k\|^2}{A_k^5}\Big).
\end{align*}
\end{lemma}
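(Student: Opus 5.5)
The plan is to rewrite each summand as a signed integral of $\varphi_k'(s)$, where $\varphi_k(s)\coloneqq\|\nabla f(\xi_{k-1}(s))\|^2$. Then I average over $V_k$ so the resulting weight is a fixed (given $\mathcal G_k$) mean-zero kernel, and telescope across consecutive intervals. The quantities $\Psi_0$ and $\Psi_\tau$ should appear only as boundary leftovers of that telescoping, and optional stopping handles the bounded stopping time $\tau$. Write $h_k=A_{k+1}-A_k=c_kV_k$ with $c_k=A_k^{1/3}/512$, which is $\mathcal G_k$-measurable. I am not certain the telescoping step closes with the stated constants; that is the main risk.

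\emph{Step 1 (integration by parts).} Integrating $\int_{A_k}^{A_{k+1}}(s-A_k)\varphi_k(s)\dd s$ by parts gives
\[
\frac1{2L}\int_{A_k}^{A_{k+1}}(s-A_k)\varphi_k(s)\dd s-\frac{h_k^2}{4L}\varphi_k(A_{k+1})
=-\frac1{4L}\int_{A_k}^{A_{k+1}}(s-A_k)^2\varphi_k'(s)\dd s .
\]
So each summand is a signed quantity of size $h_k^2$ times the variation of $\varphi_k$ across the interval; no sign information is available.

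\emph{Step 2 (average over the random boundary).} Conditional on $\mathcal G_k$, the curve $\xi_{k-1}$ and $c_k$ are fixed, and $\varphi_k$ is defined for all $s\ge A_k$. Using $\mathbb P(c_kV_k>t)$, the conditional expectation becomes
\[
-\frac1{4L}\int_0^{2c_k}w_k(t)\,\varphi_k'(A_k+t)\dd t,\qquad w_k(t)=t^2\min\{1,\,2-t/c_k\}.
\]
Since $w_k(0)=w_k(2c_k)=0$, a second integration by parts gives $\frac1{4L}\int_0^{2c_k}w_k'(t)\varphi_k(A_k+t)\dd t$ with no boundary terms, and $\int_0^{2c_k}w_k'=0$. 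This is the point of randomizing the boundaries: on a deterministic grid the endpoint term in Step 1 has no averaging kernel, and the sharp cutoff is what forced $\theta_k$ small in Lemma~\ref{lem:local}.

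\emph{Step 3 (telescoping across intervals; main obstacle).} A crude bound via $\varphi_k\le 2L\Psi_k/A_k$ gives $\cO(c_k^2\Psi_k/A_k)$ per step, which is not summable. Subtracting $\varphi_k(A_k)$ and using \eqref{eq:random-gradient-difference} also fails, since it yields $h_k\cdot(h_k^2/A_k^{1/2})\cdot\|\nabla f\|\,L\|p^k\|/A_k^{5/2}$ and $h_k^2/A_k^{1/2}$ grows. So the cancellation must come from matching the negative lobe of $w_k'$ near $t=2c_k$ with the positive part of the next kernel $w_{k+1}'$ near $A_{k+1}$. The two curves $\xi_{k-1}$ and $\xi_k$ differ at a common time by \eqref{eq:random-curve-change}, which is $\cO(h_k/L)\|\nabla f(x^k)\|$. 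Hence the mismatch $|\varphi_{k+1}-\varphi_k|$ on the overlap is bounded by $h_k\|\nabla f(x^k)\|$ times gradient norms, plus the momentum term \eqref{eq:random-gradient-difference}.

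I would try first to bound these mismatch errors by Young's inequality in the form $\frac{h_k}{4L}\big(\|\nabla f(x^k)\|^2+L^2\|p^k\|^2/A_k^5\big)$, with the small ratios from \eqref{eq:random-local-scales} absorbing constants. This is exactly the dissipation budget on the right-hand side of the lemma. Turning conditional expectations of curve-evaluated gradients into $\|\nabla f(x^k)\|^2$ uses the uniform sampling of $u_k$ given $(\mathcal G_k,V_k)$.

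\emph{Step 4 (boundary terms and stopping).} The unmatched pieces at $k=0$ and $k=\tau$ are each at most $\frac{c^2}{L}\varphi(A)\lesssim c^2\Psi/A$, by \eqref{eq:random-momentum-control}. Since $c_k^2/A_k\le 512^{-2}$, each is at most $\frac18\Psi_0$, respectively $\frac18\Psi_\tau$. Finally, because $\tau$ is bounded, the sum has finitely many terms. Summing the conditional bounds and applying the tower property (optional stopping for the martingale differences) then gives the stated expectation inequality.
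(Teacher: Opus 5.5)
Your Steps 1--2 are correct. With $t=c_k\tau$, the averaged summand is $\frac{c_k^2}{L}\int_0^2 b(\tau)\varphi_k(A_k+c_k\tau)\dd\tau$, with exactly the mean-zero kernel $b$ the paper uses ($\tau/2$ on $[0,1)$, $\tau-3\tau^2/4$ on $[1,2]$). Your estimate that any single-interval bound overshoots the budget by a factor $\sim A_k^{1/6}/512^2$ is also right.

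The gap is Step 3, which is where the whole proof lives. Matching the negative lobe of the $k$-th kernel against the positive part of the $(k+1)$-st does not cancel, because the $(k+1)$-st kernel is centered at the random point $A_{k+1}=A_k+c_kV_k$. Concretely, suppose you telescope a $\mathcal G_k$-measurable corrector $R_k=\frac{c_k^2}{L}\int g(\tau)\varphi_k(A_k+c_k\tau)\dd\tau$, with $g$ of bounded support, against $R_{k+1}$. After averaging over $V_k$ (and ignoring the curve and scale changes), the uncancelled part is $\frac{c_k^2}{L}\int\rho(\tau)\varphi_k(A_k+c_k\tau)\dd\tau$ with $\rho=b-g+g*\mu$ and $\mu=\mathrm{Unif}[1,2]$. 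This $\rho$ is a nonzero mean-zero kernel. Since you only have Lipschitz control of $\varphi_k$, it costs the same order $c_k^3\|\nabla f\|\,\|p^k\|/A_k^3$ that you already showed is unaffordable. Among such correctors, the residual vanishes only if $g$ solves the renewal equation $\kappa(t)=b(t)+\int_1^2\kappa(t-v)\dd v$, i.e.\ $\kappa=\sum_{n\ge0}b*\mu^{*n}$. This $\kappa$ is not compactly supported: each lobe-matching leaves a leftover that propagates to intervals $k+2,k+3,\dots$ and dies out only geometrically.

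This renewal kernel is the missing idea, and it changes the structure of the rest of the argument. It gives an exact identity: the conditional expectation of the $k$-th summand given $\mathcal G_k$ equals $\E[T_k-\tilde T_k\mid\mathcal G_k]$. Here $T_k=\frac{A_k^{2/3}}{512^2L}\int_0^\infty\kappa(t)\varphi_k(A_k+c_kt)\dd t$, and $\tilde T_k$ is the same expression with $A_k+c_kt$ replaced by $A_{k+1}+c_kt$. Summing over $k<\tau$ leaves $T_0-T_\tau$ plus the mismatches $T_{k+1}-\tilde T_k$. These come from the curve change \eqref{eq:random-curve-change} and the scale change $A_k^{1/3}\to A_{k+1}^{1/3}$, and that is where your Young-inequality estimates against $\frac{h_k}{4L}(\|\nabla f(x^k)\|^2+L^2\|p^k\|^2/A_k^5)$ belong.

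However, all of these are integrals along the extended curves over an unbounded horizon, on which $\varphi_k(A_k+c_kt)$ grows like $L(1+t)^2\Psi_k/A_k$. So you also need the weighted decay $\int_0^\infty(1+t)^2|\kappa(t)|\dd t<256$. The paper obtains it from $\int b=0$, which turns the recursion into $\kappa(t)=-\int_2^3(3-v)\kappa(t-v)\dd v$ for $t\ge3$ and halves $\sup|\kappa|$ every three units. Your Step 4 bounds the end pieces by a single-point quantity $\cO(c^2\varphi(A)/L)$, which presupposes the finite-range cancellation that does not exist. The $\frac18\Psi_0$ and $\frac18\Psi_\tau$ terms are really bounds on $|T_0|$ and $|T_\tau|$, and they rely on this decay. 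Once $\kappa$ and its decay are in place, your stopping-time bookkeeping goes through as you describe.
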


\begin{proof}
Conditionally on $\mathcal G_k$, the incoming curve and $A_k$ are fixed. By changing variables to $s=A_k+A_k^{1/3}t/512$ and integrating with respect to $V_k$, we have
\begin{align*}
 &\E\Big[\frac1{2L}\int_{A_k}^{A_{k+1}}(s-A_k)\|\nabla f(\xi_{k-1}(s))\|^2\dd s-\frac{(A_{k+1}-A_k)^2}{4L}\|\nabla f(\xi_{k-1}(A_{k+1}))\|^2\mid\mathcal G_k\Big]\\
 = \ &\frac{A_k^{2/3}}{512^2L}\Big[\int_0^1\frac t2\|\nabla f(\xi_{k-1}(A_k+\tfrac{A_k^{1/3}t}{512}))\|^2\dd t+\int_1^2\Big(t-\frac{3t^2}{4}\Big)\|\nabla f(\xi_{k-1}(A_k+\tfrac{A_k^{1/3}t}{512}))\|^2\dd t\Big]\\
 = \ &\frac{A_k^{2/3}}{512^2L}\E\Big[\int_0^\infty\kappa(t)\Big(\|\nabla f(\xi_{k-1}(A_k+\tfrac{A_k^{1/3}t}{512}))\|^2-\|\nabla f(\xi_{k-1}(A_{k+1}+\tfrac{A_k^{1/3}t}{512}))\|^2\Big)\dd t\mid\mathcal G_k\Big].
\end{align*}
By \eqref{eq:random-gradient-difference}, we have
\begin{equation*}
 \|\nabla f(\xi_{k-1}(A_k+\tfrac{A_k^{1/3}t}{512}))\|\le\|\nabla f(\xi_{k-1}(A_k))\|+\frac{Lt\|p^k\|}{512A_k^{8/3}},
\end{equation*}
which implies
\begin{align*}
    \|\nabla f(\xi_{k-1}(A_k+\tfrac{A_k^{1/3}t}{512}))\|^2 \le2\|\nabla f(\xi_{k-1}(A_k))\|^2+2\Bigg(\frac{Lt\|p^k\|}{512A_k^{8/3}}\Bigg)^2 \le \Big(\frac{4L}{A_k} + \frac{8Lt^2}{512^2A_k^{4/3}}\Big)\Psi_k,
\end{align*}
where \eqref{eq:random-momentum-control} is used. Consequently, we obtain
\begin{align*}
    \frac{A_k^{2/3}}{512^2L}\int_0^\infty|\kappa(t)|\|\nabla f(\xi_{k-1}(A_k+\tfrac{A_k^{1/3}t}{512}))\|^2\dd t \le \Big(\frac4{512^2A_k^{1/3}}+\frac8{512^4A_k^{2/3}}\Big)\Psi_k\int_0^\infty(1+t)^2|\kappa(t)|\dd t\le\frac18\Psi_k.
\end{align*}
It remains to control the change of curve between consecutive terms. For $t\ge0$ and $A_{k+1}\le s\le A_{k+1}+A_{k+1}^{1/3}t/512$, \eqref{eq:random-local-scales} and \eqref{eq:random-scale-change} give $s-u_k\le A_k^{1/3}(1+t)/256$. By \eqref{eq:random-gradient-difference} and \eqref{eq:random-curve-change}, we have
\begin{align*}
 \|\nabla f(\xi_{k-1}(s))\|
 &\le\|\nabla f(x^k)\|+\frac{L(s-u_k)\|p^k\|}{A_k^3}\le\sqrt2(1+t)\Big(\|\nabla f(x^k)\|^2+\frac{L^2\|p^k\|^2}{A_k^5}\Big)^{1/2},\\
 \|\nabla f(\xi_k(s))-\nabla f(\xi_{k-1}(s))\|
 &\le\frac{(A_{k+1}-A_k)(s-u_k)}{A_k}\|\nabla f(x^k)\|\le\frac{1+t}{65536A_k^{1/3}}\|\nabla f(x^k)\|.
\end{align*}
In particular, it follows
\begin{align*}
   \|\nabla f(\xi_k(s))\|\le2\sqrt2(1+t)\Big(\|\nabla f(x^k)\|^2+\frac{L^2\|p^k\|^2}{A_k^5}\Big)^{1/2}. 
\end{align*}
Therefore, on the one hand, applying the triangle inequality gives
\begin{align*}
 &\Big\|A_{k+1}^{1/3}\nabla f(\xi_k(A_{k+1}+\tfrac{A_{k+1}^{1/3}t}{512}))-A_k^{1/3}\nabla f(\xi_{k-1}(A_{k+1}+\tfrac{A_k^{1/3}t}{512}))\Big\|\\
 \le \ &\Big\|(A_{k+1}^{1/3}-A_k^{1/3})\nabla f(\xi_k(A_{k+1}+\tfrac{A_{k+1}^{1/3}t}{512}))\Big\| + \Big\|A_k^{1/3}\big(\nabla f(\xi_k(A_{k+1}+\tfrac{A_{k+1}^{1/3}t}{512})) - \nabla f(\xi_{k-1}(A_{k+1}+\tfrac{A_{k+1}^{1/3}t}{512}))\big)\Big\| \\
 &+\Big\|A_k^{1/3}\big(\nabla f(\xi_{k-1}(A_{k+1}+\tfrac{A_{k+1}^{1/3}t}{512})) - \nabla f(\xi_{k-1}(A_{k+1}+\tfrac{A_{k}^{1/3}t}{512}))\big)\Big\|&\\
 \le \ &2\sqrt2(A_{k+1}^{1/3}-A_k^{1/3})(1+t)\Big(\|\nabla f(x^k)\|^2+\frac{L^2\|p^k\|^2}{A_k^5}\Big)^{1/2}+\frac{1+t}{65536}\|\nabla f(x^k)\|+\frac{L\|p^k\|(A_{k+1}^{1/3}-A_k^{1/3})t}{512A_k^{8/3}}\\
 \le \ &\frac{3(1+t)}{512}\Big(\|\nabla f(x^k)\|^2+\frac{L^2\|p^k\|^2}{A_k^5}\Big)^{1/2}.
\end{align*}
where the last inequality follows from \eqref{eq:random-scale-change} and $A_k \ge 1$. Since $A_{k+1}^{1/3}\le2A_k^{1/3}$, on the other hand, we have
\begin{align*}
    &A_{k+1}^{1/3}\|\nabla f(\xi_k(A_{k+1}+\tfrac{A_{k+1}^{1/3}t}{512}))\|+A_k^{1/3}\|\nabla f(\xi_{k-1}(A_{k+1}+\tfrac{A_k^{1/3}t}{512}))\| \\
    \le \ &8A_k^{1/3}(1+t)\Big(\|\nabla f(x^k)\|^2+\frac{L^2\|p^k\|^2}{A_k^5}\Big)^{1/2}.
\end{align*}
Therefore, we obtain
\begin{align*}
 &\frac1{512^2L}\Big|\int_0^\infty\kappa(t)\big(A_{k+1}^{2/3}\|\nabla f(\xi_k(A_{k+1}+\tfrac{A_{k+1}^{1/3}t}{512}))\|^2-A_k^{2/3}\|\nabla f(\xi_{k-1}(A_{k+1}+\tfrac{A_k^{1/3}t}{512}))\|^2\big)\dd t\Big|\\
 \le \ &\frac1{512^2L}\int_0^\infty|\kappa(t)|\cdot \Big|A_{k+1}^{2/3}\|\nabla f(\xi_k(A_{k+1}+\tfrac{A_{k+1}^{1/3}t}{512}))\|^2-A_k^{2/3}\|\nabla f(\xi_{k-1}(A_{k+1}+\tfrac{A_k^{1/3}t}{512}))\|^2\Big| \dd t\\
 \le \ &\frac1{512^2L}\int_0^\infty|\kappa(t)|\cdot \|A_{k+1}^{1/3}\nabla f(\xi_k(A_{k+1}+\tfrac{A_{k+1}^{1/3}t}{512}))-A_k^{1/3}\nabla f(\xi_{k-1}(A_{k+1}+\tfrac{A_k^{1/3}t}{512}))\| \\
 &\cdot \big(\|A_{k+1}^{1/3}\nabla f(\xi_k(A_{k+1}+\tfrac{A_{k+1}^{1/3}t}{512}))\| + \|A_k^{1/3}\nabla f(\xi_{k-1}(A_{k+1}+\tfrac{A_k^{1/3}t}{512}))\|\big) \dd t\\
 \le \ &\frac{24A_k^{1/3}}{512^3L}\Big(\|\nabla f(x^k)\|^2+\frac{L^2\|p^k\|^2}{A_k^5}\Big)\int_0^\infty(1+t)^2|\kappa(t)|\dd t\\
 \le \ &\frac{3A_k^{1/3}}{65536L}\Big(\|\nabla f(x^k)\|^2+\frac{L^2\|p^k\|^2}{A_k^5}\Big)\\
 \le \ &\frac{A_{k+1}-A_k}{4L}\Big(\|\nabla f(x^k)\|^2+\frac{L^2\|p^k\|^2}{A_k^5}\Big),
\end{align*}
where the last step uses $A_{k+1}-A_k\ge A_k^{1/3}/512$.

Finally, since $\tau$ is a stopping time with respect to $\{\mathcal G_k\}_{k\ge0}$, we have $\{k<\tau\}\in\mathcal G_k$. Therefore, using the tower property gives
\begin{align*}
    &\E\Big[\mathbf 1_{\{k<\tau\}}\Big(\frac1{2L}\int_{A_k}^{A_{k+1}}(s-A_k)\|\nabla f(\xi_{k-1}(s))\|^2\dd s-\frac{(A_{k+1}-A_k)^2}{4L}\|\nabla f(\xi_{k-1}(A_{k+1}))\|^2\Big)\Big]\\
    = \ &\frac1{512^2L}\E\Big[\mathbf 1_{\{k<\tau\}}A_k^{2/3}\int_0^\infty\kappa(t)\big(\|\nabla f(\xi_{k-1}(A_k+\frac{A_k^{1/3}t}{512}))\|^2-\|\nabla f(\xi_{k-1}(A_{k+1}+\frac{A_k^{1/3}t}{512}))\|^2\big)\dd t\Big].
\end{align*}
Since $\tau$ is bounded, summing over $k$ yields
\begin{align*}
    &\E\Big[\sum_{k=0}^{\tau-1}\big[\frac1{2L}\int_{A_k}^{A_{k+1}}(s-A_k)\|\nabla f(\xi_{k-1}(s))\|^2\dd s-\frac{(A_{k+1}-A_k)^2}{4L}\|\nabla f(\xi_{k-1}(A_{k+1}))\|^2\big]\Big]\\
    = \ &\frac1{512^2L}\E\sum_{k=0}^{\tau-1}\int_0^\infty\kappa(t)\Big[A_k^{2/3}\|\nabla f(\xi_{k-1}(A_k+\frac{A_k^{1/3}t}{512}))\|^2-A_k^{2/3}\|\nabla f(\xi_{k-1}(A_{k+1}+\frac{A_k^{1/3}t}{512}))\|^2\Big]\dd t,
\end{align*}
which further implies
\begin{align*}
    &\E\Big[\sum_{k=0}^{\tau-1}\big[\frac1{2L}\int_{A_k}^{A_{k+1}}(s-A_k)\|\nabla f(\xi_{k-1}(s))\|^2\dd s-\frac{(A_{k+1}-A_k)^2}{4L}\|\nabla f(\xi_{k-1}(A_{k+1}))\|^2\big]\Big]\\
    = \ &\frac1{512^2L}\E\Big[\sum_{k=0}^{\tau-1}\int_0^\infty\kappa(t)\big[A_k^{2/3}\|\nabla f(\xi_{k-1}(A_k+\frac{A_k^{1/3}t}{512}))\|^2-A_{k+1}^{2/3}\|\nabla f(\xi_k(A_{k+1}+\frac{A_{k+1}^{1/3}t}{512}))\|^2\big]\dd t\Big]\\
    &+\frac1{512^2L}\E\Big[\sum_{k=0}^{\tau-1}\int_0^\infty\kappa(t)\big[A_{k+1}^{2/3}\|\nabla f(\xi_k(A_{k+1}+\frac{A_{k+1}^{1/3}t}{512}))\|^2-A_k^{2/3}\|\nabla f(\xi_{k-1}(A_{k+1}+\frac{A_k^{1/3}t}{512}))\|^2\big]\dd t\Big].
\end{align*}
Since $A_0=1$ and $\xi_{-1}(s)=x^0$, it follows
\begin{align*}
     &\E\Big[\sum_{k=0}^{\tau-1}\big[\frac1{2L}\int_{A_k}^{A_{k+1}}(s-A_k)\|\nabla f(\xi_{k-1}(s))\|^2\dd s-\frac{(A_{k+1}-A_k)^2}{4L}\|\nabla f(\xi_{k-1}(A_{k+1}))\|^2\big]\Big]\\
    = \ &\frac1{512^2L}\int_0^\infty\kappa(t)\|\nabla f(x^0)\|^2\dd t-\E\Big[\frac{A_\tau^{2/3}}{512^2L}\int_0^\infty\kappa(t)\|\nabla f(\xi_{\tau-1}(A_\tau+\frac{A_\tau^{1/3}t}{512}))\|^2\dd t\Big]\\
    &+\frac1{512^2L}\E\Big[\sum_{k=0}^{\tau-1}\int_0^\infty\kappa(t)\big[A_{k+1}^{2/3}\|\nabla f(\xi_k(A_{k+1}+\frac{A_{k+1}^{1/3}t}{512}))\|^2-A_k^{2/3}\|\nabla f(\xi_{k-1}(A_{k+1}+\frac{A_k^{1/3}t}{512}))\|^2\big]\dd t\Big] \\
    \le \ &\frac18\Psi_0+\frac18\E[\Psi_\tau]+\frac1{4L}\E\sum_{k=0}^{\tau-1}(A_{k+1}-A_k)\Big(\|\nabla f(x^k)\|^2+\frac{L^2\|p^k\|^2}{A_k^5}\Big),
\end{align*}
which completes the proof.
\end{proof}

\begin{lemma}\label{lem:random-energy}
For every bounded stopping time $\tau$ with respect to $\{\mathcal G_k\}_{k\ge0}$, we have
\begin{equation*}
 \E[\Psi_\tau]+\frac1{2048L}\E\Big[\sum_{k=0}^{\tau-1}A_k^{1/3}\big(\|\nabla f(x^k)\|^2+\frac{L^2\|p^k\|^2}{A_k^5}\big)\Big]\le\frac97\Psi_0.
\end{equation*}
\end{lemma}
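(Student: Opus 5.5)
The plan is to sum the one-step inequality of Lemma~\ref{lem:random-signed-defect} up to the bounded stopping time $\tau$, and then absorb the signed discrepancy using Lemma~\ref{lem:random-perturbation}. Write
\[
D_k:=\|\nabla f(x^k)\|^2+\frac{L^2\|p^k\|^2}{A_k^5},
\]
and let $S_k$ denote the signed term $\frac1{2L}\int_{A_k}^{A_{k+1}}(s-A_k)\|\nabla f(\xi_{k-1}(s))\|^2\dd s-\frac{(A_{k+1}-A_k)^2}{4L}\|\nabla f(\xi_{k-1}(A_{k+1}))\|^2$.

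First, multiply \eqref{eq:random-signed-defect} by $\mathbf 1_{\{k<\tau\}}$. This indicator is $\mathcal G_k$-measurable, hence fixed under conditioning on $(\mathcal G_k,V_k)$. Taking total expectations via the tower property and summing over $k$ from $0$ to $N-1$, where $\tau\le N$, telescopes $\E[\Psi_{k+1}-\Psi_k]$ on $\{k<\tau\}$. This yields
\[
\E[\Psi_\tau]-\Psi_0\le \E\sum_{k<\tau}S_k-\frac{63}{128L}\E\sum_{k<\tau}(A_{k+1}-A_k)D_k .
\]
All terms are integrable because the sum is finite and each $\Psi_k$ has finite mean. This follows from the local bounds inside Lemma~\ref{lem:random-signed-defect}, or by a preliminary induction using $A_k$ bounded above on $\{k<N\}$. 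I regard this integrability as routine.

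Second, insert Lemma~\ref{lem:random-perturbation} to bound $\E\sum_{k<\tau}S_k$ by $\frac18\Psi_0+\frac18\E[\Psi_\tau]+\frac1{4L}\E\sum_{k<\tau}(A_{k+1}-A_k)D_k$. Since $\frac{63}{128}-\frac14=\frac{31}{128}$, we obtain
\[
\frac78\E[\Psi_\tau]+\frac{31}{128L}\E\sum_{k<\tau}(A_{k+1}-A_k)D_k\le\frac98\Psi_0 .
\]
Multiplying by $\frac87$ gives
\[
\E[\Psi_\tau]+\frac{31}{112L}\E\sum_{k<\tau}(A_{k+1}-A_k)D_k\le\frac97\Psi_0 .
\]

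Finally, use $A_{k+1}-A_k=A_k^{1/3}V_k/512\ge A_k^{1/3}/512$, since $V_k\ge1$. This gives $\frac{31}{112}(A_{k+1}-A_k)\ge\frac{31}{112\cdot512}A_k^{1/3}\ge\frac{1}{2048}A_k^{1/3}$, because $\frac{31}{112}\ge\frac14$. Since $D_k\ge0$, the stated inequality follows.

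The main obstacle is the first step. The signed term $S_k$ can be positive, so it cannot be dropped pathwise. We must therefore work at the level of expectations up to a stopping time, so that the telescoping structure in Lemma~\ref{lem:random-perturbation} applies. We must also justify moving $\frac18\E[\Psi_\tau]$ to the left-hand side, which requires $\E[\Psi_\tau]<\infty$. This holds since $\tau$ is bounded and each $\E[\Psi_k]<\infty$, which I would verify by induction from the one-step bound with crude constants.
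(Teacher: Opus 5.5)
Your proposal is correct and follows the paper's proof essentially step for step. Both multiply \eqref{eq:random-signed-defect} by $\mathbf 1_{\{k<\tau\}}$, telescope via the tower property, insert Lemma~\ref{lem:random-perturbation} to obtain the $\frac{31}{128}$ and then $\frac{31}{112}$ coefficients, and finish with $A_{k+1}-A_k\ge A_k^{1/3}/512$. Your integrability remark, which the paper leaves implicit, is also sound. In the rescaled form the coefficients $u_k^2$, $A_{k+1}-A_k$ and $\delta_k$ are bounded for each fixed $k$, so each $\Psi_k$ is in fact bounded pathwise by a deterministic constant.
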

\begin{proof}
Since $\tau$ is a stopping time with respect to $\{\mathcal G_k\}_{k\ge0}$, we have $\{k<\tau\}\in\mathcal G_k$. Hence $\mathbf 1_{\{k<\tau\}}$ is also measurable with respect to $\sigma(\mathcal G_k,V_k)$. Multiplying \eqref{eq:random-signed-defect} by $\mathbf 1_{\{k<\tau\}}$ and taking expectations gives
\begin{align*}
    &\E\big[\mathbf 1_{\{k<\tau\}}\big(\E[\Psi_{k+1}\mid\mathcal G_k,V_k]-\Psi_k\big)\big]\\
    \le \ &\E\Big[\mathbf 1_{\{k<\tau\}}\Big(\frac1{2L}\int_{A_k}^{A_{k+1}}(s-A_k)\|\nabla f(\xi_{k-1}(s))\|^2\dd s-\frac{(A_{k+1}-A_k)^2}{4L}\|\nabla f(\xi_{k-1}(A_{k+1}))\|^2\Big)\Big]\\
    &-\frac{63}{128L}\E\Big[\mathbf 1_{\{k<\tau\}}(A_{k+1}-A_k)\Big(\E[\|\nabla f(x^k)\|^2\mid\mathcal G_k,V_k]+\frac{L^2\|p^k\|^2}{A_k^5}\Big)\Big].
\end{align*}
Since $\mathbf 1_{\{k<\tau\}}$ is $\sigma(\mathcal G_k,V_k)$-measurable, the tower property gives $\E[\mathbf 1_{\{k<\tau\}}\E[\Psi_{k+1}\mid\mathcal G_k,V_k]]=\E[\mathbf 1_{\{k<\tau\}}\Psi_{k+1}]$, which implies
\begin{equation*}
    \E\big[\mathbf 1_{\{k<\tau\}}\big(\E[\Psi_{k+1}\mid\mathcal G_k,V_k]-\Psi_k\big)\big]=\E[\mathbf 1_{\{k<\tau\}}(\Psi_{k+1}-\Psi_k)].
\end{equation*}
Moreover, since $A_{k+1}-A_k$ is fixed conditionally on $(\mathcal G_k,V_k)$ and $\mathbf 1_{\{k<\tau\}}$ is $\mathcal G_k$-measurable, we also have
\begin{equation*}
    \E\big[\mathbf 1_{\{k<\tau\}}(A_{k+1}-A_k)\E[\|\nabla f(x^k)\|^2\mid\mathcal G_k,V_k]\big]=\E\big[\mathbf 1_{\{k<\tau\}}(A_{k+1}-A_k)\|\nabla f(x^k)\|^2\big].
\end{equation*}
Since $\tau$ is bounded, there exists a deterministic integer $N$ such that $\tau\le N$ almost surely. Summing the preceding inequality over $k=0,\ldots,N-1$ gives
\begin{equation*}
    \sum_{k=0}^{N-1}\mathbf 1_{\{k<\tau\}}(\Psi_{k+1}-\Psi_k)=\sum_{k=0}^{\tau-1}(\Psi_{k+1}-\Psi_k)=\Psi_\tau-\Psi_0.
\end{equation*}
Therefore, we obtain
\begin{align*}
    \E[\Psi_\tau]-\Psi_0
    \le \ &\E\Big[\sum_{k=0}^{\tau-1}\big[\frac1{2L}\int_{A_k}^{A_{k+1}}(s-A_k)\|\nabla f(\xi_{k-1}(s))\|^2\dd s-\frac{(A_{k+1}-A_k)^2}{4L}\|\nabla f(\xi_{k-1}(A_{k+1}))\|^2\big]\Big]\\
    &-\frac{63}{128L}\E\Big[\sum_{k=0}^{\tau-1}(A_{k+1}-A_k)\big(\|\nabla f(x^k)\|^2+\frac{L^2\|p^k\|^2}{A_k^5}\big)\Big],
\end{align*}
which further implies
\begin{align*}
    \E[\Psi_\tau]-\Psi_0 
    \le \ &\frac18\Psi_0+\frac18\E[\Psi_\tau]+\frac1{4L}\E\Big[\sum_{k=0}^{\tau-1}(A_{k+1}-A_k)\big(\|\nabla f(x^k)\|^2+\frac{L^2\|p^k\|^2}{A_k^5}\big)\Big]\\
    &-\frac{63}{128L}\E\sum_{k=0}^{\tau-1}(A_{k+1}-A_k)\Big(\|\nabla f(x^k)\|^2+\frac{L^2\|p^k\|^2}{A_k^5}\Big) \\
    = \ &\frac18\Psi_0+\frac18\E[\Psi_\tau]-\frac{31}{128L}\E\Big[\sum_{k=0}^{\tau-1}(A_{k+1}-A_k)\big(\|\nabla f(x^k)\|^2+\frac{L^2\|p^k\|^2}{A_k^5}\big)\Big].
\end{align*}
by applying Lemma~\ref{lem:random-perturbation}. Therefore, we obtain
\begin{equation*}
    \E[\Psi_\tau]+\frac{31}{112L}\E\sum_{k=0}^{\tau-1}(A_{k+1}-A_k)\Big(\|\nabla f(x^k)\|^2+\frac{L^2\|p^k\|^2}{A_k^5}\Big)\le\frac97\Psi_0.
\end{equation*}
Finally, \eqref{eq:random-local-scales} gives $\frac{31}{112}(A_{k+1}-A_k)\ge\frac14(A_{k+1}-A_k)\ge\frac{A_k^{1/3}}{2048}$. Substituting this bound into the above inequality gives
\begin{equation*}
    \E[\Psi_\tau]+\frac1{2048L}\E\sum_{k=0}^{\tau-1}A_k^{1/3}\Big(\|\nabla f(x^k)\|^2+\frac{L^2\|p^k\|^2}{A_k^5}\Big)\le\frac97\Psi_0,
\end{equation*}
which completes the proof.
\end{proof}

\subsection{Convergence rates}

\begin{proof}[Proof of Theorem~\ref{thm:random-anytime}]
At initialization, $A_0=1$, $p^0=0$, and $\xi_{-1}(s)=x^0$. Smoothness therefore gives $\Psi_0=f(x^0)-f^\star+L\|x^0-x^\star\|^2\le\frac32L\|x^0-x^\star\|^2$. For every deterministic $K\ge1$, by applying Lemma~\ref{lem:random-energy} with $\tau=K$ and dropping the nonnegative sum, we obtain 
\begin{align*}
   \E[\Psi_K]\le\frac97\Psi_0\le\frac{27}{14}L\|x^0-x^\star\|^2 
\end{align*}
Since $x^K=\xi_{K-1}(u_K)$ and $u_K\in[A_K,A_{K+1}]$, integrating $\xi_{K-1}'(s)=p^K/s^3$ gives $\|x^K-\xi_{K-1}(A_K)\|\le\frac{A_{K+1}-A_K}{A_K^3}\|p^K\|$. By using \eqref{eq:Lipschitz-square-root-optimality-gap} and \eqref{eq:random-momentum-control}, we obtain
\begin{align*}
 \sqrt{f(x^K)-f^\star} \le \sqrt{\frac{\Psi_K}{A_K}}+\sqrt{\frac L2}\frac{A_{K+1}-A_K}{A_K^3}\|p^K\| \le \Big(1+\frac{\sqrt2(A_{K+1}-A_K)}{\sqrt{A_K}}\Big)\sqrt{\frac{\Psi_K}{A_K}} \le \Big(1+\frac{\sqrt2}{256}\Big)\sqrt{\frac{\Psi_K}{A_K}},
\end{align*}
where the last step is from \eqref{eq:random-local-scales}. Applying \eqref{eq:random-grid-growth} gives
\begin{equation*}
 f(x^K)-f^\star\le\frac{2\Psi_K}{A_K}\le\frac{2\Psi_K}{(1+K/1024)^{3/2}},
\end{equation*}
which yields
\begin{equation*}
 \E[f(x^K)-f^\star]\le\frac{27L\|x^0-x^\star\|^2}{7(1+K/1024)^{3/2}}\le\frac{4L\|x^0-x^\star\|^2}{(1+K/1024)^{3/2}}\le\frac{131072L\|x^0-x^\star\|^2}{K^{3/2}}.
\end{equation*}

For the almost sure convergence , fix $M>0$ and an integer $N\ge1$. Apply Lemma~\ref{lem:random-energy} to the first index $k\in\{0,\ldots,N\}$ for which $\Psi_k>M$, or to $N$ if there is no such index. This is a bounded stopping time because $\Psi_k$ is $\mathcal G_k$-measurable. The stopped potential is nonnegative and exceeds $M$ on $\{\max_{0\le k\le N}\Psi_k>M\}$. Consequently, we have
\begin{align*}
   M \mathbb P\big(\max_{0\le k\le N}\Psi_k>M\big)\le\frac97\Psi_0.
\end{align*}
Letting $N\to\infty$ gives $\mathbb P(\sup_{k\ge0}\Psi_k>M)\le\frac{9\Psi_0}{7M}$, and letting $M\to\infty$ shows that $\sup_{k\ge0}\Psi_k<\infty$ almost surely. On this probability-one event, we obtain
\begin{equation*}
 \sup_{K\ge1}K^{3/2}[f(x^K)-f^\star]\le2\cdot1024^{3/2}\sup_{k\ge0}\Psi_k<\infty,
\end{equation*}
which completes the proof.
\end{proof}

\section{Concluding Remarks}\label{sec:conclusion}
This paper develops predefined randomized schedules for the heavy-ball method through two levels of randomization. With deterministic time boundaries, both fixed-time and anytime convergence are established with an $\cO(1/K^{4/3})$ convergence rate. Building on the anytime convergence framework, randomizing the boundaries improves the convergence rate to an $\mathcal O(1/K^{3/2})$ last-iterate guarantee, both in expectation and almost surely. Compared with the lower bound $\Omega(1/K^{(1+\sqrt{5})/2}\log K)$ in \cite{ma2026lower}, we conjecture that this lower bound is not tight and that the convergence rate $\mathcal O(1/K^{3/2})$ is already optimal for the heavy-ball method.

\section*{Acknowledgment}
The stepsize and momentum coefficient constructions in this paper were developed by \texttt{GPT-6 Pro} based on problem statements and follow-up instructions provided by the authors. The authors subsequently reorganized and rewrote all proofs in the manuscript and independently verified the correctness of the resulting arguments. The authors take full responsibility for the final manuscript.

\bibliographystyle{plainnat}
\bibliography{ref}

\appendix
\section{Technical Properties of the Time Boundaries}\label{app:time-grid-properties}

\begin{lemma}\label{lem:technical-interval-calculus}
Let $0<a<b$, set $h=b-a$, and let $U\sim\operatorname{Unif}[a,b]$. For any nonnegative integers $r,s$, it holds that
\begin{equation*}
 \E[(U-a)^r(b-U)^s]
 =h^{r+s}\frac{r!s!}{(r+s+1)!}.
\end{equation*}
Moreover, for every $m>0$, we have
\begin{equation}\label{eq:technical-reciprocal-power}
 \frac{a^{-m}-b^{-m}}{m}
 =\int_a^b t^{-m-1}\dd t
 \ge (b-a)b^{-m-1}.
\end{equation}
The same moment identities hold conditionally whenever $a$ and $b$ are fixed under the conditioning and $U$ is conditionally uniform on $[a,b]$.
\end{lemma}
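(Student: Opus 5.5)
For the moment identity, I would change variables to reduce to a Beta integral. With $U=a+hT$, the variable $T\sim\operatorname{Unif}[0,1]$, and $(U-a)^r(b-U)^s=h^{r+s}T^r(1-T)^s$. Hence
\[
\E[(U-a)^r(b-U)^s]=h^{r+s}\int_0^1 t^r(1-t)^s\dd t=h^{r+s}B(r+1,s+1)=h^{r+s}\frac{r!\,s!}{(r+s+1)!}.
\]
If I want to avoid citing the Beta–Gamma relation, I would prove $\int_0^1 t^r(1-t)^s\dd t=\frac{r!s!}{(r+s+1)!}$ directly. Integrating by parts gives $I(r,s)=\frac{s}{r+1}I(r+1,s-1)$. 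Iterating this down to $I(r+s,0)=\frac1{r+s+1}$ yields the claim. The specific cases the paper uses are immediate from this: $(1,0)$ gives $h/2$, $(1,1)$ gives $h^2/6$, $(0,2)$ gives $h^2/3$, and $(0,3)$ gives $h^3/4$.

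For the reciprocal-power inequality \eqref{eq:technical-reciprocal-power}, the equality is the fundamental theorem of calculus, since $\frac{\dd}{\dd t}(-t^{-m}/m)=t^{-m-1}$. The inequality follows because $t\mapsto t^{-m-1}$ is decreasing on $[a,b]$. The integrand is therefore at least $b^{-m-1}$, and integrating over an interval of length $b-a$ gives the bound.

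For the conditional version, suppose $a,b$ are measurable with respect to the conditioning $\sigma$-algebra $\mathcal H$, with $0<a<b$, and that the conditional law of $U$ given $\mathcal H$ is uniform on $[a,b]$. I would then apply the unconditional computation to the regular conditional distribution, evaluated pathwise at the fixed values $(a,b)$. In the paper's settings this needs no abstract machinery. One can write $U=a+(b-a)T$ with $T$ uniform and independent of $\mathcal H$: in the deterministic-boundary case $T=(u_k-A_k)/(A_{k+1}-A_k)$, and in the randomized case $T=U_k$, independent of $(\mathcal G_k,V_k)$. The freezing lemma (substitution for independent variables) then gives $\E[(U-a)^r(b-U)^s\mid\mathcal H]=(b-a)^{r+s}\E[T^r(1-T)^s]$, which reduces to the first part.

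No step is a real obstacle. The only point needing care is justifying the conditional statement, and the independent-representation argument above handles it cleanly.
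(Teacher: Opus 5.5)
Your proposal is correct and follows the paper's proof essentially step by step. Both reduce the moment identity to the Beta integral $\mathrm{B}(r+1,s+1)$ by the change of variables $t=(u-a)/h$, prove \eqref{eq:technical-reciprocal-power} by the fundamental theorem of calculus plus monotonicity of $t^{-m-1}$, and obtain the conditional statement from the deterministic computation; your integration-by-parts evaluation of the Beta integral and your representation $u_k=A_k+(A_{k+1}-A_k)T$ with $T$ independent of the conditioning spell out details the paper leaves implicit.
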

\begin{proof}
By the change of variables $t=(u-a)/h$, we have
\[
 \E[(U-a)^r(b-U)^s]
 =\frac1h\int_a^b(u-a)^r(b-u)^s\dd u
 =h^{r+s}\int_0^1t^r(1-t)^s\dd t.
\]
For nonnegative integers $r,s$, the last integral is the beta integral $\mathrm B(r+1,s+1)=r!s!/(r+s+1)!$, which proves the first identity. For \eqref{eq:technical-reciprocal-power}, integrate the derivative of $-t^{-m}/m$. Since $t\mapsto t^{-m-1}$ is decreasing on $(0,\infty)$, its value on $[a,b]$ is at least $b^{-m-1}$, which gives the lower bound. The conditional statement follows by applying the same deterministic calculation after conditioning.
\end{proof}

\subsection{Deterministic fixed-time coefficients}
The fixed-time construction uses
\[
 A_k=\left(1+\frac{k}{6K^{1/3}}\right)^2,
 \  0\le k\le K.
\]
\begin{lemma}\label{lem:technical-fixed-grid}
For every integer $K\ge1$, the fixed-time grid satisfies $A_k\ge1$ and, with $\theta_k=(A_{k+1}-A_k)/\sqrt{A_k}$, it holds that
\begin{equation}\label{eq:technical-fixed-grid}
 \theta_k
 =\frac{1}{3K^{1/3}}
  +\frac{1}{36K^{2/3}\left(1+\frac{k}{6K^{1/3}}\right)}
 \le\frac{13}{36K^{1/3}}<\frac38,
 \  0\le k<K.
\end{equation}
Consequently, we have
\begin{equation*}
 3\sum_{k=0}^{K-1}\theta_k^3<\frac17.
\end{equation*}
\end{lemma}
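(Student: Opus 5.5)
Write $c=6K^{1/3}$ and $r_k=1+k/c$, so that $A_k=r_k^2$. Since $k\ge0$, we have $r_k\ge1$ and hence $A_k\ge1$. The whole proof is a direct computation with this notation, and I expect no step to be a real obstacle.

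\textbf{Closed form for $\theta_k$.} Since $r_{k+1}=r_k+1/c$,
\[
A_{k+1}-A_k=(r_k+1/c)^2-r_k^2=\frac{2r_k}{c}+\frac1{c^2}.
\]
Dividing by $\sqrt{A_k}=r_k$ gives
\[
\theta_k=\frac2c+\frac1{c^2r_k}=\frac1{3K^{1/3}}+\frac1{36K^{2/3}r_k},
\]
which is the identity in \eqref{eq:technical-fixed-grid}.

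\textbf{Upper bound on $\theta_k$.} Use $r_k\ge1$ and $K^{2/3}\ge K^{1/3}$ (valid since $K\ge1$). Then
\[
\theta_k\le\frac1{3K^{1/3}}+\frac1{36K^{1/3}}=\frac{13}{36K^{1/3}}.
\]
Because $K^{1/3}\ge1$, this is at most $13/36$, and $13/36<3/8$ since $104<108$.

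\textbf{The cubic sum.} Apply the uniform bound to each of the $K$ terms:
\[
3\sum_{k=0}^{K-1}\theta_k^3\le 3K\Big(\frac{13}{36}\Big)^3\frac1K=\frac{3\cdot2197}{46656}=\frac{6591}{46656}.
\]
To compare with $1/7$, check that $7\cdot6591=46137<46656$. This gives $3\sum_{k=0}^{K-1}\theta_k^3<1/7$.
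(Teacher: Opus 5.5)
Your proof is correct and takes the same route as the paper: the substitution $r_k=1+k/(6K^{1/3})$ giving $A_{k+1}-A_k=2r_k/c+1/c^2$, the uniform bound from $r_k\ge1$ and $K\ge1$, and the termwise estimate $3K\bigl(13/(36K^{1/3})\bigr)^3$. Your check $7\cdot 6591=46137<46656$ makes explicit the numerical comparison that the paper leaves implicit.
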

\begin{proof}
Set $r_k=1+k/(6K^{1/3})$ and $d=1/(6K^{1/3})$. Then $A_k=r_k^2$, $r_{k+1}=r_k+d$, and
\[
 A_{k+1}-A_k=(r_k+d)^2-r_k^2=2dr_k+d^2.
\]
Dividing by $\sqrt{A_k}=r_k$ gives the equality in \eqref{eq:technical-fixed-grid}. Since $r_k\ge1$ and $K\ge1$, we have
\[
 \theta_k\le\frac{1}{3K^{1/3}}+\frac{1}{36K^{2/3}}
 \le\frac{13}{36K^{1/3}}<\frac38,
\]
which implies
\[
 3\sum_{k=0}^{K-1}\theta_k^3
 \le3K\left(\frac{13}{36K^{1/3}}\right)^3<\frac17.
\]
\end{proof}

\subsection{Deterministic anytime coefficients}
The deterministic anytime construction uses
\[
 A_k=\left(1+\frac{k}{12}\right)^{4/3},
 \  k\ge0.
\]
\begin{lemma}\label{lem:technical-anytime-grid}
For this grid, we have
\begin{equation}\label{eq:technical-anytime-grid}
 \theta_k\coloneqq\frac{A_{k+1}-A_k}{A_k^{1/4}}<\frac18.
\end{equation}
\end{lemma}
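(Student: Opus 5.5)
Set $r_k=1+k/12$, so that $A_k=r_k^{4/3}$, $A_k^{1/4}=r_k^{1/3}$ and $r_{k+1}=r_k+1/12$. The plan is to bound $A_{k+1}-A_k$ from above with the mean value theorem and then divide by $A_k^{1/4}$. There is a point $\rho\in[r_k,r_k+1/12]$ with
\begin{equation*}
 A_{k+1}-A_k=\frac{4}{3}\rho^{1/3}\cdot\frac1{12}=\frac{\rho^{1/3}}{9}\le\frac{(r_k+1/12)^{1/3}}{9},
\end{equation*}
because $t\mapsto t^{1/3}$ is increasing. Dividing by $r_k^{1/3}$ gives
\begin{equation*}
 \theta_k\le\frac19\Big(1+\frac{1}{12r_k}\Big)^{1/3}\le\frac19\Big(\frac{13}{12}\Big)^{1/3},
\end{equation*}
where the last step uses $r_k\ge1$.

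All that is left is the numerical comparison with $1/8$. The inequality $\frac19(13/12)^{1/3}<\frac18$ is equivalent to $13/12<(9/8)^3=729/512$. This holds because $13\cdot512=6656<8748=12\cdot729$. So $\theta_k<1/8$ for every $k\ge0$.

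No step is really an obstacle. The one thing to get right is the direction of the mean value bound: the intermediate point $\rho$ has to be bounded above by $r_{k+1}$, not below by $r_k$.

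The same parameterization also gives the bound on $A_K$ that Theorem~\ref{thm:convex} uses, although the lemma does not state it. Since $A_K=(1+K/12)^{4/3}\ge (K/12)^{4/3}$, we get $A_K\ge K^{4/3}/12^{4/3}$ directly.
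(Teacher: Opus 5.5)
Your proposal is correct and follows the paper's proof: the same substitution $r_k=1+k/12$, the mean value bound $A_{k+1}-A_k\le\frac19 r_{k+1}^{1/3}$, and division by $A_k^{1/4}=r_k^{1/3}$ to get $\theta_k\le\frac19\bigl(1+\frac{1}{12r_k}\bigr)^{1/3}$. Your explicit check $13\cdot512<12\cdot729$ fills in the numerical step the paper leaves implicit.
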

\begin{proof}
Let $r_k=1+k/12$, so $A_k=r_k^{4/3}$ and $A_k^{1/4}=r_k^{1/3}$. By the mean value theorem applied to $r\mapsto r^{4/3}$ on $[r_k,r_{k+1}]$, we have
\[
 A_{k+1}-A_k
 \le\frac43 r_{k+1}^{1/3}(r_{k+1}-r_k)
 =\frac19r_{k+1}^{1/3},
\]
which implies
\[
 \theta_k
 \le\frac19\left(\frac{r_{k+1}}{r_k}\right)^{1/3}
 =\frac19\left(1+\frac{1}{12r_k}\right)^{1/3}<\frac18.
\]
\end{proof}

\subsection{Randomized coefficients}
The randomized construction uses
\[
 A_{k+1}=A_k+\frac{A_k^{1/3}}{512}V_k,
 \  V_k\in[1,2],\  A_0=1.
\]
The following lemma collects the grid estimates used in Lemmas~\ref{lem:random-signed-defect} and~\ref{lem:random-perturbation} and the final conversion to deterministic iteration counts.

\begin{lemma}\label{lem:technical-random-grid}
For every realization and every $k\ge0$, we have
\begin{equation}\label{eq:random-local-scales}
 \frac{A_k^{1/3}}{512}\le A_{k+1}-A_k\le\frac{A_k^{1/3}}{256},
\ 
 \frac{A_{k+1}-A_k}{\sqrt{A_k}}\le\frac1{256},
\ 
 \frac{(A_{k+1}-A_k)^3}{A_k}\le\frac1{256^3}.
\end{equation}
Moreover, it holds that
\begin{equation}\label{eq:random-momentum-dissipation}
 \frac14(A_k^{-4}-A_{k+1}^{-4})
 \ge\frac{A_{k+1}-A_k}{2A_k^5},
\end{equation}
\begin{equation}\label{eq:random-scale-change}
 A_{k+1}^{1/3}\le2A_k^{1/3},\ 
 \frac{A_{k+1}^{1/3}-A_k^{1/3}}{512}
 \le\frac{A_{k+1}-A_k}{1536A_k^{2/3}}
 \le\frac2{3\cdot512^2A_k^{1/3}},
\end{equation}
and
\begin{equation}\label{eq:random-grid-growth}
 (1+k/1024)^{3/2}\le A_k\le(1+k/384)^{3/2}.
\end{equation}
\end{lemma}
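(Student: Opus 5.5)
The whole lemma is elementary. I would derive every estimate pathwise from the recursion $A_{k+1}-A_k=A_k^{1/3}V_k/512$ with $V_k\in[1,2]$ and $A_0=1$. The first step is to note $A_k\ge1$ for all $k$, which holds because the sequence is increasing from $A_0=1$. The first chain in \eqref{eq:random-local-scales} then follows directly from $1\le V_k\le2$. For the second and third bounds I would divide by $\sqrt{A_k}$ or $A_k$ and use $A_k\ge1$:
\[
\frac{A_{k+1}-A_k}{\sqrt{A_k}}\le\frac{A_k^{-1/6}}{256}\le\frac1{256},
\qquad
\frac{(A_{k+1}-A_k)^3}{A_k}\le\frac{1}{256^3}.
\]

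For \eqref{eq:random-momentum-dissipation}, I would apply \eqref{eq:technical-reciprocal-power} from Lemma~\ref{lem:technical-interval-calculus} with $m=4$, which gives $\frac14(A_k^{-4}-A_{k+1}^{-4})\ge (A_{k+1}-A_k)A_{k+1}^{-5}$. It then suffices to show $(A_{k+1}/A_k)^5\le2$. This holds because
\[
\frac{A_{k+1}}{A_k}=1+\frac{V_k}{512A_k^{2/3}}\le 1+\frac1{256},
\qquad
\Big(1+\frac1{256}\Big)^5<2.
\]

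For \eqref{eq:random-scale-change}, the same ratio bound gives $A_{k+1}^{1/3}\le(257/256)^{1/3}A_k^{1/3}\le2A_k^{1/3}$. The middle inequality comes from concavity of $s\mapsto s^{1/3}$:
\[
A_{k+1}^{1/3}-A_k^{1/3}\le\frac{A_{k+1}-A_k}{3A_k^{2/3}}.
\]
Dividing by $512$ produces the factor $1/1536$. The last inequality then follows from $A_{k+1}-A_k\le A_k^{1/3}/256$, which yields $\frac{1}{1536\cdot256\,A_k^{1/3}}=\frac{2}{3\cdot512^2A_k^{1/3}}$.

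The growth bound \eqref{eq:random-grid-growth} is the only step needing a global argument, and I expect it to be the main obstacle, mainly in getting the constants to come out as stated. I would track $B_k=A_k^{2/3}$. For the lower bound, convexity of $s\mapsto s^{2/3}$ is the wrong direction, so I would instead use concavity with the derivative evaluated at the right endpoint:
\[
B_{k+1}-B_k\ge\frac23A_{k+1}^{-1/3}(A_{k+1}-A_k)\ge\frac23\cdot\frac{A_k^{1/3}}{A_{k+1}^{1/3}}\cdot\frac{1}{512}.
\]
Since $A_{k+1}/A_k\le257/256$, the factor $A_k^{1/3}/A_{k+1}^{1/3}$ is close to $1$, so this increment exceeds $1/1024$ with room to spare. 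For the upper bound, concavity gives
\[
B_{k+1}-B_k\le\frac23A_k^{-1/3}\cdot\frac{A_k^{1/3}}{256}=\frac1{384}.
\]
Summing from $B_0=1$ gives $1+k/1024\le B_k\le1+k/384$, and raising to the power $3/2$ yields \eqref{eq:random-grid-growth}. Every bound is deterministic given $V_k\in[1,2]$, so each holds for every realization.
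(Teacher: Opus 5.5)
Your proof is correct and follows essentially the same route as the paper: the local scales come directly from $V_k\in[1,2]$ and $A_k\ge1$; the ratio bound $A_{k+1}/A_k\le 1+1/256$ gives the dissipation and scale-change estimates; concavity of $s^{1/3}$ gives the middle inequality; and the endpoint derivative bounds for $s\mapsto s^{2/3}$, with the finer ratio bound needed for the $1/1024$ increment, are summed to get the growth bound. One wording slip: $s^{2/3}$ is concave, not convex. The inequality you actually use, $g(b)-g(a)\ge g'(b)(b-a)$ for concave $g$, is the correct one.
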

\begin{proof}
The grid increases from $A_0=1$, and $V_k\in[1,2]$, which gives \eqref{eq:random-local-scales}. In particular, we have
\[
 \frac{A_{k+1}}{A_k}\le1+\frac1{256A_k^{2/3}}\le1+\frac1{256}.
\]
Thus $A_{k+1}^5<2A_k^5$, and \eqref{eq:technical-reciprocal-power} yields
\[
 \frac14(A_k^{-4}-A_{k+1}^{-4})
 \ge\frac{A_{k+1}-A_k}{A_{k+1}^5}
 \ge\frac{A_{k+1}-A_k}{2A_k^5}.
\]
The same ratio bound gives $A_{k+1}^{1/3}\le2A_k^{1/3}$. Note that $A_{k+1}^{1/3}-A_k^{1/3}\le(A_{k+1}-A_k)/(3A_k^{2/3})$ by concavity. Combining this with \eqref{eq:random-local-scales} implies \eqref{eq:random-scale-change}. Finally, the derivative bounds for the concave function $s^{2/3}$ give
\begin{align*}
 A_{k+1}^{2/3}-A_k^{2/3}
 &\le\frac{2(A_{k+1}-A_k)}{3A_k^{1/3}}\le\frac1{384},\\
 A_{k+1}^{2/3}-A_k^{2/3}
 &\ge\frac{2(A_{k+1}-A_k)}{3A_{k+1}^{1/3}}\ge\frac1{1024}.
\end{align*}
Summing from $0$ to $k-1$ yields \eqref{eq:random-grid-growth}.
\end{proof}

\section{Technical Properties of the Kernel}\label{app:kernel-properties}
This appendix supplies the scalar kernel used to prove the cumulative bound in Lemma~\ref{lem:random-perturbation}. Define
\begin{equation}\label{eq:random-kernel-data}
 b(t)=\begin{cases}
 t/2,&0\le t<1,\\
 t-3t^2/4,&1\le t\le2,\\
 0,&\text{otherwise},
 \end{cases}
\end{equation}
and define $\kappa(t)=0$ for $t<0$ and, recursively,
\begin{equation}\label{eq:random-kernel-definition}
 \kappa(t)=b(t)+\int_1^2\kappa(t-v)\dd v,\  t\ge0.
\end{equation}
The right-hand side only uses values at least one unit earlier, so the
recursion uniquely determines a locally bounded kernel on successive
unit intervals.

\begin{lemma}\label{lem:random-kernel}
The kernel in \eqref{eq:random-kernel-definition} satisfies $|\kappa(t)|\le\frac34\,2^{-\lfloor t/3\rfloor}$, $\forall t \ge 0$, and $\int_0^\infty(1+t)^2|\kappa(t)|\dd t<256$.
\end{lemma}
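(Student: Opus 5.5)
The plan is to compute $\kappa$ explicitly on its first two unit intervals. After that, a conserved weighted integral of the renewal-type recursion \eqref{eq:random-kernel-definition} should force sign changes in every window, which gives a geometric contraction of $\sup|\kappa|$. The integral bound then follows by summing over blocks.

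\emph{Step 1 (initial segment).} For $0\le t<1$ the integral term vanishes, so $\kappa(t)=t/2$. For $1\le t\le2$ it equals $\int_0^{t-1}\kappa(w)\dd w=(t-1)^2/4$. This gives
\[
\kappa(t)=t-\tfrac34t^2+\tfrac14(t-1)^2=-\tfrac12t^2+\tfrac12t+\tfrac14 .
\]
This function decreases from $1/4$ at $t=1$ to $-3/4$ at $t=2$. Hence $|\kappa|\le3/4$ on $[0,3)$ once Step 3 shows that averaging does not increase the supremum. For $t>2$ we have $b=0$, so
\[
\kappa(t)=\int_{t-2}^{t-1}\kappa(w)\dd w .
\]
This is an average over a window of length one. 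By induction, $\kappa$ is continuous on $(1,\infty)$ and $|\kappa(t)|\le\sup_{[t-2,t-1]}|\kappa|\le3/4$ for all $t$.

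\emph{Step 2 (conserved quantity).} I would use $\bar G(s)=P(V>s)$ for $V\sim\operatorname{Unif}[1,2]$, which is $1$ on $[0,1]$, $2-s$ on $[1,2]$, and $0$ beyond. Define
\[
I(t)=\int_{t-2}^{t}\kappa(w)\,\bar G(t-w)\dd w .
\]
Differentiating and using the recursion, I expect $I'(t)=b(t)$ for $t\ge0$. Since $\int_0^2 b=\tfrac14+\bigl[\tfrac{t^2}{2}-\tfrac{t^3}{4}\bigr]_1^2=0$, this gives $I(t)=0$ for all $t\ge2$. So on every window $[t-2,t]$ with $t\ge2$, a positively weighted integral of $\kappa$ vanishes. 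Consequently $\kappa$ takes both signs there, or vanishes identically.

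\emph{Step 3 (contraction, the main obstacle).} Let $M_j=\sup_{[3j,3j+3)}|\kappa|$. I want $M_{j+1}\le M_j/2$. Set $P=\sup\kappa^+$ and $N=\sup\kappa^-$ over $[3j,3j+3)$. The averaging identity keeps $\kappa$ inside $[-N,P]$ afterwards, so it suffices to show that each of $P$ and $N$ is halved after three units of time. Take $t$ in the next block. Its averaging window $[t-2,t-1]$ lies in a length-two region where, by Step 2 together with continuity, $\kappa$ must be negative on a portion of controlled weighted mass. I would first try to show that $\kappa^+$ can occupy at most half of the effective mass, giving $\kappa(t)\le P/2$. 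The three-unit delay is what lets the sign-change information from the $\bar G$-weighted window propagate into a full averaging window. Since $\kappa$ is explicit, if the qualitative argument is too loose I would fall back on a direct check of the first few blocks, followed by an induction using the invariant.

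\emph{Step 4 (integral).} Using $|\kappa|\le\tfrac34 2^{-j}$ on $[3j,3j+3)$ crudely gives roughly $\tfrac94\sum_j(3j+4)^2 2^{-j}\approx300$, which is too large. I would therefore use the exact form of $\kappa$ on $[0,3]$, where $\int_0^3(1+t)^2|\kappa|$ is small and directly computable. I would also use the sharper bound $|\kappa|\le\tfrac34 2^{-\lfloor t/3\rfloor}$ with the floor evaluated pointwise rather than blockwise, together with $\int_{3j}^{3j+3}(1+t)^2\dd t$ computed exactly. Summing the resulting geometric-polynomial series numerically should bring the total below $256$.
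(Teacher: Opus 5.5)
Your Steps 1, 2 and 4 are sound in substance. Step 2 is exactly the identity the paper uses: the paper obtains $\int_0^1\kappa(t-v)\dd v+\int_1^2(2-v)\kappa(t-v)\dd v=0$ for $t\ge2$ by integrating the recursion over $[0,t]$, which is your $I(t)=0$.

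The genuine gap is Step 3: the contraction is never established.
\begin{itemize}
\item That $\kappa$ changes sign on every window is purely qualitative and cannot yield a factor $1/2$.
\item The fallback (``a direct check of the first few blocks, followed by an induction using the invariant'') names no invariant that propagates.
\item The sub-goal of halving $P$ and $N$ separately, e.g.\ $\kappa(t)\le P/2$, is not what the structure gives. The exact representation below carries a minus sign, so later positive values are controlled by earlier negative values, and vice versa.
\end{itemize}

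The missing step combines the two identities you already have. For $t\ge3$, evaluate the conserved quantity at $t-1\ge2$. Because $\bar G\equiv1$ on $[0,1]$ and $\bar G(s)=2-s$ on $[1,2]$,
\[
0=I(t-1)=\int_{t-2}^{t-1}\kappa(w)\dd w+\int_{t-3}^{t-2}\bigl(w-(t-3)\bigr)\kappa(w)\dd w ,
\]
and the first integral equals $\kappa(t)$ by the averaging recursion. Hence
\[
\kappa(t)=-\int_{t-3}^{t-2}\bigl(w-(t-3)\bigr)\kappa(w)\dd w ,
\]
where the weight is nonnegative with total mass $1/2$. This gives $|\kappa(t)|\le\frac12\sup_{[t-3,t-2]}|\kappa|$, and therefore $\sup_{t\ge3(n+1)}|\kappa|\le\frac12\sup_{t\ge3n}|\kappa|$. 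Iterating from $\sup|\kappa|\le\frac34$ gives the pointwise bound. Using tail suprema also avoids the overlap of $[t-3,t-2]$ with the current block, which your blockwise $M_j$ would have to handle.

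For Step 4, you need neither the exact form of $\kappa$ on $[0,3]$ nor a ``pointwise'' refinement, since the floor is constant on each block. The $\approx300$ comes only from bounding $\int_{3n}^{3n+3}(1+t)^2\dd t$ by $3(3n+4)^2$. Its exact value $27n^2+45n+21$ gives $\frac34\sum_{n\ge0}2^{-n}(27n^2+45n+21)=\frac{441}{2}<256$.

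Finally, $\kappa$ is not continuous on $(1,\infty)$. Since $b$ jumps from $-1$ to $0$ at $t=2$, $\kappa(2)=-\frac34$ while $\kappa(2^+)=\frac14$. This is harmless, because the corrected argument uses only integrals.
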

\begin{proof}
Direct substitution in \eqref{eq:random-kernel-definition} gives
\[
 \kappa(t)=\begin{cases}
 t/2,&0\le t<1,\\
 \frac14+\frac t2-\frac{t^2}2,&1\le t\le2.
 \end{cases}
\]
Hence $|\kappa(t)|\le3/4$ on $[0,2]$. For $t>2$, the recursion averages earlier values, so induction over successive unit intervals gives $|\kappa(t)|\le3/4$ for all $t\ge0$. Direct integration of \eqref{eq:random-kernel-data} gives $\int_0^\infty b(t)\dd t=0$. Integrating \eqref{eq:random-kernel-definition} over $[0,t]$ and interchanging the finite integrals therefore yields
\[
 \int_0^1\kappa(t-v)\dd v+\int_1^2(2-v)\kappa(t-v)\dd v=0,
 \ \forall t\ge2.
\]
Replace $t$ by $t-1$ and use the recursion again to obtain
\[
 \kappa(t)=-\int_2^3(3-v)\kappa(t-v)\dd v,\  \forall t\ge3,
\]
which implies $\sup_{t\ge3(n+1)}|\kappa(t)| \le\frac12\sup_{t\ge3n}|\kappa(t)|$, $\forall n\ge0$ since $\int_2^3(3-v)\dd v=1/2$. Iteration proves the stated decay, and summing over intervals of length three gives
\[
 \int_0^\infty(1+t)^2|\kappa(t)|\dd t
 \le\frac34\sum_{n=0}^\infty2^{-n}\int_{3n}^{3n+3}(1+t)^2\dd t
 =\frac34\sum_{n=0}^\infty2^{-n}(27n^2+45n+21)
 =\frac{441}{2}<256.
\]
\end{proof}

\end{document}